\newcommand{\half}[1][1]{\frac{#1}{2}}
\newcommand{\op}[1]{\operatorname #1}
\newcommand{\ip}[2]{\left\langle{#1},{#2}\right\rangle}
\newcommand{\I}[1]{\int_\Omega #1 \,dx dy}
\newcommand{\Ip}[2]{\int_\Omega #1\cdot#2 \,dx dy}
\newcommand{\pt}{\partial_t}
\newcommand{\pn}[1]{\partial_{#1}}
\newcommand{\px}{\partial_x}
\newcommand{\py}{\partial_y}
\newcommand{\pxi}{\partial_i}
\newcommand{\dt}{\dfrac{d}{dt}}
\newcommand{\sR}{\mathbb{R}}
\newcommand{\sL}{L^2}
\newcommand{\sH}{H^1}
\newcommand{\sSob}[1]{H^{#1}}
\newcommand{\n}[1]{\Vert#1\Vert}
\renewcommand{\a}[1]{\left\vert#1\right\vert}
\newcommand{\nl}[1]{\n{#1}_{L^2}}
\newcommand{\nL}[1]{\n{#1}_{L^4}}
\newcommand{\nh}[1]{\n{\og#1}_{\sL}}
\newcommand{\nH}[1]{\n{\ol#1}_{\sL}}
\newcommand{\ob}[2]{\left(#1\cdot\og\right)#2}
\newcommand{\og}{\nabla}
\newcommand{\ol}{\Delta}
\newcommand{\odiv}{\og\cdot}
\newcommand{\oI}{\op{I}_h}
\newcommand{\ib}[3]{\ip{\ob{#1}{#2}}{#3}}
\newcommand{\Ib}[3]{\Ip{\ob{#1}{#2}}{#3}}
\newcommand{\Ibb}[4]{\I{#1_{#2}\pn{\subxy{#2}}#3#4}}
\newcommand{\subxy}[1]{%
	\IfEqCase{#1}{%
		{1}{x}
		{2}{y}
	}[\PackageError{tree}{Undefined option to tree: #1}{}]%
}%
\newcommand{\PT}{{\scriptstyle\mathcal{P}}}
\newcommand{\ut}{\tilde{u}}
\newcommand{\vt}{\tilde{v}}
\newcommand{\wt}{\tilde{w}}
\newcommand{\cI}{c_1}
\newcommand{\cIa}{c_2}
\newcommand{\cIb}{c_3}
\newcommand{\cL}{c_L}
\newcommand{\cB}{c_B}
\newcommand{\cM}{c_M}
\newcommand{\C}{C}
\newcommand{\cT}{c_T}
\newcommand{\nua}{(\alpha-\beta)}
\newcommand{\nub}{\alpha - \beta}
\newcommand{\cupa}{(1+T\pi^2\nua)\nua}
\newcommand{\cupb}[1][10]{#1\pi^2\nua^2}
\newcommand{\cupT}{\cM \nua^2}
\newcommand{\ar}[1]{\frac{\nh{{#1}_1}}{2\pi\nl{{#1}_1}}}
\newcommand{\ars}[1]{\frac{\nh{{#1}_1}^2}{4\pi^2\nl{{#1}_1}^2}}
\newcommand{\arsT}[1]{\frac{\nH{{#1}_1}^2}{4\pi^2\nh{{#1}_1}^2}}
\newcommand{\ag}[2][1+]{\left(#1\ln#2\right)}
\newcommand{\avw}{Z}
\newcommand{\avwf}{\left(\nh{v}^2+\nh{w}^2\right)}
\newcommand{\lnTwo}[1]{\left(1+\ln\frac{\nH{#1_1}}
	{2\pi\nh{#1_1}}\right)^{1/2}}
\newcommand{\lntwo}[1]{\left(1+\ln\frac{\nH{#1_1}}
	{2\pi\nh{#1_1}}\right)}
\DeclareMathOperator*{\esssup}{ess\,sup}
\newtheorem{theorem}{Theorem}[section]
\newtheorem{proposition}[theorem]{Proposition}
\newtheorem{system}[theorem]{System}
\newtheorem{lemma}[theorem]{Lemma}
\theoremstyle{remark}
\newtheorem{remark}[theorem]{Remark}
\title[Abridged data assimilation for the 2D MHD]{Continuous data assimilation for the magnetohydrodynamic equations in 2D using one component of the velocity and magnetic fields}
	\author{Animikh Biswas$^{1}$}
\address{${}^{1,2}$ Department of Mathematics and Statistics\\
University of Maryland-Baltimore County\\ Baltimore, MD 21250, USA.}
\email[A. Biswas]{abiswas@umbc.edu}
\author{Joshua Hudson$^{2, \dagger}$}
\email[J. Hudson]{joshuahudson@umbc.edu}
\address{${}^{3,4}$ Department of Mathematics\\ University of Nebraska-Lincoln\\ Lincoln, NE 68588-0130, USA.}
\author{Adam Larios$^3$}
\email[A. Larios]{alarios@unl.edu}
\author{Yuan Pei$^4$}
\email[Y. Pei]{ypei4@unl.edu}
\subjclass{Primary 76W05, 35Q35; Secondary 76B75, 37C50, 35Q30.}
 \keywords{Data Assimilation, signal synchronization, downscaling, magnetohydrodynamics; MHD equations.}
\thanks{${}^\dagger$ Corresponding author. {\it Email:} joshuahudson@umbc.edu}
\thanks{The research of Animikh Biswas and Joshua Hudson was partially supported by NSF grant DMS 1517027.}
\begin{document}
\begin{abstract}
We propose several continuous data assimilation (downscaling) algorithms based on feedback control for the 2D magnetohydrodynamic (MHD) equations.  We show that for sufficiently large choices of the control parameter and resolution and assuming that the observed data is error-free,  the solution of the controlled system converges exponentially (in $L^2$ and $H^1$ norms) to the reference solution independently of the initial data chosen for the controlled system.  Furthermore, we show that a similar result holds when controls are placed only on the horizontal (or vertical) variables, or on a {\it single} Els\"asser variable, under more restrictive conditions on the control parameter and resolution.  Finally, using the data assimilation system, we show the existence of {\it abridged} determining modes, nodes and volume elements.
\end{abstract}
\maketitle

\section{Introduction}
In the study of solar storms, space weather forecasting, earth's geodynamo, and other areas, predicting the motion of fluids with magnetic properties is a central concern.  The governing equations are often taken to be the magnetohydrodynamic (MHD) equations, or some modification of them.  These equations are notoriously difficult to solve both analytically and computationally.  Moreover, accurately initializing the system is challenging due to the sparsity of the available data.  Fortunately, data is often given not just at a single time, but can be streaming in (e.g., from devices monitoring space plasma dynamics), or given in history (e.g., from surface geomagnetic observations, which in the earth can be traced back up to 7000 years \cite{Bloxham_Gubbins_Jackson_1989,Constable_Johnson_Lund_2000,Sabaka_Olsen_Langel_2002}).  This situation is similar to the problem of weather prediction on earth.  Therefore the techniques of data assimilation, which were developed in weather prediction, have been applied to the MHD equations in recent years (see, e.g., \cite{Bunge_Richards_Baumgardner_2002,Chandrasekar_Ridley_Bernstein_2007,Fournier_Eymin_Thierry_2007,Fournier_Hulot_Jault_Kuang_Tangborn_Gillet_Canet_Aubert_Lhuillier_2010,Groth_DeZeeuw_Gombosi_Powell_2000,Li_Jackson_Livermore_2014,Mendoza_DeMoor_Bernstein_2006,Scott_McKenzie_Longcope_2016,Sun_Tangborn_Kuang_2007,Teixeira_Ridley_Torres_Aguirre_Bernstein_2008}).  It has also been speculated in \cite{Adams_Stone_Zimmerman_Lathrop_2015} that data assimilation for magnetohydrodynamics may be useful in liquid sodium experiments modeling the Earth's core.

Data assimilation has been the subject of a very large body of work.  Classically, these techniques are based on linear quadratic estimation, also known as the Kalman Filter.  The Kalman Filter has the drawback of assuming that the underlying system and any corresponding observation models are linear.  It also assumes that measurement noise is Gaussian distributed.  This has been somewhat corrected via modifications, such as the Extended Kalman Filter and the Unscented Kalman Filter.  For more about the Kalman Filter and its modifications, see, e.g., \cite{Daley_1993_atmospheric_book,Kalnay_2003_DA_book,Law_Stuart_Zygalakis_2015_book}, and the references therein.  Recently, a promising new approach to data assimilation was pioneered by Azouani, Olson, and Titi in \cite{Azouani_Olson_Titi_2014,Azouani_Titi_2014} (see also \cite{Cao_Kevrekidis_Titi_2001,Hayden_Olson_Titi_2011,Olson_Titi_2003} for early ideas in this direction).  This new approach is based on feedback control at the PDE level.  The first works in this area assumed noise-free observations, but \cite{Bessaih_Olson_Titi_2015} adapted the method to the case of noisy data, and \cite{Foias_Mondaini_Titi_2016} adapted it to the case where measurements are obtained discretely in time and may be contaminated by systematic errors.   Computational experiments on this technique were carried out in the cases of the 2D Navier-Stokes equations \cite{Gesho_Olson_Titi_2015}, the 2D B\'enard convection equations \cite{Altaf_Titi_Knio_Zhao_Mc_Cabe_Hoteit_2015}, and the 1D Kuramoto-Sivashinsky equations \cite{Lunasin_Titi_2015,Larios_Pei_nonlinear_da_2017}.  In \cite{Larios_Pei_nonlinear_da_2017}, several nonlinear versions of this approach were proposed and studied. In addition to the results discussed here, a large amount of recent literature has built upon this idea; see, e.g., \cite{Albanez_Nussenzveig_Lopes_Titi_2016,Biswas_Martinez_2017,Farhat_Jolly_Titi_2015,Farhat_Lunasin_Titi_2016abridged,Farhat_Lunasin_Titi_2016benard,Farhat_Lunasin_Titi_2016_Charney,Farhat_Lunasin_Titi_2017_Horizontal,Foyash_Dzholli_Kravchenko_Titi_2014,GlattHoltz_Kukavica_Vicol_2014,Jolly_Martinez_Titi_2017,Jolly_Sadigov_Titi_2015,Markowich_Titi_Trabelsi_2016,Mondaini_Titi_2017}.  

In the present work, we adapt the approach of \cite{Azouani_Olson_Titi_2014,Azouani_Titi_2014,Farhat_Lunasin_Titi_2016abridged} to the 2D MHD equations.  In Theorem \ref{thm all}, we show that solutions of the feedback-controlled system converge exponentially in the $L^2$-norm to solutions of the MHD system when feedback control is applied to all variables (here, we use Els\"asser variables for simplicity).  This convergence holds under certain conditions on the spacing of the data and the weight given to the feedback control.  Moreover, in Theorems \ref{thm 1st} and \ref{thm v}, we establish {\it abridged data assimilation}, i.e., we show that feedback control need only be applied to a reduced set of the variables (horizontal variables or a single Els\"asser variable, respectively) to obtain exponential convergence, at the cost of more restrictive conditions on the data resolution $h$ and control weight $\mu$.  In Theorem \ref{thm H1 all}, we establish exponential convergence in the $H^1$-norm.  Next, in Theorem \ref{T2 thm 1}, we show that if one makes weaker assumptions on the data interpolation function, and if feedback control is applied only to horizontal variables, then exponential convergence in the $H^1$ norm holds as well. Finally, in Section \ref{sec:detinterp}, we establish a rigorous connection between data assimilation and the concept of determining quantities, first introduced in \cite{foias-prodi}, and further studied in \cite{ftdet,cjt,jt1,jt2,jt3}.

\subsection{Background on Data Assimilation}
We now describe the general idea of the data assimilation scheme we use for the 2D MHD equations, 
based on the idea of feedback control, that was developed by Azouni, Olson and Titi in \cite{Azouani_Olson_Titi_2014,Azouani_Titi_2014} in the context of the 2D Navier-Stokes equations.  In the study of a dynamical system in the form,
 $$ \dt Y = F(Y), $$
 subject to certain boundary conditions, one normally 
 tries to show that unique solutions will arise given 
 any initial value
 $$Y(0)=Y_0,$$
 in a certain space, and that the solution will change 
 in a continuous way with respect to a change in the 
 initial value.
 
 The problem arises in practice that the initial value
 may not be known exactly, but it may approximate the
 true initial value of a given observable, for example
 the temperature, which we'd like to predict the value 
 of in the future. The continuous dependence on initial
 data addresses this issue, in that if the initial
 approximation is close enough to the true value, then 
 the solution we obtain will accurately approximate the 
 true value of the observable for some period of time.
 However, usual theory shows that the length of time
 the approximation is guaranteed to be good is short,
 in that the error may grow exponentially in time.
 Also, the initial measurement may need to give a very
 close approximation to the true initial value, but
 in practice measurements may only be available on a 
 coarse grid, limiting the accuracy of the initial
 approximation and thus limiting both the accuracy the 
 solution can be guaranteed to have, as well as the 
 duration for which this accuracy can be guaranteed.
 
 Data assimilation is the method where, to compensate 
 for this lower bound on the accuracy of the 
 measured initial condition, measurements are taken 
 of the observable as time goes on 
 (over the same possibly coarse grid on which the initial 
 value is approximated) and fed back into
 the differential equation 
 (giving a different equation, called the data 
 assimilation equation) in such a way 
 that the solution will become a better approximation
 as time goes on. This gives us the accuracy we need
 to apply the continuous dependence on initial data 
 and say the prediction will be accurate for some 
 duration from that time onwards.
 
 The data assimilation algorithm (the way measurements
 are introduced to the differential equation) can take
 different forms, but the one we consider here was first
 introduced by Azouani, Olson, and Titi in \cite{Azouani_Olson_Titi_2014,Azouani_Titi_2014}. Given that the true value of the observable at time $t$ is $Y(t),$ then the data assimilation equation
 will be:
 \begin{align*}
 \dt \tilde{Y} &= F(\tilde{Y})+\mu(\oI(Y)-\oI(\tilde{Y}))\\ 
  &= F(\tilde{Y}) + \mu\oI(Y-\tilde{Y}),
 \end{align*}
 where  the second equality in the above equation follows because we'll assume the
 interpolant operator, $\oI$, is linear. Here, $\mu$ will be an adequately chosen tuning parameter. In addition,  we will assume that for all $u\in\sH$, $\oI$ satisfies 
one of the following:
 \begin{equation}\label{I1}
  \nl{u-\oI(u)}\leq\cI h\nh{u},
 \end{equation}
 or
 \begin{equation}\label{I2}
  \nl{u-\oI(u)}\leq\cIa h\nh{u} + \cIb h^2\nH{u}.
 \end{equation}
 Many relevant examples of operators satisfy one of these two conditions, including the projection onto the low modes, finite volume element operators, and nodal interpolant operators. For more information, see, e.g. \cite{Lunasin_Titi_2015,Foias_Titi_1991_Nonlinearity,Azouani_Olson_Titi_2014}.
 
\subsection{Background on the MHD equations}
 We consider the 2D MHD equations for a fluid and magnetic field under periodic boundary conditions and with zero space average. Let $u$, $b$, 
 and $p$ represent the fluid velocity, magnetic field, and 
 fluid pressure, respectively, and let the spatial domain be $[0,L]^2$.
 The system can be written as (see, e.g., \cite{Davidson_2001}): 
 \begin{align*}
 \pt u - \nu\ol{u} + \ob{u}{u} - \tfrac{1}{\rho_0\mu_0}\ob{b}{b} 	
	&= - \tfrac{1}{\rho_0}\og\left(p+\tfrac{1}{2\mu_0}\a b^2\right) + f_1,\\
 \pt b - \lambda\ol{b} + \ob{u}b - \ob{b}u 		&= g_1,\\
 \nabla\cdot b =0, \quad \nabla\cdot u &= 0.
 \end{align*}
 Here, $\nu>0$ is the kinematic fluid viscosity, $\rho_0$ is the fluid density, $\mu_0:=4\pi\times10^{-7} H/m$ is the permeability of free space, $\lambda=(\mu_0\sigma)^{-1}>0$ is the magnetic diffusivity,  and $\sigma$ is the electrical conductivity of the fluid.  We impose initial conditions $u(0,x,y) = u_0(x,y)$ and  $b(0,x,y) = b_0(x,y)$ in an appropriate function space, and allow for time-dependent forcing functions, denoted above by $f_1$ and $g_1$.

 Our analyses will have to take into account the amount of energy being added to the system by the forcing functions, so to this end we define the Grashof number, $G$, to be
 $$G:=\tfrac{8}{\lambda_1}\max\{\tfrac{1}{\nu^2},\tfrac{1}{\lambda^2}\}
 \limsup_{t\to\infty}\left(\max\left\{
	 \|f_1(t)\|_{L^2([0,L])},\quad
	\tfrac{1}{\sqrt{\rho_0\mu_0}}\|g_1(t)\|_{L^2([0,L])}
 \right\}\right).$$
where $\lambda_1:=\frac{4\pi^2}{L^2}$ is the smallest eigenvalue of the Stokes operator on the space of functions with space average zero on $[0,L]^2$ under periodic boundary conditions \cite{Foias_Manley_Rosa_Temam_2001}.

  Note that we have constructed $G$ to be dimensionless. We will also non-dimensionalize the system so that we can later reformulate it in terms of the Els\"asser variables. Let $U$ be a reference velocity and use $L$ as a reference length. We denote the dimensionless fluid Reynolds number and the dimensionless magnetic Reynolds number by $Re:=UL/\nu$ and $Rm:=UL/\lambda$, respectively.  
 In non-dimensional form, the system can be written as:
 \begin{subequations}\label{MHD}
 \begin{align}
 \pt u - \tfrac{1}{Re}\ol{u} + \ob{u}{u} - \ob{b}{b} 	
	&= - \og\PT + f_1,\label{sys MHD u}\\
 \pt b - \tfrac{1}{Rm}\ol{b} + \ob{u}b - \ob{b}u 		&= g_1,\label{sys MHD b}\\
 \nabla\cdot b =0, \quad  \nabla\cdot u &= 0.
 \end{align}
 \end{subequations}
with the initial conditions $u(0,x,y) = u_0(x,y)$ and $b(0,x,y) = b_0(x,y)$, and where $\PT$ is the (non-dimensionalized) sum of the fluid and magnetic pressures, and $u$, $b$, $u_0$, $b_0$, $f_1$, and $g_1$ have been replaced by their appropriate non-dimensional versions.  Note the bilinearity  in $(u,b)$ on the left-hand side of \eqref{sys MHD b} allows for the important fact that the four non-linear terms in \eqref{MHD} can be written with coefficients $\pm1$. 
 We will denote the non-dimensionalized spatial domain by 
 $$\Omega:=[0,1]^2\subset \mathbb{R}^2.$$

 Global existence and uniqueness of solutions to \eqref{MHD vw} was proven in \cite{Duvaut_Lions_1972} and \cite{Sermange_Temam1983}. For a derivation and physical discussion of the MHD equations, see, e.g., \cite{Chandrasekhar_1961}. For an overview of the classical and recent mathematical results pertaining to the MHD equations, see, e.g., \cite{Duvaut_Lions_1972,Davidson_2001}.


\section{preliminaries}

\noindent
In this section, we briefly lay out some notation, discuss some of the standard results and inequalities we use, and give the specific equations we will discuss.
~\\
\paragraph{\bf Notation}~\\
For a matrix $A$, we denote $\a A^2 := \sum_{i,j}|A_{i,j}|^2$.  
We denote the standard $L^2$ inner-product and norm by $\ip u v := \Ip u v$ and 
$\nl u := \left(\I{\a{u}^2}\right)^\frac{1}{2}$, respectively (note that the integral is taken over the non-dimensionalized domain, $\Omega$, so $\nl{u}$ has the same units as $u$).  We also denote
$\n{u}_{H^1}:=\nh{u}$, which is equivalent to the standard $H^1$ norm, due to the Poincar\'e inequality \eqref{Poincare}.
~\\
\paragraph{\bf Inequalities}~\\
We recall some standard inequalities.  Here $\epsilon > 0$, $a,b \geq 0$, and $u,v,$ and $w$ are divergence-free periodic functions,
with sufficient regularity to make all the norms involved finite.

We will frequently use the following forms of Young's inequality and H\"older's inequality:
\begin{align}
 \label{Young}
  ab&\leq \frac{\epsilon}{2} a^2 + \frac{1}{2\epsilon} b^2\\
 \label{Holder}
  \a{\I {u v w}} &\leq \nl{u} \nL{v} \nL{w} 
\end{align}

We also recall the following version of Poincar\'e's inequality, valid for periodic functions with zero space average on $\Omega$:
\begin{align}
 \label{Poincare}
  \nh{u}\geq 2\pi\nl{u} 
\end{align}

The following inequality due to Ladyzhenskaya will be used to 
bound the nonlinear terms for the cases where we have measurements
on all the components and when we only measure one Els\"asser variable:
\begin{align}
 \label{Lady}
  \nL{u}^2\leq \cL \nl{u} \nh{u} 
\end{align}

The next two inequalities are extensions of the Brezis-Gallouet and are
due to Titi \cite{Titi1987}. They will be necessary to bound the nonlinear 
terms in the case of measuring only one component of the reference 
velocity and magnetic fields:
\begin{align}
 \label{Brezis}
  \a{\I {u \pxi v w}} \leq \cB \nh{u}\nh{v}\nl{w}
  \left(1 + \ln\left(\frac{\nh{w}}{2\pi
	\nl{w}}\right)\right)^{1/2}, \\
 \label{Titi-Brezis}
    \a{\I {u \pxi v \ol w}} \leq \cT \nh{u}\nh{v}\nH{w}
    \left(1 + \ln\left(\frac{\nH{z}}{2\pi
		\nh{z}}\right)\right)^{1/2},
\end{align}
where in \eqref{Titi-Brezis}, $z$ can be $u$ or $v$.

The following generalization of the Gr\"onwall Lemma will be useful, which was first shown by Foias et al. in \cite{Foias_Manley_Temam_Treve_1983}. For a proof of an even more general version due to Jones and Titi, see \cite{Foias_Manley_Rosa_Temam_2001}.

\begin{proposition}[Generalized Gronwall Inequality]\label{gronwall}
Let $\psi:[0,\infty)\to\sR$ be a locally integrable function such that for some 
	$T>0$ the following two conditions hold:
 \begin{subequations}
 \begin{equation}\label{gronwall inf}
 \liminf_{t\to\infty}\frac{1}{T}\int_{t}^{t+T}\psi(s)ds>0,
 \end{equation} 
 \begin{equation}\label{gronwall sup}
 \limsup_{t\to\infty}\frac{1}{T}\int_{t}^{t+T}\psi^-(s)ds<\infty,
 \end{equation}
where $\psi^-(t):=\max\{0,-\psi(t)\}.$ Then if $Y:[0,\infty)\to[0,\infty)$ is 
	absolutely continuous and for almost all $t,$
 \begin{equation}\label{gronwall dif}
  \dt Y + \psi Y \leq \phi,
 \end{equation}
 \end{subequations}
	where $\phi(t) \to 0$ as $t\to\infty$, then $Y(t)\to 0$ as well. 
Furthermore, if $\phi\equiv 0$ then $Y(t)\to 0$ exponentially as $t\to\infty.$
\end{proposition}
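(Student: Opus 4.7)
The plan is to apply the integrating factor technique to the differential inequality \eqref{gronwall dif}. Multiplying by $\exp\left(\int_0^t \psi(\sigma)\,d\sigma\right)$ and integrating from $s$ to $t$, which is justified since $Y$ is absolutely continuous and $\psi$ is locally integrable, yields
\begin{equation*}
  Y(t) \le Y(s) \exp\left(-\int_s^t \psi(\sigma)\,d\sigma\right) + \int_s^t \phi(\tau) \exp\left(-\int_\tau^t \psi(\sigma)\,d\sigma\right) d\tau
\end{equation*}
for every $0 \le s \le t$. The problem therefore reduces to showing that the exponential weight $\exp\left(-\int_\tau^t \psi\right)$ decays like $e^{-\delta(t-\tau)}$ up to a bounded multiplicative constant, once $\tau$ is large enough.

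To obtain this key estimate, I would fix $0 < \delta < \liminf_{t\to\infty} \frac{1}{T}\int_t^{t+T}\psi(s)\,ds$ and $K > \limsup_{t\to\infty}\frac{1}{T}\int_t^{t+T}\psi^-(s)\,ds$, and then pick $t^*$ so large that $\int_t^{t+T}\psi \ge \delta T$ and $\int_t^{t+T}\psi^- \le K T$ whenever $t \ge t^*$. Given $\tau \ge t^*$ and $t \ge \tau$, write $t - \tau = nT + r$ with $n \in \{0,1,2,\dots\}$ and $0 \le r < T$. The $n$ consecutive length-$T$ blocks contribute at least $n\delta T$ to $\int_\tau^t\psi$, while the leftover piece on $[\tau+nT,t]$ is bounded below by $-\int_{\tau+nT}^{\tau+(n+1)T}\psi^- \ge -KT$. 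Adding these,
\begin{equation*}
  \int_\tau^t \psi(\sigma)\,d\sigma \ge \delta(t-\tau) - (\delta+K)T, \qquad \text{hence} \qquad \exp\left(-\int_\tau^t \psi(\sigma)\,d\sigma\right) \le e^{(\delta+K)T}\, e^{-\delta(t-\tau)}.
\end{equation*}

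With this inequality in hand, the first term $Y(s) e^{-\int_s^t \psi}$ in the integrating-factor identity decays at exponential rate $\delta$ as $t\to\infty$, provided $s \ge t^*$ is fixed. For the forcing term, given $\varepsilon > 0$ I would pick $M \ge s$ with $|\phi(\tau)| < \varepsilon$ for all $\tau \ge M$ and split $\int_s^t = \int_s^M + \int_M^t$. The first piece is dominated by $e^{(\delta+K)T} e^{-\delta(t-M)}\int_s^M|\phi(\tau)|\,d\tau$, which vanishes as $t\to\infty$ because $\phi$ is locally integrable on the compact set $[s,M]$. The second piece is dominated by $\varepsilon \,e^{(\delta+K)T}/\delta$. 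Letting first $t\to\infty$ and then $\varepsilon \to 0^+$ yields $Y(t)\to 0$. When $\phi\equiv 0$ only the first term is present, and the convergence is plainly exponential at rate $\delta$.

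The main obstacle is the block-decomposition estimate in the second paragraph: neither \eqref{gronwall inf} nor \eqref{gronwall sup} alone controls $\int_\tau^t \psi$ over arbitrary intervals. Condition \eqref{gronwall inf} must be used to gain linearly on the full blocks of length $T$, while \eqref{gronwall sup} is used precisely to prevent the partial remainder block of length $r < T$ from erasing that gain. Once that mechanism is in place, everything else is a routine splitting argument.
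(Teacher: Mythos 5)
Your proof is correct, and it is essentially the standard argument (integrating factor plus the block decomposition of $[\tau,t]$ into $n$ full windows of length $T$ handled by \eqref{gronwall inf} and one partial window handled by \eqref{gronwall sup}) found in the references the paper cites; the paper itself gives no proof of Proposition~\ref{gronwall}, deferring to Foias--Manley--Temam--Treve and the Jones--Titi version in \cite{Foias_Manley_Rosa_Temam_2001}. The one step worth stating explicitly if you write this up is that $Y(t)e^{\int_0^t\psi}$ is absolutely continuous on compact intervals (product of two locally absolutely continuous functions, using local integrability of $\psi$), which is what licenses integrating the pointwise a.e.\ inequality from $s$ to $t$; everything else, including the bound $\int_\tau^t\psi\geq\delta(t-\tau)-(\delta+K)T$ and the $\int_s^M+\int_M^t$ split of the forcing term, is exactly right.
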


Next, in order to simplify our calculations we will reformulate the MHD equations
in terms of new variables which we call $v$ and $w$, in such
a way as to symmetrize the system.

We assume, without loss of generality, that 
$\tfrac{1}{Re} \geq \tfrac{1}{Rm}$, and denote the Els\"asser variables \cite{Elsasser_1950} by $v=u+b$ and $w=u-b$ 
(if $\tfrac{1}{Re} < \tfrac{1}{Rm}$ then we would denote $w=b-u$ and proceed similarly).

Then we can derive evolution equations for $v$ and $w$ by considering 
both the sum and difference of \eqref{sys MHD u} and \eqref{sys MHD b}
and obtain the following system: 
\begin{system}
\begin{subequations}\label{MHD vw}
\begin{align}
\label{MHD_v}
 \pt v - \alpha \ol v - \beta \ol w + \ob w v &= -\og \PT + f,\\
 \label{MHD_w}
 \pt w - \alpha \ol w - \beta \ol v + \ob v w &= -\og \PT + g,\\
 \label{MHD_vw_div}
 \odiv v = 0, \quad \odiv w &= 0,
 \end{align}
\end{subequations}
subject to the initial conditions $v(0)=v_0:=u_0+b_0$ and $w(0)=w_0:=u_0-b_0$. 
\end{system}
Here we relabeled the forcing terms as
$f:=f_1+g_1$ and $g:=f_1-g_1$, and we denote $\alpha:=\half(\tfrac{1}{Re}+\tfrac{1}{Rm})$ and 
$\beta:=\half(\tfrac{1}{Re}-\tfrac{1}{Rm})$.
It will be important to note that $\alpha-\beta=\tfrac{1}{Rm}>0$ and that
$\alpha>0$ and $\beta\geq 0$ (this last inequality is true by 
the assumption that $\tfrac{1}{Re}\geq \tfrac{1}{Rm}$, however if
$\tfrac{1}{Re}<\tfrac{1}{Rm}$ then we would arrive at the above system
except with a different sign on the pressure, and $\beta=\tfrac{1}{Rm}-\tfrac{1}{Re}$, so still we have
$\beta\geq 0$, and in general we will have $\alpha-\beta = \min\{\tfrac{1}{Re},\tfrac{1}{Rm}\}$).

We note here that $G$ can be expressed in terms of the forcing functions for the reformulated system:
$$G=\tfrac{\max\{Re^2,Rm^2\}}{\pi^2}\limsup_{t\to\infty}
	\left(\max\lbrace\nl{f(t)+g(t)},\nl{f(t)-g(t)}\rbrace\right),$$
hence,
$$G\geq\tfrac{1}{\pi^2(\alpha-\beta)^2}\limsup_{t\to\infty}\left(\max\lbrace\nl{f(t)}, \nl{g(t)}\rbrace\right),$$

Now, we describe the data assimilation algorithms studied in this paper. 
Following the ideas of \cite{Azouani_Olson_Titi_2014,Azouani_Titi_2014} we incorporate
measurements obtained from a fixed reference solution 
(of which we want to predict future values) through a damping term.
This will ``steer'' the data assimilation solutions to the reference
solution exponentially in time. In what sense we will have convergence
depends on the type of interpolant $\oI$ with which we take measurements.

The results are separated by the type of interpolant considered and 
by which measurements are recorded. We 
frame our results in terms of the Els\"asser variables, not in
terms of $u$ and $b$. Also, we consider algorithms which
require measurements taken only on the first components, $u_1$ and $b_1$
(which is the same as measuring $v_1$ and $w_1$), by measuring all 
the components of $u$ and $b$, or by measuring either the sum $u+b$
or the difference $u-b$ only. 


In the following, let $(v,w)$ be a fixed solution of \eqref{MHD vw}, and
we denote the data assimilation variables by $\vt$ and $\wt$, which
will approximate $v$ and $w$ respectively. $\oI$ may satisfy either
\eqref{I1} or \eqref{I2}, and we will analyze each case separately.

First, we have the following algorithm which utilizes measurements
taken on all components (so measuring $u$ and $b$):
\begin{system}
\begin{subequations}\label{MHD data all}
\begin{align}
 \pt \vt - \alpha \ol \vt - \beta \ol \wt + \ob \wt \vt &= 
	-\og \tilde{\PT} + f + \mu \oI(v-\vt)\\
 \pt \wt - \alpha \ol \wt - \beta \ol \vt + \ob \vt \wt &= 
	-\og \tilde{\PT} + g + \mu \oI(w-\wt)\\
 \odiv \vt = 0, \quad \odiv \wt &= 0,
\end{align}
\end{subequations}
subject to the initial conditions $\vt(0)\equiv \wt(0)\equiv 0$.
\end{system}

Next, using measurements only on the first components of $v$ and $w$ (which is equivalent to measuring $u_1$ and $b_1$):
\begin{system}\label{sys data 1}
\begin{subequations}\label{MHD data 1 comp}
\begin{align}
 \pt \vt - \alpha \ol \vt - \beta \ol \wt + \ob \wt \vt &= 
	-\og \tilde{\PT} + f + \mu \oI(v_1-\vt_1)e_1\\
 \pt \wt - \alpha \ol \wt - \beta \ol \vt + \ob \vt \wt &= 
	-\og \tilde{\PT} + g + \mu \oI(w_1-\wt_1)e_1\\
 \odiv \vt = 0, \quad \odiv \wt &= 0,
\end{align}
\end{subequations}
subject to the initial conditions $\vt(0)\equiv \wt(0)\equiv 0$.
\end{system}

Finally, only taking measurements on $v$ (which would in practice still require recording measurements on both $u$ and $b$):
\begin{system}\label{sys data v}
\begin{subequations}\label{MHD data v}
\begin{align}
 \pt \vt - \alpha \ol \vt - \beta \ol \wt + \ob \wt \vt &= 
	-\og \tilde{\PT} + f + \mu \oI(v-\vt)\\
 \pt \wt - \alpha \ol \wt - \beta \ol \vt + \ob \vt \wt &= -\og \tilde{\PT} + g\\
 \odiv \vt = 0, \quad \odiv \wt &= 0,
\end{align}
\end{subequations}
subject to the initial conditions $\vt(0)\equiv \wt(0)\equiv 0$.
\end{system}

\begin{remark}
Although we chose to consider taking measurements on the first components of $v$ and $w$ in System~\ref{sys data 1}, we could instead use the second components with no substantial differences. Likewise, in System~\ref{sys data v} we could also consider taking measurements on $w$ and we would obtain similar results.
\end{remark}

\begin{remark}\label{remark_init_data}
In the above we chose to make the initial conditions $0$, but in fact the initial conditions may be chosen essentially arbitrarily, albeit in accordance with the existence theorems. Theorem~\ref{T2 thm 1} additionally requires that the initial conditions satisfy an upper bound of the form \eqref{sup bound}. 
\end{remark}

\begin{remark}
Here we first constructed the Els\"asser variables from the original variables $u$ and $b$ after nondimensionalizing, and then proceeded to define the various data assimilation algorithms and variables. However, since the transformations were linear, if we were to define each data assimilation algorithm using the original variables, in the process defining data assimilation variables $\ut$ and $\tilde{b}$, and then nondimensionalize and change to the Els\"asser variables, we would arrive at the same systems above. So, all our results apply to the corresponding algorithms formulated in terms of the original variables.\\
Note also that although the results are framed in terms of the Els\"asser variables, by the triangle inequality convergence of $\tilde{v}$ to $v$ and $\tilde{w}$ to $w$ implies convergence of $\tilde{u}$ and $\tilde{b}$ to $u$ and $b$ respectively.
\end{remark}

We define weak solutions for all the systems mentioned in the distributional sense in the usual way. See \cite{Sermange_Temam1983} for a precise definition in the case of \eqref{MHD} (the other systems are similar). In addition to being a weak solution, we say $(v,w)$ (or $(\vt,\wt)$) is a global strong solution of \eqref{MHD vw} (or \eqref{MHD data all}, \eqref{MHD data 1 comp}, or \eqref{MHD data v}) if 
$$ v,w\in\sL(0,T;H^2)\cap L^\infty(0,T;\sH),\quad \forall T>0.$$
In \cite{Sermange_Temam1983}, it was shown that if $\esssup_{[0,\infty)} \nl{f_1} < \infty$ and $u_0,b_0 \in\sH$, then there exists a unique global strong solution to \eqref{MHD} (which can be transformed to a solution of \eqref{MHD vw}). Therefore, we will be assuming that, in addition to being space periodic and divergence free,
\begin{equation*}
	\esssup_{[0,\infty)} \max\{\nl{f},\nl{g}\} < \infty \quad\text{and}\quad
		\nh{u_0},\nh{b_0}<\infty. \hfill
\end{equation*}
The proofs of the corresponding existence and uniqueness results for Systems 2.3-2.5
are similar, and are omitted. We only state and prove the corresponding convergence results.

\section{Statements of the Results}

\subsection{Results for Type 1 Interpolants}
\begin{theorem}\label{thm all}
 Let $(v,w)$ be a strong solution of \eqref{MHD vw} which at time $t=0$ has evolved enough so that Proposition~\ref{prop upper bounds} holds with $t_0=0$. Let $\oI$ satisfy \eqref{I1}, where
\begin{align*}
h \leq\cI^{-1}(\alpha-\beta)^{\half}\mu^{-\half}, \text{ and }\quad
     \mu > \tfrac{\pi^2(\cL^4+(\alpha-\beta)^4)}{\alpha-\beta}G^{2}
\end{align*} 
(so $h\sim G^{-1}$).
 Then there is a unique strong solution, $(\vt,\wt)$, of \eqref{MHD data all} corresponding to $(v,w)$ which exists globally in time, and furthermore $\nl{v(t)-\vt(t)} + \nl{w(t)-\wt(t)} \to 0$ exponentially as $t\to\infty$.
\end{theorem}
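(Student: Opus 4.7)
The plan is a standard energy/Gronwall argument adapted to the Els\"asser formulation, analyzing the two error equations jointly so that the effective diffusion coefficient $\nua>0$ survives once the cross-diffusion is peeled off. Set $V:=v-\vt$ and $W:=w-\wt$; subtracting \eqref{MHD data all} from \eqref{MHD vw} and using the bilinearity identity $\ob{w}{v}-\ob{\wt}{\vt}=\ob{W}{v}+\ob{\wt}{V}$ yields
\begin{align*}
 \pt V - \alpha\ol V - \beta\ol W + \ob{W}{v} + \ob{\wt}{V} &= -\og q_1 - \mu\oI(V),\\
 \pt W - \alpha\ol W - \beta\ol V + \ob{V}{w} + \ob{\vt}{W} &= -\og q_2 - \mu\oI(W),
\end{align*}
with $\odiv V=\odiv W=0$, where $q_j$ denotes the pressure differences. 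Existence and uniqueness of $(\vt,\wt)$ follow from a parallel Galerkin/energy argument in the spirit of \cite{Sermange_Temam1983} (omitted by the paper's convention); the heart of the proof is the exponential decay of $\nl{V}^2+\nl{W}^2$.

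Next, I take the $\sL$ inner product of these equations with $V$ and $W$ respectively, discard the pressure gradients and the transport-against-self terms $\ib{\wt}{V}{V}=\ib{\vt}{W}{W}=0$, and sum. The cross diffusion satisfies $2\beta\ip{\og V}{\og W}\geq -\beta(\nh{V}^2+\nh{W}^2)$, consolidating the dissipation into $\nua(\nh{V}^2+\nh{W}^2)$, and I rewrite the feedback term as $-\mu\ip{\oI V}{V}=-\mu\nl{V}^2+\mu\ip{V-\oI V}{V}$. This produces
\begin{equation*}
 \half\dt\bigl(\nl{V}^2+\nl{W}^2\bigr)+\nua\bigl(\nh{V}^2+\nh{W}^2\bigr)+\mu\bigl(\nl{V}^2+\nl{W}^2\bigr)\leq |\mathcal{I}|+|\mathcal{N}|,
\end{equation*}
where $\mathcal{I}$ is the interpolation residual and $\mathcal{N}=-\ib{W}{v}{V}-\ib{V}{w}{W}$. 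For $\mathcal{I}$ I use \eqref{I1} and Young with weight $\nua/2$ to obtain a bound of the form $\tfrac{\nua}{2}(\nh{V}^2+\nh{W}^2)+\tfrac{\mu^2\cI^2 h^2}{\nua}(\nl{V}^2+\nl{W}^2)$; the hypothesis $h\leq \cI^{-1}\nua^{1/2}\mu^{-1/2}$ is precisely what renders the last coefficient at most $\mu/2$, so the feedback damping is not entirely consumed.

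For $\mathcal{N}$, Ladyzhenskaya \eqref{Lady} combined with H\"older gives
\[
 |\ib{W}{v}{V}|\leq \nL{V}\nL{W}\nh{v}\leq \cL\,\nl{V}^{1/2}\nh{V}^{1/2}\nl{W}^{1/2}\nh{W}^{1/2}\nh{v},
\]
and a two-stage Young splits off the $\nh{V},\nh{W}$ factors, absorbing them into the remaining $\tfrac{\nua}{2}(\nh{V}^2+\nh{W}^2)$ of dissipation and leaving a term of order $\tfrac{\cL^2}{\nua}(\nl{V}^2+\nl{W}^2)\nh{v}^2$, with the analogous estimate for the $w$-term contributing $\nh{w}^2$. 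Collecting everything produces
\[
 \dt\bigl(\nl{V}^2+\nl{W}^2\bigr)+\psi(t)\bigl(\nl{V}^2+\nl{W}^2\bigr)\leq 0,\quad \psi(t):=\mu-K\bigl(\nh{v(t)}^2+\nh{w(t)}^2\bigr),
\]
with $K$ a constant depending polynomially on $\cL$ and $\nua^{-1}$ together with the uniform-in-time $\sL$ bound on the reference solution provided by Proposition~\ref{prop upper bounds}.

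To close, I invoke the generalized Gr\"onwall Proposition~\ref{gronwall} with $\phi\equiv 0$. Proposition~\ref{prop upper bounds} (which the hypothesis ensures holds from $t_0=0$) supplies the time-averaged bound $\limsup_{t\to\infty}\tfrac{1}{T}\int_t^{t+T}(\nh{v}^2+\nh{w}^2)\,ds\lesssim G^2/\nua$ standard for strong solutions of \eqref{MHD vw}, and the hypothesis $\mu>\tfrac{\pi^2(\cL^4+\nua^4)}{\nua}G^2$ is calibrated precisely so that $\liminf_{t\to\infty}\tfrac{1}{T}\int_t^{t+T}\psi\,ds>0$ while $\int_t^{t+T}\psi^-$ stays uniformly bounded. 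Proposition~\ref{gronwall} then delivers the desired exponential decay of $\nl{V}^2+\nl{W}^2$. The main obstacle is bookkeeping: the absorption weights in the feedback estimate, the cross-diffusion peel, and the two-stage Young splits in the nonlinear bound must be chosen simultaneously so that (i) the $\nh{V}^2+\nh{W}^2$ contributions sum to strictly less than $\nua$, (ii) the interpolation residual removes only half of the feedback damping, and (iii) the resulting explicit threshold on $\mu$ after time-averaging matches the stated $\tfrac{\pi^2(\cL^4+\nua^4)}{\nua}G^2$.
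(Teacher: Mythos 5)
Your proposal is correct and follows essentially the same route as the paper: the same error equations via the identity $\ob{w}{v}-\ob{\wt}{\vt}=\ob{W}{v}+\ob{\wt}{V}$, the same cross-diffusion peel and splitting of the feedback term using \eqref{I1}, the same Ladyzhenskaya-plus-Young treatment of the nonlinear terms (the paper packages this as Lemma~\ref{lemma nonlinear term est}), and the same conclusion via Proposition~\ref{gronwall} with the time-averaged bound \eqref{int bound}. Your particular Young splitting yields a coefficient of order $\cL^2/\nua$ in front of $\nh{v}^2+\nh{w}^2$ rather than the paper's $(\cL^4+\nua^4)/(2\nua^3)$, but by the AM--GM inequality the former is dominated by the latter, so the stated threshold on $\mu$ still suffices (note only that the time average of $\nh{v}^2+\nh{w}^2$ scales like $\nua^2G^2$, not $G^2/\nua$ as written).
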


\begin{theorem}\label{thm 1st}
 Let $(v,w)$ be a strong solution of \eqref{MHD vw} which at time $t=0$ has evolved enough so that Proposition~\ref{prop upper bounds} holds with $t_0=0$. Let $\oI$ satisfy \eqref{I1}, where
 \begin{align*}
h \leq\cI^{-1}(\alpha-\beta)^{\half}\mu^{-\half}, 
\text{ and }\quad
\mu > 32\pi^2c^2\nua\left(\tilde{c}+2\ln G + \C G^4\right)G^2
 \end{align*} 
 (so $h \sim G^{-3}$).
  Then there is a unique strong solution, $(\vt,\wt)$, of \eqref{MHD data 1 comp} corresponding to $(v,w)$ which exists globally in time, and furthermore $\nl{v(t)-\vt(t)} + \nl{w(t)-\wt(t)} \to 0$ exponentially as $t\to\infty$.
\end{theorem}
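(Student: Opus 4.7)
I set $\eta:=v-\vt$ and $\xi:=w-\wt$, and derive an $\sL$ energy inequality that I will close with Proposition~\ref{gronwall}. Subtracting \eqref{MHD data 1 comp} from \eqref{MHD vw} and using $\ob{w}{v}-\ob{\wt}{\vt}=\ob{w}{\eta}+\ob{\xi}{\vt}$ (and its analogue in the second equation), I test against $\eta$ and $\xi$ in $\sL$: the pressures and the self-transport terms $\ip{\ob{w}{\eta}}{\eta}=\ip{\ob{v}{\xi}}{\xi}=0$ drop out by divergence-freeness. Summing and absorbing the cross-diffusion via $2\beta|\ip{\og\eta}{\og\xi}|\le\beta(\nh{\eta}^2+\nh{\xi}^2)$ leaves effective dissipation at rate $\nua$. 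For the feedback I write $-\mu\ip{\oI(\eta_1)}{\eta_1}=-\mu\nl{\eta_1}^2+\mu\ip{\eta_1-\oI(\eta_1)}{\eta_1}$, apply \eqref{I1} and Young, and invoke $h\le\cI^{-1}\nua^{1/2}\mu^{-1/2}$ to absorb $\tfrac{\nua}{2}\nh{\eta_1}^2$ into the dissipation; the analogous bound for $\xi_1$ gives, with $Y:=\nl{\eta}^2+\nl{\xi}^2$,
\[
\tfrac12\dt Y + \tfrac{\nua}{2}(\nh{\eta}^2+\nh{\xi}^2) + \tfrac{\mu}{2}(\nl{\eta_1}^2+\nl{\xi_1}^2) \le |\ip{\ob{\xi}{\vt}}{\eta}| + |\ip{\ob{\eta}{\wt}}{\xi}|.
\]

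\noindent\textbf{Recovering full $\sL$ control from first-component damping.} The damping captures only $\nl{\eta_1}^2+\nl{\xi_1}^2$, so a structural bridge to $Y$ is required. Because $\eta$ is divergence-free and $\eta_2$ has spatial mean zero, \eqref{Poincare} applied to $\eta_2$ yields $\nl{\eta_2}^2\le(4\pi^2)^{-1}\nh{\eta_2}^2\le(4\pi^2)^{-1}\nh{\eta}^2$, whence
\[
\nl{\eta}^2 \le \nl{\eta_1}^2 + (4\pi^2)^{-1}\nh{\eta}^2,
\]
and analogously for $\xi$; this is the key inequality coupling the first-component damping to the full $\sL$ norm via the diffusion. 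For the nonlinear terms I apply \eqref{Brezis}: with $u=\xi_j$, $v=\vt_i$, $w=\eta_i$,
\[
|\ip{\ob{\xi}{\vt}}{\eta}| \le \cB\nh{\xi}\nh{\vt}\nl{\eta}\bigl(1+\ln(\nh{\eta}/(2\pi\nl{\eta}))\bigr)^{1/2},
\]
and symmetrically for the $\wt$-term. Young's inequality peels off a small multiple of $\nh{\xi}^2$ (absorbed by diffusion), leaving a residual $\lesssim \nua^{-1}\nh{\vt}^2\nl{\eta}^2(1+\ln(\cdot))$; substituting the Poincar\'e-type bound for $\nl{\eta}^2$ splits this into a piece absorbed by $\tfrac{\mu}{2}\nl{\eta_1}^2$ (which is what forces the stated lower bound on $\mu$) and a piece absorbed by the remaining diffusion.

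\noindent\textbf{Gronwall closure and main obstacle.} Combining the above yields a differential inequality of the form $\dt Y + \psi(t)Y\le 0$, where $\psi(t)$ is a positive combination of $\mu$ and $\nua$ minus time-dependent corrections proportional to $(\nh{\vt}^2+\nh{\wt}^2)(1+\ln(\cdot))$. Uniform $\sSob{1}$ and $\sSob{2}$ bounds on $v, \vt, w, \wt$ supplied by Proposition~\ref{prop upper bounds}, combined with the enstrophy-type average $\limsup_{t\to\infty}T^{-1}\int_t^{t+T}(\nh{\vt}^2+\nh{\wt}^2)\,ds \lesssim \nua^2 G^2$, allow me to verify hypotheses \eqref{gronwall inf}--\eqref{gronwall sup} once $\mu>32\pi^2c^2\nua(\tilde c+2\ln G+\C G^4)G^2$, and Proposition~\ref{gronwall} then delivers exponential decay of $Y$ to zero. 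The principal obstacle is controlling the logarithmic factor $(1+\ln(\nh{\eta}/(2\pi\nl{\eta})))^{1/2}$, which depends on the unknown error. Since $\nl{\eta}$ admits no uniform lower bound, I tame it through the 2D interpolation $\nh{\eta}^2\le\nl{\eta}\nH{\eta}$ together with the uniform $\sSob{2}$ bounds on $v, \vt, w, \wt$, which reduce the log to a ratio of higher-order norms of the \emph{known} solutions; this accounting is exactly what produces the $\C G^4$ term in the threshold on $\mu$ and is the most delicate part of the argument. Global existence and uniqueness of the strong solution $(\vt,\wt)$ follow from a standard Faedo--Galerkin argument as in \cite{Sermange_Temam1983}.
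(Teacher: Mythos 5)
Your skeleton (energy identity, Brezis--Gallouet, generalized Gronwall) matches the paper's, but two of your steps fail as written. The first issue is your decomposition $\ob{w}{v}-\ob{\wt}{\vt}=\ob{w}{\eta}+\ob{\xi}{\vt}$: after testing with $\eta$ the surviving nonlinear term is $\ip{\ob{\xi}{\vt}}{\eta}$, whose estimate requires uniform control of $\nh{\vt}$ and $\nh{\wt}$. Proposition~\ref{prop upper bounds} bounds only the reference solution $(v,w)$, not the data assimilation solution, and such bounds on $(\vt,\wt)$ are not free --- in the Type 2 case the paper must run a separate bootstrap to obtain exactly this kind of estimate. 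The paper instead writes $\ob{w}{v}-\ob{\wt}{\vt}=\ob{\zeta}{v}+\ob{\wt}{\eta}$, so that the term that vanishes is $\ib{\wt}{\eta}{\eta}$ and the term that survives is $\ib{\zeta}{v}{\eta}$, involving only $\nabla v$ of the reference solution. This is repairable, but it is not a cosmetic choice.

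The second issue is fatal to your argument as proposed. Your bridge $\nl{\eta}^2\le\nl{\eta_1}^2+(4\pi^2)^{-1}\nh{\eta}^2$ turns the post-Young residual $\sim\nua^{-1}Z(t)\left(1+\ln(\cdot)\right)\nl{\eta}^2$ into, among other things, a term $\sim(4\pi^2\nua)^{-1}Z(t)\left(1+\ln(\cdot)\right)\nh{\eta}^2$, which you claim is ``absorbed by the remaining diffusion.'' The available diffusion is $\tfrac{\nua}{2}\nh{\eta}^2$ with a fixed coefficient, whereas $Z(t)$ is of size $\nua^2G^2e^{\C G^4}$ by \eqref{sup bound}; absorption would require $Z\lesssim\nua^2$, which fails for any nontrivial $G$. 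Enlarging $\mu$ cannot help, since $\mu$ damps only $\nl{\eta_1}^2$, and the generalized Gronwall lemma cannot rescue a negative coefficient multiplying $\nh{\eta}^2$ (there is no reverse Poincar\'e inequality). This is precisely why the paper proves Lemma~\ref{lemma nonlinear term est}(b): by integrating by parts and using $\partial_1 u_1=-\partial_2 u_2$ in the $(i,j)=(2,2)$ piece of the trilinear form, every undamped lower-order factor in \eqref{nonlin est 1} is $\nl{u_1}^2$ or $\nl{w_1}^2$ --- first components only --- so all dangerous terms land on the $\mu$-damped quantities. That structural use of the divergence-free condition is the heart of the abridged result and is absent from your proposal. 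Relatedly, your plan to tame $\ln\left(\nh{\eta}/(2\pi\nl{\eta})\right)$ via $\nh{\eta}^2\le\nl{\eta}\nH{\eta}$ and $H^2$ bounds does not control anything, since $\nh{\eta}$ in the resulting denominator has no lower bound; the paper instead sacrifices a fraction of the dissipation and applies the elementary Lemma~\ref{lemma log min}, converting the logarithm of the unknown ratio into $\ln(cZ(t))$, which is where the $2\ln G+\C G^4$ in the threshold for $\mu$ actually comes from.
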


\begin{theorem}\label{thm v}
 Let $(v,w)$ be a strong solution of \eqref{MHD vw} which at time $t=0$ has evolved enough so that Proposition~\ref{prop upper bounds} holds with $t_0=0$. Let $\oI$ satisfy \eqref{I1}, where
 \begin{align*}
 h \leq\cI^{-1}(\alpha-\beta)^{\half}\mu^{-\half},
 \text{ and }\quad
\mu>\frac{\pi^2\cL^4 G^2(4+\nua^2 G^2)^2}{16\nua}
\end{align*}
 (so $h\sim G^{-3}$).
 Then there is a unique strong solution, $(\vt,\wt)$, of \eqref{MHD data v} corresponding to $(v,w)$ which exists globally in time, and furthermore $\nl{v(t)-\vt(t)} + \nl{w(t)-\wt(t)} \to 0$ exponentially as $t\to\infty$.
\end{theorem}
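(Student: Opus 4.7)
The plan is to track the evolution of the error variables $V := v-\vt$ and $W := w-\wt$. Subtracting \eqref{MHD vw} from \eqref{MHD data v} and using the bilinear identity $\ob{w}{v}-\ob{\wt}{\vt} = \ob{W}{v} + \ob{\wt}{V}$ (and analogously in the $W$-equation), I obtain
\begin{align*}
\pt V - \alpha\ol V - \beta\ol W + \ob{W}{v} + \ob{\wt}{V} &= -\og q_1 - \mu\,\oI(V),\\
\pt W - \alpha\ol W - \beta\ol V + \ob{V}{w} + \ob{\vt}{W} &= -\og q_2,
\end{align*}
where $q_1,q_2$ are modified pressures. The crucial feature, in contrast to Theorem~\ref{thm all}, is that the damping $-\mu\,\oI(V)$ appears only in the $V$-equation; decay of $W$ must therefore be coaxed out of the intrinsic MHD dissipation through the $\beta\ol V$ cross-diffusion and the coupling hidden in $\ob{V}{w}$.

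Next I would take the $\sL$ inner product of the first equation with $V$ and of the second with $W$. Since $\vt,\wt$ are divergence free, the trilinear terms $\ip{\ob{\wt}{V}}{V}$ and $\ip{\ob{\vt}{W}}{W}$ vanish, as do the pressure gradients. Adding, using $2\beta\ip{\og V}{\og W}\geq -\beta(\nh{V}^2+\nh{W}^2)$ on the cross-diffusion, and processing the feedback via $\mu\ip{\oI V}{V} = \mu\nl{V}^2 - \mu\ip{V-\oI V}{V}$ with \eqref{I1}, \eqref{Young}, and the resolution hypothesis $h^2 \le \nua/(\cI^2\mu)$, yields (with $Y := \nl{V}^2+\nl{W}^2$)
\[
\tfrac{1}{2}\dt Y + \tfrac{\nua}{2}\nh{V}^2 + \nua\,\nh{W}^2 + \tfrac{\mu}{2}\nl{V}^2 \le |\ib{W}{v}{V}| + |\ib{V}{w}{W}|.
\]

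The trilinear terms are estimated by H\"older plus Ladyzhenskaya \eqref{Lady}, summing to
\[
|\ib{W}{v}{V}| + |\ib{V}{w}{W}| \le \cL\,\nl{V}^{1/2}\nl{W}^{1/2}\nh{V}^{1/2}\nh{W}^{1/2}(\nh{v}+\nh{w}).
\]
I then apply Young's inequality twice: first to absorb the factor $\nh{V}^{1/2}\nh{W}^{1/2}$ into the $\nua$-dissipation, and then to absorb the resulting $\nl{V}^2$ into half of the feedback term $\tfrac{\mu}{2}\nl{V}^2$. The residue lands entirely on $\nl{W}^2$ with coefficient of the form $\cL^4(\nh{v}+\nh{w})^4/(C\mu\nua^2)$. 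Using Poincar\'e on the remaining $\nua\nh{W}^2$ to produce $4\pi^2\nua\,\nl{W}^2$ and combining, the inequality reduces to
\[
\dt Y + \phi(t)\,Y \le 0,\qquad \phi(t) := \min\!\left\{\tfrac{\mu}{2},\ 4\pi^2\nua - \tfrac{\cL^4(\nh{v}+\nh{w})^4}{C\mu\nua^2}\right\}.
\]

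The final step is to invoke Proposition~\ref{gronwall}. Proposition~\ref{prop upper bounds} supplies control of $\nh{v}(t)+\nh{w}(t)$ (pointwise for $t\ge 0$ by hypothesis, and in $L^4$ time-average) of an order that, together with the explicit threshold $\mu > \pi^2\cL^4 G^2(4+\nua^2 G^2)^2/(16\nua)$, forces $\liminf_{t\to\infty}\tfrac{1}{T}\int_t^{t+T}\phi\,ds > 0$, while the $L^\infty$-in-time control of $\nh{v}+\nh{w}$ for strong solutions delivers $\limsup\tfrac{1}{T}\int_t^{t+T}\phi^-\,ds < \infty$. The generalized Gr\"onwall lemma then yields exponential decay $Y(t)\to 0$. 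Existence and uniqueness of the strong solution $(\vt,\wt)$ follow from a standard Galerkin scheme using the same a priori estimate. The main obstacle—and the reason $\mu$ must scale like $G^6$ rather than $G^2$ as in Theorem~\ref{thm all}—is that the decay of $\nl{W}^2$ is not enforced by the feedback but only by the intrinsic diffusion, competing against a nonlinear amplification of order $(\nh{v}+\nh{w})^4$ instead of $(\nh{v}+\nh{w})^2$. Partitioning the Ladyzhenskaya estimate so that the $\nl{V}^2$ portion is consumed by the feedback while the $\nl{W}^2$ portion is controlled by Poincar\'e-strengthened diffusion is the core technical point.
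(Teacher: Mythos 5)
Your overall architecture is the right one and matches the paper's: form the error equations, note that the feedback acts only on the $V$-equation, use Poincar\'e on the surviving $\nua\nh{W}^2$ dissipation to manufacture decay of $\nl{W}^2$, and close with the generalized Gr\"onwall lemma. The flaw is in how you distribute the weights in Young's inequality on the trilinear terms. Starting from $\cL\,\nl{V}^{1/2}\nl{W}^{1/2}\nh{V}^{1/2}\nh{W}^{1/2}(\nh{v}+\nh{w})$ and applying Young symmetrically twice, you land the entire residue on $\nl{W}^2$ with coefficient proportional to $(\nh{v}+\nh{w})^4$, i.e.\ \emph{quadratic} in $Z=\nh{v}^2+\nh{w}^2$. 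To verify \eqref{gronwall inf} for your $\phi$ you must then bound $\liminf_{t\to\infty}\frac{1}{T}\int_t^{t+T}Z^2\,ds$, and the only route Proposition~\ref{prop upper bounds} offers is $\sup Z\cdot\frac{1}{T}\int Z$, which by \eqref{sup bound} carries the factor $e^{\C G^4}$. Your argument therefore only closes for $\mu\gtrsim G^4e^{\C G^4}$, not for the polynomial threshold $\mu>\pi^2\cL^4G^2(4+\nua^2G^2)^2/(16\nua)\sim G^6$ asserted in the theorem. The step ``the explicit threshold forces $\liminf\ldots>0$'' is exactly where the proof breaks.

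The paper avoids this by splitting asymmetrically with a free parameter $\delta$ (Lemma~\ref{lemma nonlinear term est}, estimates \eqref{nonlin est 2} and \eqref{nonlin est 2.1}): the residual coefficients are $\frac{\cL\delta}{4}\nh{v}^2$ on $\nl{\zeta}^2$ and $\frac{\cL^2}{8\epsilon\delta^2}\nh{v}^2$ on $\nl{\eta}^2$, both \emph{linear} in $Z$, so only the time-averaged bound \eqref{int bound} (which is polynomial in $G$, no exponential) is ever needed. The price is that $\delta$ must be taken small, $\delta\lesssim\nua/(\cL(1+\nua^2G^2))\sim G^{-2}$, so that the Poincar\'e-strengthened dissipation $8\pi^2\gamma$ dominates $\frac{\cL\delta}{2}\cdot\frac{1}{T}\int Z$; the requirement on the $\eta$-side then becomes $\mu\gtrsim\delta^{-2}G^2\sim G^6$, which is precisely the stated threshold. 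If you redo your Young inequalities with this $\delta$-weighting (putting the small weight on the $\nl{W}$-factor and the large weight on the $\nl{V}$-factor that the feedback can absorb), your argument goes through verbatim.
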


In the next three theorems, by using the $\sL$ convergence results we just established, we show that solutions of \eqref{MHD data all}, \eqref{MHD data 1 comp}, and \eqref{MHD data v} will converge exponentially in time to the reference solution in the stronger topology of the $\sH$-norm.
\begin{theorem}\label{thm H1 all}
Let $(v,w)$ be a strong solution of \eqref{MHD vw}
which at time $t=0$ has evolved enough so that 
Proposition~\ref{prop upper bounds} holds with $t_0=0$.
Let $\oI$ satisfy \eqref{I1}, where 
\begin{align*}
	h < (2\sqrt{2}\cI)^{-1}\nua^{\half}\mu^{-\half}, \text{ and }\quad
     \mu > \frac{\pi^2(\cL^4+(\alpha-\beta)^4)}{\alpha-\beta}G^{2}
\end{align*} 
(so $h \sim G^{-1}$).
Then there is a unique strong solution, $(\vt,\wt)$,
of \eqref{MHD data all} corresponding to $(v,w)$
which exists globally in time, and furthermore
$\Vert v(t) - \vt(t)\Vert_{H^1} + \Vert w(t) - \wt(t)\Vert_{H^1} \to 0$ exponentially as $t\to\infty$.
\end{theorem}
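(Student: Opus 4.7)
The strategy is to run an $\sH$-level energy estimate for $\eta := v - \vt$ and $\xi := w - \wt$ and close it using the exponential $\sL$ decay already supplied by Theorem~\ref{thm all}. Subtracting \eqref{MHD data all} from \eqref{MHD vw} and rewriting $\ob{w}{v} - \ob{\wt}{\vt} = \ob{\wt}{\eta} + \ob{\xi}{v}$ (with the mirror identity for the $\xi$-equation), I would take the $\sL$ inner product of the $\eta$-equation with $-\ol\eta$, of the $\xi$-equation with $-\ol\xi$, and add. Pressure terms vanish because $\odiv\eta = \odiv\xi = 0$, and the dissipative block is coercive: $2|\ip{\ol\eta}{\ol\xi}| \leq \nH{\eta}^2 + \nH{\xi}^2$ combined with $\alpha - \beta = \nua$ gives at least $\nua(\nH{\eta}^2 + \nH{\xi}^2)$ on the left. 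The feedback contribution $\mu\ip{\oI\eta}{\ol\eta}$ splits as $-\mu\nh{\eta}^2 + \mu\ip{\oI\eta - \eta}{\ol\eta}$; the first piece becomes additional $\sH$-damping on the LHS, while \eqref{I1} and Young's inequality bound the second by $\tfrac{\nua}{4}\nH{\eta}^2 + \tfrac{\mu^2\cI^2 h^2}{\nua}\nh{\eta}^2$. The hypothesis $h < (2\sqrt{2}\cI)^{-1}\nua^{1/2}\mu^{-1/2}$ keeps $\mu^2\cI^2 h^2/\nua < \mu/8$, so these error contributions are absorbable and leave $\tfrac{3\nua}{4}(\nH{\eta}^2 + \nH{\xi}^2)$ and $\tfrac{7\mu}{8}(\nh{\eta}^2 + \nh{\xi}^2)$ of effective dissipation on the LHS.

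The four nonlinear terms split naturally into two types. The ``diagonal'' pairs $\ip{\ob{\wt}{\eta}}{\ol\eta}$ and $\ip{\ob{\vt}{\xi}}{\ol\xi}$ satisfy the standard divergence-free identity $\ip{(\wt\cdot\og)\eta}{\ol\eta} = -\sum_{i,j,k}\int_\Omega \partial_k \wt_i\, \partial_i \eta_j\, \partial_k \eta_j\, dx$, and H\"older combined with \eqref{Lady} bounds them by $\cL \nh{\wt}\nh{\eta}\nH{\eta}$; an absorbing-ball argument for the data assimilation system (analogous to Proposition~\ref{prop upper bounds}, using the added damping) supplies $\nh{\vt}^2, \nh{\wt}^2 \lesssim G^2$, and Young's inequality absorbs this against $\tfrac{\nua}{16}\nH{\eta}^2 + \tfrac{CG^2}{\nua}\nh{\eta}^2$. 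For the ``cross'' pairs $\ip{\ob{\xi}{v}}{\ol\eta}$ and $\ip{\ob{\eta}{w}}{\ol\xi}$, H\"older and \eqref{Lady} give
\begin{equation*}
|\ip{\ob{\xi}{v}}{\ol\eta}| \leq \cL\,\nl{\xi}^{1/2}\nh{\xi}^{1/2}\nh{v}^{1/2}\nH{v}^{1/2}\nH{\eta},
\end{equation*}
and a double Young's---first $\nH{\eta}\cdot(\cdots) \leq \tfrac{\nua}{16}\nH{\eta}^2 + \tfrac{C}{\nua}\nl{\xi}\nh{\xi}\nh{v}\nH{v}$, then $ab \leq \tfrac12(a^2+b^2)$ applied to $a = \nh{\xi}\nh{v}$, $b = \nl{\xi}\nH{v}$---yields the bound $\tfrac{\nua}{16}\nH{\eta}^2 + \tfrac{C}{\nua}(\nh{\xi}^2\nh{v}^2 + \nl{\xi}^2\nH{v}^2)$. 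Once again $\nh{v}^2 \lesssim G^2$ is absorbed into the damping under the standing hypothesis $\mu > \pi^2(\cL^4 + \nua^4)G^2/\nua$.

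Collecting everything for $Y(t) := \nh{\eta(t)}^2 + \nh{\xi(t)}^2$ produces a differential inequality $\dt Y + c\mu Y \leq \phi(t)$ with $c > 0$ and $\phi(t) = \tfrac{C}{\nua}(\nl{\eta}^2 + \nl{\xi}^2)(\nH{v}^2 + \nH{w}^2)$. Theorem~\ref{thm all} supplies $\nl{\eta(t)}^2 + \nl{\xi(t)}^2 \leq K e^{-2\gamma t}$ for some $\gamma, K > 0$, while Proposition~\ref{prop upper bounds} supplies a uniform running-average bound $\int_t^{t+1}(\nH{v}^2 + \nH{w}^2)\,ds \leq M$. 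Writing $Y$ via Duhamel and splitting the time integral at $s = t/2$---using $e^{-c\mu(t-s)} \leq e^{-c\mu t/2}$ together with geometric summability of $\int_0^\infty e^{-2\gamma s}(\nH{v}^2 + \nH{w}^2)\,ds$ in the head, and $e^{-2\gamma s} \leq e^{-\gamma t}$ together with $\int_{t/2}^t (\nH{v}^2 + \nH{w}^2)\,ds \leq Mt/2$ in the tail---gives $Y(t) \lesssim e^{-c\mu t/2} + t\,e^{-\gamma t}$, i.e.\ exponential decay. The main obstacle is precisely this last step: Proposition~\ref{gronwall} would only deliver $Y \to 0$ (and even that needs $\phi(t) \to 0$ pointwise, which is not immediate here because $\nH{v}^2, \nH{w}^2$ are only $L^2_{\mathrm{loc}}$ in time), so the Duhamel split that combines exponential $\sL$ decay with the uniform time averages of $\nH{v}^2 + \nH{w}^2$ is what upgrades convergence to the claimed exponential rate in $\sH$.
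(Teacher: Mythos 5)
Your top-level skeleton (test the error equations with $-\ol\eta$, $-\ol\zeta$, treat the $\sL$-error supplied by Theorem~\ref{thm all} as an exponentially decaying forcing, and integrate the resulting linear inequality) is exactly the paper's, and your handling of the pressure, the $\beta$-coupling, and the interpolant error term is fine. The gap is in where you let the large coefficients land in the nonlinear estimates. For the cross term you stop at $\tfrac{\nua}{16}\nH{\eta}^2+\tfrac{C}{\nua}\bigl(\nh{\xi}^2\nh{v}^2+\nl{\xi}^2\nH{v}^2\bigr)$ and for the diagonal term at $\tfrac{\nua}{16}\nH{\eta}^2+\tfrac{C}{\nua}\nh{\wt}^2\nh{\eta}^2$; in both cases a factor $\nh{v}^2$ or $\nh{\wt}^2$ multiplies $\nh{\cdot}^2$ of the error and must therefore be dominated by the feedback damping $\mu\nh{\cdot}^2$. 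But the bound ``$\nh{v}^2,\nh{\wt}^2\lesssim G^2$'' is not available for MHD: unlike 2D Navier--Stokes, the enstrophy balance does not annihilate the nonlinearity, and \eqref{sup bound} only gives $\nh{v}^2+\nh{w}^2\leq 10\pi^2\nua^2G^2e^{\C G^4}$. So your argument closes only for $\mu\gtrsim G^2e^{\C G^4}$ --- the regime of Theorem~\ref{T2 thm 1} --- not under the stated hypothesis $\mu>\pi^2(\cL^4+\nua^4)G^2/\nua$. On top of that, your diagonal estimate requires a uniform-in-time $H^1$ bound on the data assimilation solution $(\vt,\wt)$ itself; this is not free (the paper establishes such an absorbing-ball bound only in the proof of Theorem~\ref{T2 thm 1}, via a separate bootstrap/continuity argument) and is precisely what the paper's proof is engineered to avoid.

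The fix, which is the paper's route, is to interpolate one step further so that every large coefficient ends up multiplying $\nl{\eta}^2+\nl{\zeta}^2$ (which decays exponentially) rather than $\nh{\eta}^2+\nh{\zeta}^2$ (which must be paid for by $\mu$). For the cross term, use $\nh{\zeta}\leq\tfrac{1}{2\pi}\nH{\zeta}$ before Young's, arriving at $\tfrac{1}{4\pi^2}\bigl(\tfrac{4}{\nua}\bigr)^3\nh{v}^2\nH{v}^2\nl{\zeta}^2+\tfrac{\nua}{16}\bigl(\nH{\eta}^2+\nH{\zeta}^2\bigr)$. For the diagonal term, split $\wt=w-\zeta$: the $w$-piece is pushed onto the decaying forcing via $\nh{\eta}^2\leq\nl{\eta}\nH{\eta}$, giving $\bigl(\tfrac{4}{\nua}\bigr)^3\nh{w}^4\nl{\eta}^2+\tfrac{\nua}{8}\nH{\eta}^2$, while the $\zeta$-piece produces $\tfrac{2}{\nua}\nl{\eta}\nl{\zeta}\bigl(\nH{\eta}^2+\nH{\zeta}^2\bigr)$, which is absorbed by the viscosity once Theorem~\ref{thm all} has driven $\nl{\eta}\nl{\zeta}$ below $\nua^2/16$. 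With this arrangement only the viscous term $\nua\nH{\cdot}^2$ absorbs nonlinear contributions, $\mu$ is needed only against the interpolant error, and one obtains $\dt Y+\mu Y\leq M_G\bigl(\nl{\eta}^2+\nl{\zeta}^2\bigr)$ with $M_G$ a (large but harmless) constant. Finally, your concern that $\nH{v}^2+\nH{w}^2$ is only locally square-integrable in time is unfounded: \eqref{sup lap bound} is a pointwise-in-time bound, so your Duhamel splitting at $t/2$, while correct, is unnecessary.
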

\begin{theorem}\label{thm H1 1st}
Let $(v,w)$ be a strong solution of \eqref{MHD vw}
which at time $t=0$ has evolved enough so that 
Proposition~\ref{prop upper bounds} holds with $t_0=0$.
Let $\oI$ satisfy \eqref{I1}, where 
\begin{align*}
	h < (2\sqrt{2}\cI)^{-1}\nua^{\half}\mu^{-\half}, \text{ and }\quad
    \mu > 32\pi^2c^2\nua\left(\tilde{c}+2\ln G + \C G^4\right)G^2
\end{align*} 
(so $h \sim G^{-3}$).
Then there is a unique strong solution, $(\vt,\wt)$,
of \eqref{MHD data 1 comp} corresponding to $(v,w)$
which exists globally in time, and furthermore
$\Vert v(t) - \vt(t)\Vert_{H^1} + \Vert w(t) - \wt(t)\Vert_{H^1} \to 0$ exponentially as $t\to\infty$.
\end{theorem}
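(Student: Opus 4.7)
The plan is to bootstrap the $\sL$ convergence established in Theorem \ref{thm 1st} to $\sH$ convergence via an $\sH$--energy identity for the error $V := v-\vt$, $W := w-\wt$. Subtracting System \ref{sys data 1} from \eqref{MHD vw}, and using the standard decomposition $\ob{w}{v}-\ob{\wt}{\vt} = \ob{W}{v}+\ob{\wt}{V}$ (symmetrically for the other equation), I take the $\sL$ inner product of the resulting equations with $-\ol V$ and $-\ol W$ and add. The pressure terms vanish by the divergence--free condition. Writing $\mu\oI(V_1) = \mu V_1 - \mu(V_1-\oI(V_1))$ (and analogously for $W_1$), integrating by parts, and estimating $2\beta\ip{\ol V}{\ol W}\geq -\beta(\nH{V}^2+\nH{W}^2)$, I obtain
\begin{equation*}
\half\dt(\nh{V}^2+\nh{W}^2) + \nua(\nH{V}^2+\nH{W}^2) + \mu(\nh{V_1}^2+\nh{W_1}^2) \leq |I_{\text{nl}}| + |I_{\text{fb}}|.
\end{equation*}

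For the feedback remainder I apply \eqref{I1} and Young's inequality:
\begin{equation*}
|I_{\text{fb}}| \leq \mu\cI h(\nh{V_1}\nH{V_1}+\nh{W_1}\nH{W_1}) \leq \tfrac{\nua}{8}(\nH{V}^2+\nH{W}^2) + \tfrac{2\mu^2\cI^2 h^2}{\nua}(\nh{V_1}^2+\nh{W_1}^2).
\end{equation*}
The hypothesis $h \leq (2\sqrt{2}\cI)^{-1}\nua^{\half}\mu^{-\half}$ forces $2\mu^2\cI^2 h^2/\nua \leq \mu/4$, so both pieces absorb strictly into the coercive terms on the left.

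For $|I_{\text{nl}}|$ I bound each of the trilinear forms $\ip{\ob{\wt}{V}}{\ol V}$, $\ip{\ob{W}{v}}{\ol V}$, and their $W$--analogues via the Titi--Brezis inequality \eqref{Titi-Brezis}, selecting the log--argument $z$ in each application so that the factor $1+\ln(\nH{z}/(2\pi\nh{z}))$ is controlled uniformly in time -- either by Proposition \ref{prop upper bounds} when $z\in\{v,w\}$ (or $\vt,\wt$, using the analogous upper bounds obtained in establishing global existence of strong solutions to System \ref{sys data 1}), or by combining the uniform $\sH$ bound on $V,W$ with the exponential $\sL$ decay from Theorem \ref{thm 1st}. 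Each $\nH{V}$ or $\nH{W}$ factor is then split off by Young's inequality with weight $\nua/4$ and absorbed into the dissipation; what remains is $K(t)(\nh{V}^2+\nh{W}^2)+\phi(t)$, where $K(t)$ is a product of squared $\sH$--norms of $v,w,\vt,\wt$ times a log factor, and $\phi(t)$ inherits exponential decay from Theorem \ref{thm 1st}.

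Combining the three contributions yields a differential inequality $\dt Y + \psi(t)Y \leq \phi(t)$ for $Y := \nh{V}^2+\nh{W}^2$, where by Poincar\'e $\psi(t) \geq 8\pi^2\nua - K(t)$. The main obstacle -- and what pins down the $\mu$--threshold -- is verifying the hypotheses \eqref{gronwall inf}--\eqref{gronwall sup} of Proposition \ref{gronwall}, that is, that $\limsup_{t\to\infty}T^{-1}\int_t^{t+T}K(s)\,ds$ is strictly less than $8\pi^2\nua$. This is precisely what the threshold $\mu > 32\pi^2 c^2\nua(\tilde c + 2\ln G + \C G^4)G^2$ guarantees, through the time--averaged $\sH$--bounds from Proposition \ref{prop upper bounds} combined with the polynomial--in--$G$ control of the log factors. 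With these hypotheses in place, Proposition \ref{gronwall} delivers $Y(t)\to 0$ exponentially as $t\to\infty$, completing the argument.
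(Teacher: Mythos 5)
Your overall strategy---bootstrapping the $\sL$ decay of Theorem~\ref{thm 1st} into an $\sH$ energy inequality---is the right one, and your treatment of the feedback remainder matches the paper's. But the final step, as you describe it, has a genuine gap. After absorbing the $\nH{V}$, $\nH{W}$ factors by Young's inequality, your remainder is $K(t)\left(\nh{V}^2+\nh{W}^2\right)$, i.e.\ a coefficient multiplying the \emph{full} $H^1$ error, and you propose to close via Proposition~\ref{gronwall} with $\psi(t)\geq 8\pi^2\nua-K(t)$, claiming the $\mu$-threshold forces $\limsup_{t\to\infty}T^{-1}\int_t^{t+T}K(s)\,ds<8\pi^2\nua$. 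This cannot work: $\mu$ does not appear in $K(t)$ at all, because the feedback damping acts only on $\nh{V_1}^2+\nh{W_1}^2$, not on $\nh{V}^2+\nh{W}^2$; and $K(t)$ is built from squared $\sH$-norms of $v,w,\vt,\wt$ (times log factors), whose time averages scale like $\nua^2G^2$ or worse by Proposition~\ref{prop upper bounds}, so \eqref{gronwall inf} fails for large $G$ no matter how large $\mu$ is. The $\mu$-threshold in the statement is inherited from Theorem~\ref{thm 1st} (it is what makes the $\sL$ error decay); it is not re-derived at the $H^1$ level.

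The paper closes the argument differently, and you would need something equivalent. The nonlinear terms are estimated so that the non-absorbed remainders carry the \emph{$\sL$ error}, which decays: e.g.\ $\a{\ip{\ob{\zeta}{v}}{\ol\eta}}$ is bounded via $L^4$--H\"older by a multiple of $\nh{v}^2\nH{v}^2\nl{\zeta}^2$ plus absorbable dissipation, and $\ip{\ob{w}{\eta}}{\ol\eta}$ is reduced to a $\nl{\eta}^2$ remainder using the interpolation $\nh{\eta}^2\leq\nl{\eta}\nH{\eta}$. The one term genuinely quadratic in the error, $\ip{\ob{\zeta}{\eta}}{\ol\eta}$, produces $\frac{2}{\nua}\nl{\eta}\nl{\zeta}\left(\nH{\eta}^2+\nH{\zeta}^2\right)$, which is absorbed into the Laplacian dissipation only after a transient time $T_2$ at which $\nl{\eta}\nl{\zeta}<\nua^2/16$---supplied by Theorem~\ref{thm 1st}. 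One then has $\dt Y+2\pi^2\nua Y\leq K'M_Ge^{-a't}$ with a \emph{constant} damping rate, which integrates explicitly; no generalized Gr\"onwall is needed. (Your use of \eqref{Titi-Brezis} for all trilinear terms could be salvaged by a further interpolation $\nh{V}^2\leq\nl{V}\nH{V}$ and Young step to convert each $K(t)\nh{V}^2$ remainder into a $\nl{V}^2$ remainder, but as written the proposal does not do this, and the Gr\"onwall step it relies on is invalid.)
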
 
\begin{theorem}\label{thm H1 v}
Let $(v,w)$ be a strong solution of \eqref{MHD vw}
which at time $t=0$ has evolved enough so that 
Proposition~\ref{prop upper bounds} holds with $t_0=0$.
Let $\oI$ satisfy \eqref{I1}, where 
\begin{align*}
	h < (2\sqrt{2}\cI)^{-1}\nua^{\half}\mu^{-\half}, \text{ and }\quad
     \mu>\frac{\pi^2\cL^4 G^2(4+\nua^2 G^2)^2}{16\nua}
\end{align*} 
(so $h \sim G^{-3}$).
Then there is a unique strong solution, $(\vt,\wt)$,
of \eqref{MHD data v} corresponding to $(v,w)$
which exists globally in time, and furthermore
$\Vert v(t) - \vt(t)\Vert_{H^1} + \Vert w(t) - \wt(t)\Vert_{H^1} \to 0$ exponentially as $t\to\infty$.
\end{theorem}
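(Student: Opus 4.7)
The plan is to upgrade the $\sL$-convergence of Theorem~\ref{thm v} to $\sH$-convergence by performing an energy estimate at the $\sH$-level. Setting $\eta := v-\vt$ and $\xi := w-\wt$, I first subtract \eqref{MHD data v} from \eqref{MHD vw} to obtain
\begin{equation*}
\pt\eta - \alpha\ol\eta - \beta\ol\xi + \ob{\xi}{v} + \ob{\wt}{\eta} = -\og q_1 - \mu\oI(\eta),
\end{equation*}
\begin{equation*}
\pt\xi - \alpha\ol\xi - \beta\ol\eta + \ob{\eta}{w} + \ob{\vt}{\xi} = -\og q_2,
\end{equation*}
with $\odiv\eta=\odiv\xi=0$. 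I would pair the first equation with $-\ol\eta$ and the second with $-\ol\xi$ in $\sL$, add them, and use incompressibility of $\wt$ and $\vt$ to kill the pressure terms. The linear parts produce $\alpha(\nH{\eta}^2+\nH{\xi}^2)$, and the cross Laplacian contribution $2\beta\ip{\ol\eta}{\ol\xi}$ is dispatched by Young's inequality \eqref{Young}, leaving a net dissipation of at least $\nua(\nH{\eta}^2+\nH{\xi}^2)$.

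For the feedback term I would write $-\mu\ip{\oI(\eta)}{\ol\eta}=\mu\nh{\eta}^2+\mu\ip{\oI(\eta)-\eta}{\ol\eta}$. Using \eqref{I1} together with the hypothesis $h<(2\sqrt{2}\cI)^{-1}\nua^{\half}\mu^{-\half}$,
\begin{equation*}
\mu\a{\ip{\oI(\eta)-\eta}{\ol\eta}} \le \mu\cI h\nh{\eta}\nH{\eta} \le \tfrac{\nua}{4}\nH{\eta}^2 + \tfrac{\mu}{8}\nh{\eta}^2,
\end{equation*}
so that the net feedback contribution is bounded below by $\tfrac{7\mu}{8}\nh{\eta}^2-\tfrac{\nua}{4}\nH{\eta}^2$. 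A crucial asymmetry appears here: there is no corresponding $\nh{\xi}^2$ damping, so all damping on the $\xi$-side must come from $\nua\nH{\xi}^2$ combined with the $\sL$-decay already supplied by Theorem~\ref{thm v}.

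The core work is estimating the four nonlinear terms $\ip{\ob{\xi}{v}}{\ol\eta}$, $\ip{\ob{\wt}{\eta}}{\ol\eta}$, $\ip{\ob{\eta}{w}}{\ol\xi}$, and $\ip{\ob{\vt}{\xi}}{\ol\xi}$. By H\"older and Ladyzhenskaya \eqref{Lady}, a typical bound reads
\begin{equation*}
\a{\Ip{\ob{\xi}{v}}{\ol\eta}} \le \cL\nl{\xi}^{\half}\nh{\xi}^{\half}\nh{v}^{\half}\nH{v}^{\half}\nH{\eta},
\end{equation*}
with analogous estimates for the other three terms (using $\nh{\vt},\nh{\wt}$, which are uniformly bounded by standard strong-solution theory for \eqref{MHD data v}). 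Two applications of Young's inequality split each bound into (i) a piece absorbed into $\tfrac{\nua}{2}(\nH{\eta}^2+\nH{\xi}^2)$, (ii) a piece of the form $\psi_{\mathrm{NL}}(t)(\nh{\eta}^2+\nh{\xi}^2)$ whose coefficient $\psi_{\mathrm{NL}}$ has bounded long-time average thanks to the uniform $\sH$-bound and the $L^2_t H^2$-estimate for $(v,w)$ supplied by Proposition~\ref{prop upper bounds}, and (iii) a remainder of the form $\phi_0(t)(\nl{\eta}^2+\nl{\xi}^2)$ whose time-integrated value decays exponentially by Theorem~\ref{thm v}.

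Setting $Y:=\nh{\eta}^2+\nh{\xi}^2$, everything combines into a differential inequality of the form $\dt Y + \psi(t)Y \le \phi(t)$, where $\psi$ is the sum of the constant feedback coefficient with $-\psi_{\mathrm{NL}}$ and $\phi(t)\to 0$ in time average. The main obstacle is verifying the hypotheses \eqref{gronwall inf}-\eqref{gronwall sup} of Proposition~\ref{gronwall} for this $\psi$: this requires showing that the positive part of $\psi$ dominates the time-averaged $\psi_{\mathrm{NL}}$, which is precisely the balance already encoded in the lower bound $\mu>\pi^2\cL^4G^2(4+\nua^2G^2)^2/(16\nua)$ inherited from Theorem~\ref{thm v}. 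Once this is done, Proposition~\ref{gronwall} yields exponential decay of $Y(t)$, which is the claimed $\sH$ convergence of $\vt\to v$ and $\wt\to w$.
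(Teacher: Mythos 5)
Your overall skeleton (an $H^1$ energy estimate driven by the already-established $L^2$ decay of Theorem~\ref{thm v}) is the right one, and the paper's own ``proof'' of this theorem is in fact a one-line reference to the proof of Theorem~\ref{thm H1 1st}, which has exactly this architecture. However, the way you propose to close the estimate has a genuine gap. Your category (ii) --- nonlinear remainders of the form $\psi_{\mathrm{NL}}(t)\left(\nh{\eta}^2+\nh{\xi}^2\right)$ with $\psi_{\mathrm{NL}}$ of bounded time average --- cannot be dominated in the sense required by \eqref{gronwall inf}. The coefficients you would obtain this way (e.g.\ $\nl{\wt}^2\nh{\wt}^2$ from $\ip{\ob{\wt}{\eta}}{\ol\eta}$) have size of order $G^4e^{\C G^4}$ by \eqref{sup bound}, whereas the only damping available for the $\xi$-component is $4\pi^2\nua$ coming from the Laplacian (there is no feedback term in the $\wt$-equation, so the hypothesis on $\mu$ is irrelevant there), and even on the $\eta$-side $\mu\sim G^6$ does not beat $G^4e^{\C G^4}$. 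The paper's proofs of Theorems~\ref{thm H1 all} and \ref{thm H1 1st} avoid category (ii) entirely: every nonlinear remainder is converted, via the interpolation $\nh{\eta}^2\le\nl{\eta}\nH{\eta}$ followed by Young, into either a small multiple of $\nH{\eta}^2+\nH{\xi}^2$ (absorbed by the dissipation) or a bounded coefficient times $\nl{\eta}^2+\nl{\xi}^2$ (which decays exponentially by the $L^2$ theorem and goes into $\phi$). One then integrates the resulting constant-coefficient inequality $\dt Y+2\pi^2\nua Y\le M_GKe^{-at}$ directly with an integrating factor; Proposition~\ref{gronwall} with a time-dependent $\psi$ is not needed and, as organized in your proposal, would not apply.

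A second, related gap: your treatment of $\ip{\ob{\wt}{\eta}}{\ol\eta}$ and $\ip{\ob{\vt}{\xi}}{\ol\xi}$ assumes that $\nh{\vt}$ and $\nh{\wt}$ are uniformly bounded ``by standard strong-solution theory.'' A uniform-in-time $H^1$ bound for the feedback-controlled system is not standard and is essentially of the same difficulty as the theorem itself (compare the proof of Theorem~\ref{T2 thm 1}, where such a bound must be established simultaneously with convergence via a continuation argument). The paper instead writes $\wt=w-\xi$ and $\vt=v-\eta$; the pieces involving the reference solution are handled as above, while the error-quadratic pieces such as $-\ip{\ob{\xi}{\eta}}{\ol\eta}$ produce a term $\frac{4}{\nua}\nl{\eta}\nl{\xi}\left(\nH{\eta}^2+\nH{\xi}^2\right)$, which can be absorbed into the dissipation only after waiting until a time $T$ large enough that Theorem~\ref{thm v} forces $\nl{\eta}\nl{\xi}\le\nua^2/16$. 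This waiting step (and the resulting possibly longer transient before $H^1$ decay is observed, noted explicitly in the paper's remark after Theorem~\ref{thm H1 v}) is an essential ingredient that your proposal omits.
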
 

\begin{remark}
Observing the Poincar\'e inequality, the results of Theorems~\ref{thm H1 all}-\ref{thm H1 v} seem to imply those of Theorems~\ref{thm all}-\ref{thm v}, but the spatial resolution is required to be slightly finer for the $\sH$ results. Also, based on our analysis, there may be a longer period of time that must pass before exponential convergence is observed in the $\sH$-norm than in the $\sL$-norm (see the estimates in \eqref{H1_via_L2_One} and \eqref{H1_via_L2_Two}). However, we point out that in computational results regarding data assimilation in the context of the one-dimensional Kuramoto-Sivasinsky equation, convergence times for both norms are almost identical (c.f. \cite{Larios_Pei_nonlinear_da_2017} for more details). 
\
\end{remark}
 
\subsection{Results for Type 2 Interpolants}
 \begin{theorem}\label{T2 thm 1}
  Let $(v,w)$ be a strong solution of \eqref{MHD vw},
  which at time $t=0$ has evolved enough so that 
  Proposition~\ref{prop upper bounds} holds with $t_0=0$.
  Then $h\sim G^{-6}e^{-\C G^4}$ and $\mu \sim G^{12}e^{2\C G^4}$ can be chosen so that if $\oI$ satisfies \eqref{I2} then there is a unique strong solution $(\vt,\wt)$ 
  of \eqref{MHD data 1 comp} corresponding to $(v,w)$
  which exists globally in time, and $\Vert v(t)-\vt(t)\Vert_{H^1} + \Vert w(t)-\wt(t)\Vert_{H^1} \to 0$ exponentially as $t\to\infty$.
 \end{theorem}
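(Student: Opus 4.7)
The plan is to perform an $H^1$-level energy estimate directly on the error system, adapting the Brezis--Gallouet--Titi strategy of Theorem~\ref{thm H1 1st} and absorbing the new $\nH{\cdot}$ contribution coming from \eqref{I2} into the viscous dissipation. Set $\phi:=v-\vt$ and $\psi:=w-\wt$. Subtracting \eqref{MHD data 1 comp} from \eqref{MHD vw} and rewriting $\ob{w}{v}-\ob{\wt}{\vt}=\ob{\psi}{v}+\ob{\wt}{\phi}$ (with its analog for the other equation), the error system carries feedback terms $-\mu\oI(\phi_1)e_1$ and $-\mu\oI(\psi_1)e_1$. Global strong existence of $(\vt,\wt)$ follows simultaneously with the convergence claim by a standard Galerkin argument driven by the same a priori estimate.

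Taking the inner product of the $\phi$-equation with $-\ol\phi$ and of the $\psi$-equation with $-\ol\psi$, summing, and integrating by parts (incompressibility eliminates the pressure), one obtains
\begin{align*}
\half\dt\bigl(\nh{\phi}^2+\nh{\psi}^2\bigr) &+ \alpha\bigl(\nH{\phi}^2+\nH{\psi}^2\bigr) - 2\beta\ip{\ol\phi}{\ol\psi} \\
&= -\ip{\ob{\psi}{v}+\ob{\wt}{\phi}}{\ol\phi} - \ip{\ob{\phi}{w}+\ob{\vt}{\psi}}{\ol\psi} \\
&\quad + \mu\ip{\oI(\phi_1)}{\ol\phi_1} + \mu\ip{\oI(\psi_1)}{\ol\psi_1}.
\end{align*}
Writing $\oI(\phi_1)=\phi_1-(\phi_1-\oI(\phi_1))$ extracts the coercive term $-\mu\nh{\phi_1}^2$ (and similarly for $\psi$); by \eqref{I2}, Cauchy--Schwarz, and Young, the residual is bounded by an expression of the form $O(\mu^2 h^2/\alpha)\nh{\phi_1}^2 + (\tfrac{\alpha}{4}+\mu\cIb h^2)\nH{\phi_1}^2$. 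Choosing $h^2\lesssim \alpha/\mu$ with appropriately small constants absorbs both $\nH{\phi_1}^2$ pieces into the LHS viscous term and leaves the residual $\nh{\phi_1}^2$ term dominated by $\half\mu\nh{\phi_1}^2$.

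The nonlinear terms are handled via \eqref{Brezis}--\eqref{Titi-Brezis}; for instance $|\ip{\ob{\wt}{\phi}}{\ol\phi}|\leq \cT\nh{\wt}\nh{\phi}\nH{\phi}(1+\ln(\nH{\phi}/(2\pi\nh{\phi})))^{1/2}$, and Young then yields a coefficient of the form $\C\nh{\wt}^2(1+\ln(\nH{\phi}/(2\pi\nh{\phi})))$ multiplying $\nh{\phi}^2$, with analogous bounds on the other three contributions. Because only the first components are damped, I invoke incompressibility $\pn{1}\phi_1=-\pn{2}\phi_2$ to recover $\nl{\pn{2}\phi_2}^2$ from $\nh{\phi_1}^2$, and pair the $\phi$ and $\psi$ feedbacks (using the coupling between the two equations) to dominate the remaining cross-derivatives. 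Assembling everything yields a differential inequality $\dt Y+\Psi(t)Y\leq 0$ with $Y:=\nh{\phi}^2+\nh{\psi}^2$ and
$$\Psi(t)\gtrsim \mu - \C\bigl(1+\nh{\vt}^2+\nh{\wt}^2\bigr)\bigl(1+\ln(\nH{\phi}/(2\pi\nh{\phi}))\bigr).$$
Substituting the uniform bounds on $\nh{\vt},\nh{\wt}$ from Proposition~\ref{prop upper bounds} (polynomial in $G$, with a logarithmic factor of order $\C G^4$), the stated scaling $\mu\sim G^{12}e^{2\C G^4}$ and $h\sim G^{-6}e^{-\C G^4}$ ensures $\liminf_{t\to\infty}\tfrac{1}{T}\int_t^{t+T}\Psi(s)\,ds>0$, whence Proposition~\ref{gronwall} delivers the exponential decay of $Y$.

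The main obstacle is reconciling three competing mechanisms consistently: (i) the $\nH{\phi_1}^2$ contribution from \eqref{I2} which, unlike in the Type 1 case, cannot be absorbed at the $L^2$ level and therefore forces the estimate to be conducted at $H^1$ from the outset; (ii) the fact that only the first components are damped, which requires recovering the missing derivatives through incompressibility and the coupling between $\phi$ and $\psi$; and (iii) the logarithmic factor in \eqref{Brezis}--\eqref{Titi-Brezis}, which upon substitution of the a priori bounds of Proposition~\ref{prop upper bounds} becomes of order $\C G^4$ and thereby generates the exponential $e^{\C G^4}$ dependence in $\mu$ (and the reciprocal $e^{-\C G^4}$ in $h$).
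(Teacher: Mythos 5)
Your overall shape is right (an $H^1$ energy estimate, absorbing the $h^2\nH{\cdot}$ piece of \eqref{I2} into the dissipation, Brezis--Gallouet-type bounds for the nonlinearity, then Gr\"onwall), but there are two genuine gaps. The first is in the nonlinear estimates. You bound, e.g., $\a{\ip{\ob{\wt}{\phi}}{\ol\phi}}\leq \cT\nh{\wt}\nh{\phi}\nH{\phi}(1+\ln(\cdot))^{1/2}$ and after Young obtain a coefficient of size $\sim\nh{\wt}^2(1+\ln(\cdot))\sim G^6e^{\C G^4}$ multiplying the \emph{full} $\nh{\phi}^2$. The only negative terms available against $\nh{\phi}^2$ are the feedback $-\mu\nh{\phi_1}^2$ (first component only) and the $O(\nua)$ damping obtained from the dissipation via Poincar\'e; neither controls a huge coefficient on $\nh{\phi_2}^2$. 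Your proposed repair does not work: incompressibility gives $\nl{\px\phi_1}=\nl{\py\phi_2}$, so $\nh{\phi_1}^2$ recovers $\nl{\py\phi_2}^2$ but \emph{not} $\nl{\px\phi_2}^2$, and the coupling between the $\phi$- and $\psi$-equations does not supply the missing derivative either. The paper's resolution is Lemma~\ref{lemma nonlinear term est 2}: a term-by-term decomposition of $\Ib{u}{v}{\ol w}$ with integration by parts and the divergence-free condition arranged so that every low-order error factor is a \emph{first-component} gradient ($\nh{u_1}$, $\nh{w_1}$, $\nh{v_1}$), with all other error factors being $\nH{\cdot}$ (absorbable into the dissipation). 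This structural lemma is the heart of the proof and is absent from your argument.

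The second gap is the bound on $\nh{\vt}$ and $\nh{\wt}$. Proposition~\ref{prop upper bounds} applies to solutions of \eqref{MHD vw}, i.e., to the reference solution only; it gives no a priori control of the data assimilation solution, yet $\nh{\wt}^2$ appears as a coefficient in your differential inequality. The paper closes this circularity with a continuity/bootstrap argument: assume $\sup_{t<T_0}(\nh{\vt}^2+\nh{\wt}^2)\leq 50\pi^2\nua^2G^2e^{\C G^4}$ on a maximal interval, run the $H^1$ estimate under that assumption (which also uses \eqref{sup lap bound} for $\nH{v},\nH{w}$), conclude $Y(t)\leq Y(0)$, and derive a contradiction with maximality of $T_0$. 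Without this step your $\Psi(t)$ is not known to satisfy the hypotheses of Proposition~\ref{gronwall}. Finally, note that the logarithm involves the unknown ratio $\nH{\phi_1}/\nh{\phi_1}$, which is not bounded by any a priori estimate; it must be absorbed by sacrificing a portion of $\nH{\phi}^2\geq 4\pi^2\arsT{\phi}\nh{\phi_1}^2$ and invoking Lemma~\ref{lemma log min}, rather than by ``substituting the a priori bounds'' as you describe.
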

 
 \begin{remark}
  Similar theorems hold for the cases of measurements on all variables and one Els\"asser variable (although not as direct corollaries, since the dynamical systems involved are slightly different).  However, in the case of measuring all variables we do not find much improvement in the restrictions on $h$ and $\mu$.
 \end{remark}
 ~
  
\subsection{Determining Interpolants}~ \label{sec:detinterp}\\

 In order to prove that there are finitely many (say N) determining modes, 
 one needs to show that if $(v^{(1)},w^{(1)})$ and $(v^{(2)},w^{(2)})$ 
 are different solutions of \eqref{MHD vw} with possibly different forcing 
 terms and initial data, then knowledge that $\nl{P_N(v^{(1)},w^{(1)}) - P_N(v^{(2)},w^{(2)})}\to~0$ is 
 sufficient to conclude that 
 $\nl{(v^{(1)},w^{(1)}) - (v^{(2)},w^{(2)})}\to 0$, 
 where $P_N$ denotes the projection onto
 the modes with magnitude at most $N$. In general, we replace $P_N$ 
 by a different operator, say $\oI$, and ask the question of
 whether the knowledge inherent in $\oI$ is ``determining".

 In the following theorems, we show that the data assimilation results
 we have obtained can be adapted to show that the interpolant operators,
 $\oI$, are determining. We do this by first generalizing the convergence results we developed in the previous theorems to allow for the evolution 
 equations of the reference solution and the data assimilation solution
 to have different forcing terms, which converge in $\sL$ as $t\to\infty$,
 at the cost of losing the exponential rate of convergence of the solutions.
 We also allow for the reference solution to be perturbed by a function 
 which decays in $\sL$.
 
 We illustrate the ideas for the algorithm studied in Theorem~\ref{thm all},
 i.e. with measurements taken on all variables and for $\oI$ satisfying 
 \eqref{I1}, but the results can be obtained for all the other cases as 
 well. So, we can show that operators which satisfy \eqref{I1} or 
 \eqref{I2} and use measurements on $(v,w),(v_1,w_1),$ or $v$,
 are determining in the sense of convergence in $\sL$ and $\sH$.
 
 \begin{theorem}\label{thm gen. DA}
 Let $\oI$ satisfy \eqref{I1} and let $(v,w)$ be a reference solution of
 \eqref{MHD vw}. Then if $\mu$ and $h$ satisfy the hypotheses of 
 Theorem~\ref{thm all}, and if 
 $\nl{\delta^{(1)}(t)},\nl{\delta^{(2)}(t)}\to 0$ and
 $\nl{\oI(\epsilon^{(1)}(t))},\nl{\oI(\epsilon^{(2)}(t))}\to 0$ 
 as $t\to\infty$, 
 there are unique $\vt,\wt$ and $\tilde{\PT}$ which satisfy the 
 following modified version of \eqref{MHD data all}:
 \begin{system}
  \begin{subequations}\label{gen MHD data all}
  \begin{equation}
  \pt \vt - \alpha \ol \vt + \beta \ol \wt + \ob \wt \vt = 
 	-\og \tilde{\PT} + f + \delta^{(1)} + \mu \oI(v + \epsilon^{(1)} -\vt)
  \end{equation}
  \begin{equation}
  \pt \wt - \alpha \ol \wt + \beta \ol \vt + \ob \vt \wt = 
 	-\og \tilde{\PT} + g + \delta^{(2)} + \mu \oI(w + \epsilon^{(2)}-\wt)
  \end{equation}
  \begin{equation}
  \odiv \vt = 0,
  \end{equation}
  \begin{equation}
  \odiv \wt = 0,
  \end{equation}
  \end{subequations}
  subject to the initial conditions $\vt(0)\equiv 0,\wt(0)\equiv 0$,
 \end{system}
 \noindent and furthermore, $\nl{v-\vt}, \nl{w-\wt}\to 0$ as $t\to\infty$.
 \end{theorem}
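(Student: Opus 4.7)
The plan is to mimic the proof of Theorem~\ref{thm all}, with the two additional source terms $\delta^{(i)}$ and $\mu\oI(\epsilon^{(i)})$ absorbed into the forcing function $\phi$ of the Generalized Gronwall inequality (Proposition~\ref{gronwall}). First I would set $V:=v-\vt$ and $W:=w-\wt$, subtract \eqref{gen MHD data all} from \eqref{MHD vw}, and obtain the error system
\begin{align*}
 \pt V - \alpha\ol V - \beta\ol W + \ob{\wt}{V} + \ob{W}{v}
 &= -\og q^{(1)} - \delta^{(1)} - \mu\oI V - \mu\oI\epsilon^{(1)}, \\
 \pt W - \alpha\ol W - \beta\ol V + \ob{\vt}{W} + \ob{V}{w}
 &= -\og q^{(2)} - \delta^{(2)} - \mu\oI W - \mu\oI\epsilon^{(2)},
\end{align*}
where $q^{(i)}$ collects the two pressure terms. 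Pairing the first equation in $\sL$ with $V$ and the second with $W$, summing, and using $\odiv V=\odiv W=0$ to annihilate the pressure gradients, the left-hand side produces exactly the same coercive structure as in Theorem~\ref{thm all}: the $\alpha$--dissipation, the mixed $\beta\ip{\og V}{\og W}$ term controlled by Cauchy--Schwarz, the nonlinear pieces $\ib{W}{v}{V}$ and $\ib{V}{w}{W}$ estimated via H\"older \eqref{Holder} and Ladyzhenskaya \eqref{Lady}, and the feedback term split as $-\mu\ip{\oI V}{V}=-\mu\nl{V}^2+\mu\ip{V-\oI V}{V}$ with the error controlled by \eqref{I1} and the condition $h\leq\cI^{-1}\nua^{1/2}\mu^{-1/2}$.

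Reproducing verbatim the chain of estimates from Theorem~\ref{thm all} then yields
\[
 \dt\bigl(\nl{V}^2+\nl{W}^2\bigr) + \psi\bigl(\nl{V}^2+\nl{W}^2\bigr)
 \leq -2\ip{\delta^{(1)}}{V} - 2\ip{\delta^{(2)}}{W}
 - 2\mu\ip{\oI\epsilon^{(1)}}{V} - 2\mu\ip{\oI\epsilon^{(2)}}{W},
\]
where $\psi>0$ is the (essentially constant) coefficient produced by the hypotheses $\mu>\tfrac{\pi^2(\cL^4+\nua^4)}{\nua}G^2$ and $h\leq\cI^{-1}\nua^{1/2}\mu^{-1/2}$, which in particular trivially verifies \eqref{gronwall inf}--\eqref{gronwall sup}. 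Applying Cauchy--Schwarz and Young \eqref{Young} with a small parameter $\eta\in(0,1)$ to each of the four new right-hand side terms gives
\[
 \dt Y + (1-\eta)\psi\,Y \leq \phi(t),\qquad Y:=\nl{V}^2+\nl{W}^2,
\]
with
\[
 \phi(t) \leq \tfrac{C}{\eta\psi}\bigl(\nl{\delta^{(1)}}^2+\nl{\delta^{(2)}}^2\bigr)
 +\tfrac{C\mu^2}{\eta\psi}\bigl(\nl{\oI\epsilon^{(1)}}^2+\nl{\oI\epsilon^{(2)}}^2\bigr).
\]
By hypothesis each of the four parenthetical terms tends to $0$ as $t\to\infty$, so $\phi(t)\to 0$. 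Proposition~\ref{gronwall} then gives $Y(t)\to 0$, which is the claimed conclusion $\nl{v-\vt}+\nl{w-\wt}\to 0$; note that since $\phi\not\equiv 0$ the convergence is no longer exponential.

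Global existence and uniqueness of the strong solution $(\vt,\wt,\tilde{\PT})$ to \eqref{gen MHD data all} follows from essentially the same Galerkin / fixed-point argument used for \eqref{MHD data all}, since the new data $\delta^{(i)}$ and $\oI\epsilon^{(i)}$ lie in $L^\infty_{\mathrm{loc}}([0,\infty);\sL)$ and enter only as bounded source terms. The main technical obstacle I anticipate is bookkeeping the constants so that, after the Young-inequality absorption, a strictly positive fraction $(1-\eta)\psi$ survives on the left; this forces $\eta<1$ and uses in an essential way that the hypothesis $\mu>\tfrac{\pi^2(\cL^4+\nua^4)}{\nua}G^2$ is strict, providing room for the absorption. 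It is also essential that the decay hypothesis is on $\nl{\oI(\epsilon^{(i)})}$ rather than on $\nl{\epsilon^{(i)}}$ itself, since the feedback term contributes $\mu^2\nl{\oI\epsilon^{(i)}}^2$ to $\phi$ and no further smoothing on $\epsilon^{(i)}$ is available from the equation.
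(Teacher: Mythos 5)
Your proposal is correct and follows essentially the same route as the paper: both treat the perturbations by estimating $\a{\ip{\delta^{(i)}}{\cdot}}$ and $\mu\a{\ip{\oI(\epsilon^{(i)})}{\cdot}}$ via Cauchy--Schwarz and Young, absorbing the resulting $\nl{\eta}^2$, $\nl{\zeta}^2$ pieces into the feedback term (the paper uses $\mu/4$ for each, matching your $(1-\eta)\psi$ bookkeeping), and then invoking Proposition~\ref{gronwall} with $\phi(t)=\tfrac{1}{\mu}(\nl{\delta^{(1)}}^2+\nl{\delta^{(2)}}^2)+\mu(\nl{\oI(\epsilon^{(1)})}^2+\nl{\oI(\epsilon^{(2)})}^2)\to 0$. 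No gaps.
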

 
 In the next theorem we illustrate the result that if an interpolant $\oI$
 satisfies the conditions for the generalized data assimilation theorem
 then $\oI$ is determining, for the case of the generalized version of
 Theorem~\ref{thm all}.
 Note that the projection onto the low modes, $P_N$, is an example of 
 an interpolant operator $\oI$ for which the theorem applies, provided 
 that $h:=\frac{1}{N}\lesssim G^{-1}$. Hence, the following theorem shows 
 that there are finitely many determining modes for instance.
  
 \begin{theorem}\label{thm det interp}
  Let $(v^{(1)},w^{(1)})$ and $(v^{(2)},w^{(2)})$ be solutions of \eqref{MHD vw} with forcing terms
  $f^{(1)},g^{(1)}$ and $f^{(2)},g^{(2)}$ respectively,
  and suppose that $\nl{f^{(1)}-f^{(2)}},\nl{g^{(1)}-g^{(2)}}\to 0$.\\
  Let $\oI$ satisfy \eqref{I1} where\\
  \vspace{-.2in}
  \begin{align*}
  & h < \frac{\alpha-\beta}{\pi\cI\sqrt{\cL^4+(\alpha-\beta)^4}}\, G^{-1},
  \text{ and }&\\
  & G:= 
   \frac{1}{\pi^2(\alpha-\beta)^2}\limsup_{t\to\infty}\left(\max\lbrace\nl{f^{(1)}(t)}, \nl{g^{(1)}(t)}\rbrace\right)
  	=\frac{1}{\pi^2\nua^2}\limsup_{t\to\infty}\left(\max\lbrace\nl{f^{(2)}(t)},
  	\nl{g^{(2)}(t)}\rbrace\right),
  \end{align*}
  \vspace{-.1in}\\
  and suppose that 
  $\nl{\oI(v^{(1)}(t)-v^{(2)}(t))},\nl{\oI{(w^{(1)}(t)-w^{(2)}(t))}}\to~0$
  as $t\to\infty$.\\
  Then $\nl{v^{(1)}(t)-v^{(2)}(t)},\nl{w^{(1)}(t)-w^{(2)}(t)}\to 0$
  as well.
 \end{theorem}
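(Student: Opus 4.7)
The plan is to derive Theorem~\ref{thm det interp} as an immediate consequence of Theorem~\ref{thm gen. DA} by recognizing that $(v^{(2)},w^{(2)})$ is itself a solution of the generalized data assimilation system~\eqref{gen MHD data all} with reference solution $(v^{(1)},w^{(1)})$. Concretely, starting from the evolution equation
\[
\pt v^{(2)} - \alpha\ol v^{(2)} - \beta\ol w^{(2)} + \ob{w^{(2)}}{v^{(2)}} = -\og\PT^{(2)} + f^{(2)},
\]
I would rewrite the right-hand side as
\[
-\og\PT^{(2)} + f^{(1)} + (f^{(2)}-f^{(1)}) + \mu\,\oI\bigl(v^{(1)}+\epsilon^{(1)}-v^{(2)}\bigr),
\]
by setting $\delta^{(1)} := f^{(2)}-f^{(1)}$ and $\epsilon^{(1)} := v^{(2)}-v^{(1)}$, so that the inserted feedback term vanishes identically. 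Carrying out the analogous bookkeeping on the $w^{(2)}$-equation with $\delta^{(2)} := g^{(2)}-g^{(1)}$ and $\epsilon^{(2)} := w^{(2)}-w^{(1)}$ exhibits $(v^{(2)},w^{(2)})$ as the pair $(\vt,\wt)$ in~\eqref{gen MHD data all}. The hypotheses then match precisely: $\nl{\delta^{(i)}}\to 0$ is the assumed convergence of the forcings, and $\nl{\oI(\epsilon^{(i)})}\to 0$ is the standing assumption $\nl{\oI(v^{(1)}-v^{(2)})},\nl{\oI(w^{(1)}-w^{(2)})}\to 0$. The $h$-bound stated in the theorem is exactly the Theorem~\ref{thm all} condition $h\leq \cI^{-1}\nua^{1/2}\mu^{-1/2}$ saturated at the threshold $\mu=\pi^2(\cL^4+\nua^4)\nua^{-1}G^2$, where the common Grashof number $G$ is well-defined because the assumption $\nl{f^{(1)}-f^{(2)}},\nl{g^{(1)}-g^{(2)}}\to 0$ forces the two limsups of the forcings to coincide. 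One then selects any $\mu$ strictly above this threshold compatible with the stated bound on $h$ and invokes the convergence conclusion of Theorem~\ref{thm gen. DA} to obtain $\nl{v^{(1)}-v^{(2)}},\nl{w^{(1)}-w^{(2)}}\to 0$.

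The main (and only mild) obstacle I anticipate is a mismatch in initial conditions: Theorem~\ref{thm gen. DA} is phrased with $\vt(0)\equiv 0 \equiv \wt(0)$, whereas $(v^{(2)}(0),w^{(2)}(0))$ are arbitrary. This should be harmless because the convergence part of the generalized data assimilation argument rests on applying the generalized Gr\"onwall inequality (Proposition~\ref{gronwall}) to $Y(t):=\nl{v^{(1)}-v^{(2)}}^2+\nl{w^{(1)}-w^{(2)}}^2$, whose coefficient $\psi(t)$ and driving term $\phi(t)$ depend only on uniform-in-time bounds for the reference solution $(v^{(1)},w^{(1)})$ (from Proposition~\ref{prop upper bounds}) together with $\delta^{(i)}$ and $\oI(\epsilon^{(i)})$; none of these quantities involve the initial data of $(\vt,\wt)$. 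I would therefore verify, when carrying out the proof of Theorem~\ref{thm gen. DA}, that its convergence conclusion holds for arbitrary admissible initial data, and then quote it with $(\vt(0),\wt(0))=(v^{(2)}(0),w^{(2)}(0))$. Once that point is established, the deduction of Theorem~\ref{thm det interp} is immediate and requires no new estimates beyond those already invested in Theorems~\ref{thm all} and~\ref{thm gen. DA}.
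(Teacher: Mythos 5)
Your proof is correct, but it takes a genuinely different (and more direct) route than the paper. The paper introduces an auxiliary data assimilation solution $(\vt,\wt)$ with zero initial data and feedback $\mu\oI(v^{(1)}-\vt)$, applies Theorem~\ref{thm all} to get $\nl{v^{(1)}-\vt},\nl{w^{(1)}-\wt}\to 0$, then rewrites the same equations so that $(\vt,\wt)$ is the solution of the generalized system \eqref{gen MHD data all} referenced to $(v^{(2)},w^{(2)})$ with $\delta^{(i)}$ the forcing differences and $\epsilon^{(i)}$ the solution differences, applies Theorem~\ref{thm gen. DA} to get $\nl{v^{(2)}-\vt},\nl{w^{(2)}-\wt}\to 0$, and concludes by the triangle inequality. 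You instead skip the auxiliary solution entirely and observe that $(v^{(2)},w^{(2)})$ \emph{itself} solves \eqref{gen MHD data all} with reference $(v^{(1)},w^{(1)})$, the inserted feedback term being identically zero; the damping still emerges because the proof of Theorem~\ref{thm gen. DA} splits $\mu\oI(v^{(1)}+\epsilon^{(1)}-\vt)$ into $\mu\oI(v^{(1)}-\vt)+\mu\oI(\epsilon^{(1)})$, yielding $-\mu\nl{\eta}^2$ plus a forcing term $\mu\nl{\oI(\epsilon^{(1)})}^2\to 0$. This buys a one-step argument with no triangle inequality and no appeal to the uniqueness clause needed to identify the two auxiliary solutions, but it costs you exactly the point you flagged: Theorem~\ref{thm gen. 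DA} is stated for $\vt(0)\equiv\wt(0)\equiv 0$, whereas $(v^{(2)}(0),w^{(2)}(0))$ is arbitrary, so you must note that the Gr\"onwall argument behind that theorem is insensitive to the initial value of $Y$ (which is true, and consistent with Remark~\ref{remark_init_data}); the paper's auxiliary-solution construction is designed precisely to stay within the zero-initial-data hypotheses as stated. Your handling of the choice of $\mu$ (e.g.\ $\mu=\nua\cI^{-2}h^{-2}$, so the stated bound on $h$ forces $\mu$ above the Theorem~\ref{thm all} threshold) matches the paper's.
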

 

\section{Proofs of the Results} \label{proofs}
Before we get to the main proofs, we first state the following bounds for the reference solution to the MHD system. 
Moreover, we prove \eqref{int bound}, which follows standard arguments from the Navier-Stokes theory (see, e.g., \cite{Constantin_Foias_1988,Temam_2001_Th_Num}).  
The proofs of \eqref{sup bound} and \eqref{sup lap bound} can be obtained by modifying the corresponding proofs from the Navier-Stokes theory in a similar way (see, e.g. \cite{Duvaut_Lions_1972,Sermange_Temam1983} for more details on \eqref{sup bound} and the appendix of \cite{Farhat_Lunasin_Titi_2016abridged} for \eqref{sup lap bound}).
\begin{proposition}[Upper Bounds on Solutions of the MHD]
	\label{prop upper bounds}
Let $(v,w)$ be a solution of \eqref{MHD vw}. Then there is a 
$t_0>0$ and constants $\cM>0$ and $\C=\tfrac{81}{4}\cL^8$ such that for all $t\geq t_0$ and any $T>0$,
\begin{equation}\label{int bound}
\int_{t}^{t+T}\left(\nh{v(s)}^2 + \nh{w(s)}^2\right)\, ds
\leq \cupa G^2,
\end{equation}
\begin{equation}\label{sup bound}
\nh{v(t)}^2 + \nh{w(t)}^2 \leq \cupb G^2 e^{\C G^4}.
\end{equation}
\begin{equation}\label{sup lap bound}
 \nH{v(t)}^2 + \nH{w(t)}^2 \leq \cupT G^2
 	\left(1+\left(1+G^2e^{\C G^4}\right)\left(1+e^{\C G^4}+G^4e^{\C G^4}\right)\right).
\end{equation}
\end{proposition}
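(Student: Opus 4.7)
The plan is to obtain the three bounds by successive energy estimates in $\sL$, $\sH$, and $\sSob{2}$, exploiting the symmetry of the Els\"asser formulation and repeatedly using Proposition~\ref{gronwall} with the preceding estimate to handle the nonlinearity.

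For \eqref{int bound}, I would test \eqref{MHD_v} with $v$ and \eqref{MHD_w} with $w$ and add. Because $\odiv w = \odiv v = 0$ we get $\ib{w}{v}{v} = \ib{v}{w}{w} = 0$, the pressure drops, and the cross-diffusion produces $-2\beta \ip{\og v}{\og w}$, which is controlled by $\beta(\nh{v}^2+\nh{w}^2)$, leaving the coercive term $(\alpha-\beta)(\nh{v}^2+\nh{w}^2)$. Using Young \eqref{Young} on the forcing and Poincar\'e \eqref{Poincare} to convert half of the coercive term into $2\pi^2\nua(\nl{v}^2+\nl{w}^2)$, one obtains a differential inequality of the form $\dt(\nl{v}^2+\nl{w}^2) + 2\pi^2\nua(\nl{v}^2+\nl{w}^2) + \nua(\nh{v}^2+\nh{w}^2) \le C\nua^{-1}(\nl{f}^2+\nl{g}^2)$. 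Gronwall produces an absorbing ball for $\nl{v}^2+\nl{w}^2$ in $\sL$, and then integrating the same inequality from $t$ to $t+T$ for $t\ge t_0$ (once the solution is in the ball) and invoking the definition of $G$ gives \eqref{int bound} with $\cupa G^2$.

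For \eqref{sup bound}, I would test with $-\ol v$ and $-\ol w$. The nonlinear contributions no longer vanish, but H\"older \eqref{Holder} plus Ladyzhenskaya \eqref{Lady} yields
\begin{equation*}
|\ib{w}{v}{\ol v}| \le \nL{w}\nL{\og v}\nH{v}
\le \cL \nl{w}^{\half}\nh{w}^{\half}\nh{v}^{\half}\nH{v}^{\tfrac{3}{2}},
\end{equation*}
and similarly for the symmetric term. Applying Young \eqref{Young} with exponents $(4,\tfrac{4}{3})$ to absorb $\nH{v}^2$ and $\nH{w}^2$ into the coercive $(\alpha-\beta)(\nH{v}^2+\nH{w}^2)$ leaves
\begin{equation*}
\dt\!\left(\nh{v}^2+\nh{w}^2\right) + \nua\!\left(\nH{v}^2+\nH{w}^2\right)
\le \psi(t)\left(\nh{v}^2+\nh{w}^2\right) + \tfrac{1}{\nua}(\nl{f}^2+\nl{g}^2),
\end{equation*}
where $\psi(t) \lesssim \nua^{-3}\cL^4(\nh{v}^2+\nh{w}^2)$ (after using the $\sL$ absorbing ball to control the $\nl{\,\cdot\,}$ factors). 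The uniform Gronwall lemma, driven by the time-integral bound \eqref{int bound} once $t \ge t_0$, converts $\int_t^{t+T}\psi \lesssim \C G^4$ into the exponential factor $e^{\C G^4}$, with the constant $\C = \tfrac{81}{4}\cL^8$ emerging from the explicit Young's-inequality constants; this gives \eqref{sup bound}.

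Finally, for \eqref{sup lap bound}, I would test with $\ol^2 v$ and $\ol^2 w$ and repeat the procedure one level higher. The nonlinear contributions of the form $|\ip{\ob{w}{v}}{\ol^2 v}|$ can, after an integration by parts, be controlled using H\"older and Ladyzhenskaya by $\nh{w}\nh{v}$-type factors combined with $\nH{v}$, and then by Young absorbed into $\nua(\|\og\ol v\|^2+\|\og\ol w\|^2)$. The remaining bulk term is proportional to the $\sH$ bound already established, so applying Proposition~\ref{gronwall} again, now with $\psi$ controlled by \eqref{sup bound} and the time integral of $\nH{\cdot}^2$ (which follows from integrating the $\sH$ estimate), yields the nested structure $1+(1+G^2e^{\C G^4})(1+e^{\C G^4}+G^4e^{\C G^4})$ displayed in \eqref{sup lap bound}. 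The main obstacle is step two: tracking the absolute constants through the $(4,\tfrac{4}{3})$ Young's inequality so that the Gronwall exponent comes out precisely to $\C = \tfrac{81}{4}\cL^8$ rather than some larger constant, since this exponent is quoted by name and is reused throughout the later theorems.
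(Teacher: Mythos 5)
Your treatment of \eqref{int bound} coincides with the paper's: the appendix proof tests \eqref{MHD_v} with $v$ and \eqref{MHD_w} with $w$, absorbs the cross-diffusion term $2\beta\ip{\og v}{\og w}$ into $\alpha(\nh{v}^2+\nh{w}^2)$ to leave the coercive factor $\nua$, handles the forcing with Young and Poincar\'e, runs Gr\"onwall to land in the $\sL$ absorbing ball, and then integrates over $[t,t+T]$ --- exactly your first paragraph. Be aware, however, that the paper proves \emph{only} \eqref{int bound}; for \eqref{sup bound} and \eqref{sup lap bound} it cites Duvaut--Lions, Sermange--Temam, and the appendix of Farhat--Lunasin--Titi rather than giving an argument. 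Your second and third paragraphs therefore go beyond what the paper supplies, and they follow the standard higher-order energy route (the one genuine complication relative to 2D Navier--Stokes being that $\ib{w}{v}{\ol v}$ does not vanish in the Els\"asser formulation, so the enstrophy orthogonality is unavailable and one really does need the Ladyzhenskaya/uniform-Gr\"onwall machinery you describe).

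Two caveats on those sketches. First, the constant: running your H\"older--Ladyzhenskaya--Young $(4,\tfrac{4}{3})$ computation literally produces a Gr\"onwall exponent of the form $(\mathrm{const})\,\cL^{4}G^{4}$, not $\tfrac{81}{4}\cL^{8}G^{4}$; you correctly flag this as the main obstacle, but as written the sketch does not close the gap to the quoted $\C=\tfrac{81}{4}\cL^{8}$, which the later theorems reuse by name, so that step is genuinely incomplete rather than merely tedious bookkeeping. Second, for \eqref{sup lap bound} you invoke Proposition~\ref{gronwall}, but that lemma concludes $Y(t)\to 0$ under a sign condition on the averages of $\psi$ and is not the right tool for producing a uniform-in-time upper bound; what is needed there (as in your step for \eqref{sup bound}) is again the uniform Gr\"onwall lemma, fed by the time integral of $\nH{v}^2+\nH{w}^2$ obtained by integrating the $\sH$ inequality. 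With those two repairs your outline is the expected one.
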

\begin{proof}[Proof of \eqref{int bound}]
	See the appendix.
\end{proof}

\subsection{Proofs of $\sL$ Convergence Results with Type 1 Interpolants}~

Before we get to the proofs of the main theorems, we first collect the various estimates needed for the bilinear term in the following lemma.
\begin{lemma}\label{lemma nonlinear term est}
Let $u,v,w\in \sH$ be divergence free. Then the following inequalities hold for 
any $\epsilon,\delta>0:$
 \begin{multline}\label{nonlin est 2}
  (a)\quad\a{\Ib u v w}\leq \frac{\cL\delta}{4}\nh{u}^2 
  + \half[\epsilon]\nh{w}^2 +  \frac{\cL\delta}{4}\nh{v}^2\nl{u}^2 
  + \frac{\cL^2}{8\epsilon\delta^2}\nh{v}^2\nl{w}^2,
 \end{multline} 
 \hspace{.5\textwidth} or
 \begin{multline}\label{nonlin est 2.1}
   \,\,\quad\quad\a{\Ib u v w}\leq \frac{\cL\delta}{4}\nh{w}^2 
   + \half[\epsilon]\nh{u}^2 +  \frac{\cL\delta}{4}\nh{v}^2\nl{w}^2 
   + \frac{\cL^2}{8\epsilon\delta^2}\nh{v}^2\nl{u}^2,
  \end{multline}
 \begin{multline}\label{nonlin est 1}
  (b)\quad\a{\Ib u v w}\leq c\delta\nh{u}^2 + c\delta\nh{w}^2 +
  \frac{c}{\delta}\nh{v}^2\left(\nl{u_1}^2 + \nl{w_1}^2\right)\\
  +\frac{c}{\delta}\nh{v}^2\left(1+\ln\frac{\nh{u_1}}{2\pi
		\nl{u_1}}\right)\nl{u_1}^2
  +\frac{c}{\delta}\nh{v}^2\left(1+\ln\frac{\nh{w_1}}{2\pi
		\nl{w_1}}\right)\nl{w_1}^2.
 \end{multline}
\end{lemma}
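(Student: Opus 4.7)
The lemma splits into two essentially independent parts, (a) (with its variant \eqref{nonlin est 2.1}) and (b), and I would treat them by distinct strategies. Both are classical 2D bilinear estimates for divergence-free fields, but only (b) is delicate in that it hinges on the Brezis--Gallouet--Titi inequality and the divergence-free conditions to extract only the first-component norms $\nl{u_1}$ and $\nl{w_1}$.

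For (a), I would start with H\"older's inequality \eqref{Holder},
$\a{\Ib{u}{v}{w}} \leq \nL{u}\,\nh{v}\,\nL{w},$
and then invoke Ladyzhenskaya's inequality \eqref{Lady} on both $\nL{u}$ and $\nL{w}$, giving the intermediate bound
$\a{\Ib{u}{v}{w}} \leq \cL\,\nh{v}\,\nl{u}^{1/2}\nh{u}^{1/2}\nl{w}^{1/2}\nh{w}^{1/2}.$
All that remains is a single application of Young's inequality with four factors of common exponent four: write the right-hand side as $a_1 a_2 a_3 a_4$ with $a_1 = A\nh{u}^{1/2}$, $a_2 = B\nh{w}^{1/2}$, $a_3 = C\nh{v}^{1/2}\nl{u}^{1/2}$, $a_4 = D\nh{v}^{1/2}\nl{w}^{1/2}$, and choose positive weights $A,B,C,D$ with $ABCD = \cL$ so that the individual $a_i^4/4$ contributions match the prescribed coefficients $\cL\delta/4$, $\epsilon/2$, $\cL\delta/4$, $\cL^2/(8\epsilon\delta^2)$. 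A quick check that $A^4B^4C^4D^4 = \cL^4$ confirms consistency. The variant \eqref{nonlin est 2.1} is obtained by the symmetric reassignment of weights (swapping the roles assigned to the $u$- and $w$-factors), so only one additional bookkeeping pass is required.

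For (b), I would first expand $\Ib{u}{v}{w} = \sum_{j,k}\I{u_j\pn{j} v_k w_k}$ and then use the divergence-free identities $\pn{2} u_2 = -\pn{1} u_1$ and $\pn{2} w_2 = -\pn{1} w_1$, together with integration by parts in the appropriate coordinate, to rewrite each $(j,k)$-summand so that the second components $u_2$, $w_2$ appear only in factors that can be absorbed by full-gradient norms $\nh{u}$ or $\nh{w}$, while the remaining $L^2$ factors become $\nl{u_1}$ or $\nl{w_1}$. To each rearranged trilinear integral I would apply the Brezis--Gallouet--Titi inequality \eqref{Brezis}, taking the $L^2$-with-log factor to be $u_1$ or $w_1$, which produces bounds of the form $\cB\,\nh{u}\,\nh{v}\,\nl{u_1}\bigl(1+\ln(\nh{u_1}/(2\pi\nl{u_1}))\bigr)^{1/2}$ and analogously with $w_1$. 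A Young's inequality with parameter $\delta$ applied to each such product then splits it into $c\delta\nh{u}^2 + (c/\delta)\nh{v}^2\nl{u_1}^2\bigl(1+\ln(\cdot)\bigr)$, which is the desired form. The non-logarithmic summand $(c/\delta)\nh{v}^2(\nl{u_1}^2+\nl{w_1}^2)$ collects the pieces of the decomposition for which a plain H\"older--Sobolev estimate already suffices, without need of the $(1+\ln)^{1/2}$ enhancement.

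The main obstacle is the combinatorial bookkeeping in (b): each of the four $(j,k)$-pieces must be massaged by divergence-freeness and integration by parts so that its eventual $L^2$-with-log factor is $u_1$ or $w_1$ rather than $u_2$ or $w_2$. The anti-symmetry $\Ib{u}{v}{w} = -\Ib{u}{w}{v}$ (valid for div-free $u$) is the structural reason both $\nl{u_1}$ and $\nl{w_1}$ appear symmetrically on the right, since it lets us freely interchange the first and third slots when applying \eqref{Brezis}. By contrast, the arguments for (a) and its variant are direct and require only careful constant-tracking in Young's inequality.
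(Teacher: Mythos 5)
Your proposal is correct and follows essentially the same route as the paper: for (a), H\"older followed by Ladyzhenskaya and Young (the paper applies Young twice in succession rather than once with four factors of exponent four, but your weight check $A^4B^4C^4D^4=\cL^4$ confirms the constants come out identically), and for (b), the componentwise decomposition, the divergence-free/integration-by-parts rearrangement of the $i=j=2$ term, and the Brezis--Gallouet--Titi inequality \eqref{Brezis} with $u_1$ or $w_1$ as the logarithmic $L^2$ factor, with the $(1,1)$ term handled by a plain Ladyzhenskaya estimate producing the non-logarithmic summand. One peripheral quibble: the antisymmetry you invoke, $\Ib{u}{v}{w}=-\Ib{u}{w}{v}$ for divergence-free $u$, interchanges the second and third slots (the differentiated and undifferentiated fields), not the first and third as you describe, though this aside plays no role in the estimates themselves.
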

\begin{proof}
	See the appendix.
\end{proof}

The following lemma will be used in our analyses of the algorithms using measurements on only the first components of the reference solutions, where we will need to make use of \eqref{Brezis}, \eqref{Titi-Brezis}, or \eqref{nonlin est 1}. The proof is elementary, and therefore omitted.
\begin{lemma}\label{lemma log min}
Let $\phi(r) = r - \gamma(1+\ln(r)),$ for some $\gamma>0.$ Then $\forall r\geq1$,
 \begin{equation*}
 \phi(r)\geq -\gamma\ln(\gamma).
 \end{equation*}
\end{lemma}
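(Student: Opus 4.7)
The plan is to reduce the inequality to a routine one-variable calculus exercise by finding the global minimum of $\phi$ on $[1,\infty)$ and then doing a short case analysis on the size of $\gamma$.

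First I would compute $\phi'(r) = 1 - \gamma/r$ and note that on $(0,\infty)$ this vanishes only at $r=\gamma$, is negative for $r<\gamma$, and positive for $r>\gamma$. Hence $\phi$ has a unique global minimum on $(0,\infty)$ at $r=\gamma$, and a direct substitution gives $\phi(\gamma)=\gamma-\gamma(1+\ln\gamma)=-\gamma\ln\gamma$. This immediately gives the desired bound when the critical point lies in the region of interest.

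Next I would split into two cases. Case 1: $\gamma\geq 1$. Then $\gamma\in[1,\infty)$, so the minimum of $\phi$ on $[1,\infty)$ is attained at $r=\gamma$ and equals $-\gamma\ln\gamma$, which proves the claim. Case 2: $0<\gamma<1$. Then for every $r\geq 1$ we have $r>\gamma$, so $\phi'(r)>0$ and $\phi$ is strictly increasing on $[1,\infty)$; its minimum there is $\phi(1)=1-\gamma$. It remains to verify $1-\gamma\geq -\gamma\ln\gamma$, i.e.\ $\gamma(1-\ln\gamma)\leq 1$. Setting $\psi(\gamma):=\gamma(1-\ln\gamma)$ on $(0,1]$, one computes $\psi'(\gamma)=-\ln\gamma>0$ on $(0,1)$, so $\psi$ is strictly increasing there and $\psi(\gamma)\leq \psi(1)=1$, as required.

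There is no real obstacle here; the only thing one has to be a bit careful about is that the unconstrained minimum of $\phi$ can fall outside $[1,\infty)$, which is exactly why the case $\gamma<1$ needs its own (equally elementary) argument. This matches the author's comment that the proof is elementary and is omitted.
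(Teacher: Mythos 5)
Your proof is correct. The paper omits the proof entirely, remarking only that it is elementary, and your calculus argument is exactly the kind of routine verification the authors had in mind: locate the critical point of $\phi$ at $r=\gamma$ and evaluate $\phi(\gamma)=-\gamma\ln\gamma$.

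One small simplification worth noting: your case analysis is unnecessary. Since $\phi''(r)=\gamma/r^2>0$, the function $\phi$ is convex on $(0,\infty)$ with its unique global minimum at $r=\gamma$, so $\phi(r)\geq\phi(\gamma)=-\gamma\ln\gamma$ holds for \emph{all} $r>0$, not merely $r\geq1$. The restriction $r\geq1$ in the statement is therefore not needed for the conclusion (it is there only because in the applications the argument of $\phi$ is a ratio that Poincar\'e's inequality guarantees is at least $1$). Your Case~2, while correctly executed, proves a slightly stronger statement ($\phi(1)=1-\gamma\geq-\gamma\ln\gamma$) than is required, since even for $\gamma<1$ the global bound already covers $r\geq1$.
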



\subsubsection*[All Components L2]{
}
\begin{proof}[\bf Proof of Theorem \ref{thm all}]~\\
 Let $\eta = v - \vt$ and $\zeta = w - \wt.$\\
 
 Then $\eta$ satisfies:
  $$ \pt\eta - \alpha\ol\eta - \beta\ol\zeta + \ob{w}{v} - \ob{\wt}{\vt} 
     = -\og(\PT - \tilde{\PT}) - \mu\oI(\eta). $$
  Using the fact that 
  $\ob{w}{v} - \ob{\wt}{\vt} = \ob{\zeta}{v} + \ob{\wt}{\eta}$
  we write:
  $$ \pt\eta - \alpha\ol\eta - \beta\ol\zeta + \ob{\zeta}{v} + \ob{\wt}{\eta} 
      = -\og(\PT - \tilde{\PT}) - \mu\oI(\eta). $$
  Taking the inner product with $\eta$ we obtain:
  $$ \half\dt\nl\eta^2 + \alpha\nh\eta^2 + \beta\ip{\og\zeta}{\og\eta} 
 	+ \ib{\zeta}{v}{\eta}
    = -\ip{\og(\PT-\tilde{\PT})}{\eta} - \mu\ip{\oI(\eta)}{\eta}.
  $$
  Now, by the divergence free condition,
  $$ -\ip{\og(\PT-\tilde{\PT})}{\eta} := -\Ip{\og(\PT-\tilde{\PT})}{\eta} 
     =\Ip{(\PT-\tilde{\PT})}{(\og\cdot\eta)} = 0.
  $$
  By applying Cauchy-Schwarz inequality and \eqref{Young},
  $$
    \a{\beta\ip{\og\zeta}{\og\eta}} 
 	\leq \half[\beta]\nh{\eta}^2 + \half[\beta]\nh{\zeta}^2,
  $$
  and by rewriting 
  $\ip{\oI(\eta)}{\eta} 
 	= \ip{\oI(\eta)-\eta}{\eta} + \ip{\eta}{\eta},$
  we have:
  $$ - \mu\ip{\oI(\eta)}{\eta} = 
     -\mu\ip{\oI(\eta)-\eta}{\eta} - \mu\nl{\eta}^2.
  $$
  Thus, we obtain:
  \begin{align*}
    \half\dt\nl\eta^2 + \left(\alpha-\half[\beta]\right)\nh\eta^2 
 		- \half[\beta]\nh\zeta^2 + \mu\nl{\eta}^2
     &\leq -\ib{\zeta}{v}{\eta} - \mu\ip{\oI(\eta)-\eta}{\eta}&\\
     &\leq \a{\ib{\zeta}{v}{\eta}} + \mu\a{\ip{\oI(\eta)-\eta}{\eta}}&\\
     &\leq \a{\ib{\zeta}{v}{\eta}} + \mu\nl{\oI(\eta)-\eta}\nl{\eta}&\\
     &\leq \a{\ib{\zeta}{v}{\eta}} + \mu\cI h\nh{\eta}\nl{\eta}&\\
     &\leq \a{\ib{\zeta}{v}{\eta}} + \half[\mu\cI^2 h^2]\nh{\eta}^2 
 		+ \half[\mu]\nl{\eta}^2,&\\
  \end{align*}
  where in the last three lines we used Cauchy-Schwarz inequality, the definition of $\oI$, 
 and Young's inequality. This leaves us with:
  \begin{equation*}
   \half\dt\nl\eta^2 + \left(\alpha-\half[\beta]\right)\nh\eta^2 
 	- \half[\beta]\nh\zeta^2 
   + \half[\mu]\nl{\eta}^2 \leq \a{\ib{\zeta}{v}{\eta}} 
 	+ \half[\mu\cI^2 h^2]\nh{\eta}^2.
  \end{equation*}
  Proceeding the same way for $\zeta$, we have the following
  equations:
  \begin{equation}\label{before non all e}
   \half\dt\nl\eta^2 + \left(\alpha-\half[\beta]-\half[\mu\cI^2 h^2]\right)\nh\eta^2 
   - \half[\beta]\nh\zeta^2 + \half[\mu]\nl{\eta}^2 \leq \a{\Ib{\zeta}{v}{\eta}},
  \end{equation}
  \begin{equation}\label{before non all z}
   \half\dt\nl\zeta^2 + \left(\alpha-\half[\beta]-\half[\mu\cI^2 h^2]\right)\nh\zeta^2 
   - \half[\beta]\nh\eta^2 + \half[\mu]\nl{\zeta}^2 \leq \a{\Ib{\eta}{w}{\zeta}}.
  \end{equation}
  We estimate the integrals in these equations using \eqref{nonlin est 2},
  with $\epsilon = \half[\alpha-\beta]$ and $\delta = \frac{\alpha-\beta}{\cL}$, and obtain
  \begin{multline}\label{after non all e}
   \half\dt\nl\eta^2 + \left(\alpha-\half[\beta]-\half[\mu\cI^2 h^2]-\frac{\alpha-\beta}{4}\right)\nh\eta^2 
   +\left(-\half[\beta]-\frac{\alpha-\beta}{4}\right)\nh\zeta^2 \\
   + \left(\half[\mu]-\frac{\cL^4}{4(\alpha-\beta)^3}\nh{v}^2\right)\nl{\eta}^2 
   + \left(-\frac{\alpha-\beta}{4}\nh{v}^2\right)\nl{\zeta}^2 \leq 0,
  \end{multline}
  \begin{multline}\label{after non all z}
   \half\dt\nl\zeta^2 + \left(\alpha-\half[\beta]-\half[\mu\cI^2 h^2]-\frac{\alpha-\beta}{4}\right)\nh\zeta^2 
   +\left(-\half[\beta]-\frac{\alpha-\beta}{4}\right)\nh\eta^2 \\
   + \left(\half[\mu]-\frac{\cL^4}{4(\alpha-\beta)^3}\nh{w}^2\right)\nl{\zeta}^2 
   + \left(-\frac{\alpha-\beta}{4}\nh{w}^2\right)\nl{\eta}^2 \leq 0.
  \end{multline}
  Then, adding \eqref{after non all e} and \eqref{after non all z}, we obtain
  \begin{multline}\label{final ineq all}
   \half\dt\nl\eta^2 + \half\dt\nl\zeta^2 
   + \left(\half[\alpha-\beta]-\half[\mu\cI^2 h^2]\right) \left(\nh\eta^2+\nh\zeta^2\right)\\
   + \left[\half[\mu]-\left(\frac{\cL^4}{4(\alpha-\beta)^3}+\frac{\alpha-\beta}{4}\right)\avwf\right]
     \left(\nl{\eta}^2+\nl{\zeta}^2\right) \leq 0.
  \end{multline}
  Thus, defining $Y(t) = \nl{\eta(t)}^2+\nl{\zeta(t)}^2$ and $Z(t) = \nh{v(t)}^2+\nh{w(t)}^2$, we have
  \begin{equation}
   \dt Y + \psi Y \leq 0,
  \end{equation}
  where $\psi(t):=\mu-\left(\frac{\cL^4+(\alpha-\beta)^4}{2(\alpha-\beta)^3}\right)Z(t)$, provided that 
  $\mu\cI^2 h^2\leq\alpha-\beta$.\\
  By Proposition \ref{prop upper bounds} with $T=\frac{1}{\pi^2\nua}$, 
  $\psi$ satisfies \eqref{gronwall sup} and if
  $$ \mu - \frac{\cL^4+(\alpha-\beta)^4}{2T(\alpha-\beta)^3}\cupa G^{2} > 0
     \iff \mu > \frac{\pi^2(\cL^4+(\alpha-\beta)^4)}{\alpha-\beta}G^{2},$$
  then $\psi$ also satisfies \eqref{gronwall inf}, so we can apply Proposition \ref{gronwall}
  to $Y$ and conclude that $(\vt,\wt)$ converges exponentially in time to $(v,w)$.\\
  The requirement on h is 
  $$ h < \left(\frac{\nub}{\cI^2\mu}\right)^{1/2} < \frac{\alpha-\beta}{\pi\cI\sqrt{\cL^4+(\alpha-\beta)^4}}\, G^{-1}, $$
  so $h\sim G^{-1}$.\\
\end{proof}

\subsubsection*[1st Component L2]{
}
\begin{proof}[\bf Proof of Theorem \ref{thm 1st}]~\\
 Let $\eta = v - \vt$ and $\zeta = w - \wt.$ Then $\eta$ satisfies:
 $$ \pt\eta - \alpha\ol\eta - \beta\ol\zeta + \ob{w}{v} - \ob{\wt}{\vt} 
    = -\og(\PT - \tilde{\PT}) - \mu\oI(\eta_1)e_1. $$
 Using the fact that 
 $\ob{w}{v} - \ob{\wt}{\vt} = \ob{\zeta}{v} + \ob{\wt}{\eta}$
 we write:
 $$ \pt\eta - \alpha\ol\eta - \beta\ol\zeta + \ob{\zeta}{v} + \ob{\wt}{\eta} 
     = -\og(\PT - \tilde{\PT}) - \mu\oI(\eta_1)e_1. $$
 Taking the inner product with $\eta$ we obtain:
 $$ \half\dt\nl\eta^2 + \alpha\nh\eta^2 + \beta\ip{\og\zeta}{\og\eta} 
	+ \ib{\zeta}{v}{\eta}
   = -\ip{\og(\PT-\tilde{\PT})}{\eta} - \mu\ip{\oI(\eta_1)}{\eta_1}.
 $$
 Now, by the divergence free condition, we have:
 $$ -\ip{\og(\PT-\tilde{\PT})}{\eta} := -\Ip{\og(\PT-\tilde{\PT})}{\eta} 
    =\Ip{(\PT-\tilde{\PT})}{(\og\cdot\eta)} = 0.
 $$
 By applying Cauchy-Schwarz inequality and \eqref{Young},
 $$
   \a{\beta\ip{\og\zeta}{\og\eta}} 
	\leq \half[\beta]\nh{\eta}^2 + \half[\beta]\nh{\zeta}^2,
 $$
 and by rewriting 
 $\ip{\oI(\eta_1)}{\eta_1} 
	= \ip{\oI(\eta_1)-\eta_1}{\eta_1} + \ip{\eta_1}{\eta_1},$
 we have:
 $$ - \mu\ip{\oI(\eta_1)}{\eta_1} = 
    -\mu\ip{\oI(\eta_1)-\eta_1}{\eta_1} - \mu\nl{\eta_1}^2.
 $$
 Thus, we obtain:
 \begin{align*}
   \half\dt\nl\eta^2 + \left(\alpha-\half[\beta]\right)\nh\eta^2 
		- \half[\beta]\nh\zeta^2 + \mu\nl{\eta_1}^2
    &\leq -\ib{\zeta}{v}{\eta} - \mu\ip{\oI(\eta_1)-\eta_1}{\eta_1}&\\
    &\leq \a{\ib{\zeta}{v}{\eta}} + \mu\a{\ip{\oI(\eta_1)-\eta_1}{\eta_1}}&\\
    &\leq \a{\ib{\zeta}{v}{\eta}} + \mu\nl{\oI(\eta_1)-\eta_1}\nl{\eta_1}&\\
    &\leq \a{\ib{\zeta}{v}{\eta}} + \mu\cI h\nh{\eta_1}\nl{\eta_1}&\\
    &\leq \a{\ib{\zeta}{v}{\eta}} + \half[\mu\cI^2 h^2]\nh{\eta_1}^2 
		+ \half[\mu]\nl{\eta_1}^2,&\\
 \end{align*}
 where in the last three lines we used the Cauchy-Schwarz inequality, the definition of $\oI$, 
and Young's inequality. This leaves us with:
 \begin{equation*}
  \half\dt\nl\eta^2 + \left(\alpha-\half[\beta]\right)\nh\eta^2 
	- \half[\beta]\nh\zeta^2 
  + \half[\mu]\nl{\eta_1}^2 \leq \a{\ib{\zeta}{v}{\eta}} 
	+ \half[\mu\cI^2 h^2]\nh{\eta_1}^2,
 \end{equation*}
 or equivalently,
 \begin{equation}\label{before nonlinear}
  \half\dt\nl\eta^2 + \left(\alpha-\half[\beta]
	-\half[\mu\cI^2 h^2]\right)\nh\eta^2 
  - \half[\beta]\nh\zeta^2 + \half[\mu]\nl{\eta_1}^2 
  \leq \a{\Ib{\zeta}{v}{\eta}}.
 \end{equation}
 Now we apply Lemma \ref{lemma nonlinear term est} to estimate the 
 nonlinear term with  \eqref{nonlin est 1}, yielding:
 \begin{multline}\label{eta eq est}
  \half\dt\nl\eta^2 + \left(\alpha-\half[\beta]-\half[\mu\cI^2 h^2] 
	- c\delta\right)\nh\eta^2 
    +\left(-\half[\beta] - c\delta\right)\nh\zeta^2\\ +
    \left[
      \half[\mu] - \frac{c}{\delta}\nh{v}^2 - \frac{c}{\delta}\nh{v}^2 \ag{\ar{\eta}}
    \right]\nl{\eta_1}^2\\ +
    \left[ 
      -\frac{c}{\delta}\nh{v}^2 - \frac{c}{\delta}\nh{v}^2 \ag{\ar{\zeta}}
    \right]\nl{\zeta_1}^2
    \leq 0.
 \end{multline}
 Proceeding similarly with $\zeta$ we obtain:
 \begin{multline}\label{zeta eq est}
   \half\dt\nl\zeta^2 + \left(\alpha-\half[\beta]-\half[\mu\cI^2 h^2]-c\delta\right)\nh\zeta^2 
     +\left(-\half[\beta]-c\delta\right)\nh\eta^2\\+
     \left[
       \half[\mu] - \frac{c}{\delta}\nh{w}^2 - \frac{c}{\delta}\nh{w}^2 \ag{\ar{\zeta}}
     \right]\nl{\zeta_1}^2 \\+
     \left[ 
       -\frac{c}{\delta}\nh{w}^2 - \frac{c}{\delta}\nh{w}^2 \ag{\ar{\eta}}
     \right]\nl{\eta_1}^2
     \leq 0.
  \end{multline}
Now, adding \eqref{eta eq est} and \eqref{zeta eq est} 
and defining $Z(t)=\nh{v(t)}^2 + \nh{w(t)}^2$,
\begin{multline}\label{comb est}
  \half\dt\nl\eta^2 + \half\dt\nl\zeta^2 + 
   \left(\alpha-\beta-\half[\mu\cI^2 h^2]-2c\delta\right)\nh\eta^2 +
   \left(\alpha-\beta-\half[\mu\cI^2 h^2]-2c\delta\right)\nh\zeta^2\\ +
   \left[
     \half[\mu] - \frac{c}{\delta}\avw - \frac{c}{\delta}\avw \ag{\ar{\eta}}
   \right]\nl{\eta_1}^2\\ +
   \left[ 
     \half[\mu] - \frac{c}{\delta}\avw - \frac{c}{\delta}\avw \ag{\ar{\zeta}}
   \right]\nl{\zeta_1}^2
    \leq 0.
 \end{multline}
 
 Since $\alpha>\beta$, 
 $$\gamma:=(\alpha-\beta)-\half[\mu\cI^2 h^2]-2c\delta \geq \frac{(\alpha-\beta)}{4} > 0,$$
 provided that 
 $h\leq(\alpha-\beta)^{\half}\cI^{-1}\mu^{-\half}$ 
 and by choosing 
 $\delta = \frac{(\alpha-\beta)}{8c}.$\\
 We want to apply Lemma \ref{lemma log min} to the logarithmic terms in \eqref{comb est}.
 To this end note that by \eqref{Poincare}, $\ar{\eta}\geq 1$, so
 $ \ln\ars{\eta} \geq \ln\ar{\eta}.$ 
 Next, we write
 $$ \gamma\nh\eta^2 \geq \half[\gamma]\nh\eta^2 +
  \half[4\pi^2\gamma]\ars{\eta}\nl{\eta_1}^2,$$
 and consider
 \begin{multline*}
   2\pi^2\gamma\ars{\eta}\nl{\eta_1}^2
   - \frac{c}{\delta}\avw \ag{\ars{\eta}}\nl{\eta_1}^2\\
   =2\pi^2\gamma\left(
     \ars{\eta} - \frac{c}{2\pi^2\gamma\delta}\avw\ag{\ars{\eta}}
    \right)\nl{\eta_1}^2.
 \end{multline*}
 By Lemma \ref{lemma log min},
 \begin{equation}\label{log term}
  \ars{\eta} - \frac{c}{2\pi^2\gamma\delta}\avw\ag{\ars{\eta}}
  \geq -\frac{c}{2\pi^2\gamma\delta}\avw\ag[]{\frac{c}{2\pi^2\gamma\delta}\avw}.
 \end{equation}
 Hence, using \eqref{log term} and defining $Y(t) = \nl{\eta(t)}^2 + \nl{\zeta(t)}^2$, 
 we rewrite \eqref{comb est} as
 \begin{equation*}
  \half\dt Y + \half[\gamma]\left(\nh\eta^2 +\nh\zeta^2\right)\\ +
   \left[
     \half[\mu] - \frac{c}{\delta}Z\ag{\frac{c}{2\pi^2\gamma\delta}Z}
   \right]\left( \nl{\eta_1}^2 + \nl{\zeta_1}^2\right) \leq 0.
 \end{equation*}
 By \eqref{Poincare}, 
 $$ \nh\eta^2 +\nh\zeta^2 \geq 4\pi^2\left(\nl\eta^2 +\nl\zeta^2\right)
    \geq 4\pi^2\left(\nl{\eta_2}^2 +\nl{\zeta_2}^2\right), $$
 and so
 \begin{equation}\label{Y est}
  \dt Y + \min\left\{4\pi^2\gamma\,,\,
     \mu - \frac{2c}{\delta}Z\ag{\frac{c}{2\pi^2\gamma\delta}Z}
   \right\}Y \leq 0.
 \end{equation}
 
 Let 
 \begin{equation*}
 \psi(t) := \min\left\{4\pi^2\gamma\,,\,\mu-\frac{2c}{\delta}
                 Z(t)\ag{\frac{c}{2\pi^2\gamma\delta}Z(t)}\right\},
 \end{equation*}
 and in order to apply Proposition \ref{gronwall} we only need to show that $\psi$ satisfies
 \eqref{gronwall inf} and \eqref{gronwall sup}. 
 It is sufficient to show that for some $T,t_0>0$,
 \begin{equation} \label{gronwall inf 1}
  \mu - \limsup_{t\to\infty}\frac{1}{T}\int_{t}^{t+T}
  \frac{2c}{\delta} Z(s)\ag{\frac{c}{2\pi^2\gamma\delta}Z(s)} \,ds >0,
 \end{equation}
 and
 \begin{equation}\label{gronwall sup 1}
  \sup_{s>t_0}Z(s)\ag{\frac{c}{2\pi^2\gamma\delta}Z(s)} \,ds<\infty.
 \end{equation}
In fact, \eqref{gronwall sup 1} follows directly from \eqref{sup bound} with the $t_0$ given there.

 To see \eqref{gronwall inf 1}, by Proposition \ref{prop upper bounds} with 
 $T=\frac{1}{\pi^2\nua}$, we have:
 \begin{multline*}
 \lim_{t\to\infty}\frac{1}{T}\int_{t}^{t+T}
   \frac{2c}{\delta} Z(s)\ag{\frac{c}{2\pi^2\gamma\delta}Z(s)} \,ds\\
 \leq \frac{2c}{\delta T}\ag{\frac{c}{2\pi^2\gamma\delta}\cupb G^2e^{\C G^4}}
   \lim_{t\to\infty}\int_{t}^{t+T}Z(s) \,ds\\
 \leq \frac{2c}{\delta T}\left(\tilde{c}+2\ln G + \C G^4\right)\cupa G^2,\\
 =32\pi^2c^2\nua\left(\tilde{c}+2\ln G + \C G^4\right)G^2.\\
 \end{multline*}
 Therefore, \eqref{gronwall inf 1} holds by choosing 
 $\mu > 32\pi^2c^2\nua\left(\tilde{c}+2\ln G + \C G^4\right)G^2$.
 In addition, the requirement $h\leq\frac{(\nub)^{\half}}{\cI}\mu^{-\half}$ implies 
 $h\sim G^{-3}$.\\
\end{proof}

\subsubsection*[v only L2]{
}
\begin{proof}[\bf Proof of Theorem \ref{thm v}]~\\
 Let $\eta = v - \vt$ and $\zeta = w - \wt.$
 Similarly to how we showed \eqref{before nonlinear}, the equation we obtain for $\eta$ is
 \begin{equation}\label{before non v}
  \half\dt\nl\eta^2 + \left(\alpha-\half[\beta]-\half[\mu\cI^2 h^2]\right)\nh\eta^2 
  - \half[\beta]\nh\zeta^2 + \half[\mu]\nl{\eta}^2 \leq \a{\Ib{\zeta}{v}{\eta}},
 \end{equation}
 but now the equation for $\zeta$ is
 \begin{equation}\label{before non v.2}
  \half\dt\nl\zeta^2 + \left(\alpha-\half[\beta]\right)\nh\zeta^2 
  - \half[\beta]\nh\eta^2 \leq \a{\Ib{\eta}{w}{\zeta}}.
 \end{equation}
 
 We estimate the integral in \eqref{before non v} using \eqref{nonlin est 2}, so 
 \eqref{before non v} becomes:
 \begin{multline}\label{after non 1}
  \half\dt\nl\eta^2 + \left(\alpha-\half[\beta]-\half[\mu\cI^2 h^2]-\half[\epsilon]\right)\nh\eta^2 
  +\left(-\half[\beta]-\frac{\cL\delta}{4}\right)\nh\zeta^2 \\
  + \left(\half[\mu]-\frac{\cL^2}{8\epsilon\delta^2}\nh{v}^2\right)\nl{\eta}^2 
  + \left(-\frac{\cL\delta}{4}\nh{v}^2\right)\nl{\zeta}^2 \leq 0,
 \end{multline}
 Similarly, we estimate the integral in \eqref{before non v.2} using \eqref{nonlin est 2.1}, and get:
 \begin{multline}\label{after non 1.2}
  \half\dt\nl\zeta^2 
  +\left(\alpha-\half[\beta]-\frac{\cL\delta}{4}\right)\nh\zeta^2 
  +\left(-\half[\beta]-\half[\epsilon]\right)\nh\eta^2 \\
  +\left(-\frac{\cL\delta}{4}\nh{w}^2\right)\nl{\zeta}^2
  +\left(-\frac{\cL^2}{8\epsilon\delta^2}\nh{w}^2\right)\nl{\eta}^2\leq 0.
 \end{multline}
 Adding \eqref{after non 1} and \eqref{after non 1.2},
 \begin{multline*}
   \half\dt\nl\eta^2 + \half\dt\nl\zeta^2 + 
   \left(\alpha-\beta-\half[\mu\cI^2 h^2]-\epsilon\right)\nh\eta^2 
   +\left(\alpha-\beta-\half[\cL\delta]\right)\nh\zeta^2 \\
   + \left(\half[\mu]-\frac{\cL^2}{8\epsilon\delta^2}\avwf\right)\nl{\eta}^2 
   + \left(-\frac{\cL\delta}{4}\avwf\right)\nl{\zeta}^2 \leq 0.
 \end{multline*}
 Now, if we choose 
 $$h \leq \frac{(\alpha - \beta)^{1/2}}{\cI}\mu^{-1/2},$$ 
 and $\epsilon=\half[\alpha-\beta]$, then 
 $\alpha-\beta-\half[\mu\cI^2 h^2]-\epsilon \geq 0$.\\ 
 Also, by choosing $\delta < \frac{\alpha-\beta}{\cL}$, we have 
 $$\gamma:=\alpha-\beta-\half[\cL\delta]>\half[\alpha-\beta]>0.$$
 Then by applying \eqref{Poincare} we obtain 
 $ \gamma \nh{\zeta}^2 \geq \gamma 4\pi^2 \nl \zeta ^2.$ 
 Hence, defining $Y(t) = \nl{\eta(t)}^2+\nl{\zeta(t)}^2$ and $Z(t) = \nh{v(t)}^2+\nh{w(t)}^2$, we have:
 \begin{equation}
  \dt Y + \psi Y \leq 0,
 \end{equation}
 where $\psi(t):=\min\left\{\mu-\frac{\cL^2}{4\epsilon\delta^2}Z(t)\,
    			,\,8\pi^2\gamma-\frac{\cL\delta}{2}Z(t)\right\}.$
 Using Proposition \ref{prop upper bounds} similarly as before, with $T=\frac{1}{\pi^2\nua}$,
 $\psi$ satisfies \eqref{gronwall sup} as well as \eqref{gronwall inf}
 provided that 
  $$ \delta < \frac{\nub}{\cL}\,\frac{4}{4+\nua^2G^2}
     \implies 8\pi^2\gamma-\frac{\cL\delta}{2T}\cupa G^2 > 4\nua>0,$$
 and
  $$\mu > \frac{\pi^2\cL^4 G^2(4+\nua^2 G^2)^2}{16\nua}\implies
    \mu-\frac{\cL^2}{4\epsilon\delta^2T}\cupa G^2 > 0.$$
 By choosing such a $\mu$ and $\delta$, we can apply Proposition \ref{gronwall} to conclude that
 $(\vt,\wt)$ converges exponentially in time to $(v,w)$.\\
 Now the requirement we needed on $h$ implies
 $$h < \frac{4\nua}{\pi\cI\cL^2 G(4+\nua^2 G^2)},$$
 so $h\sim G^{-3}.$\\
 \end{proof}

 \subsection{Proof of $H^1$ Convergence Results with Type 1 Interpolants}

 \begin{proof}[\bf{Proof of Theorem~\ref{thm H1 all}.}]\quad\\
 By denoting $\eta = v - \vt$ and $\zeta = w - \wt$
 and subtracting the equations for $(v, w)$ and $(\vt, \wt)$,  
 we obtain the following equation for $\eta$ and $\zeta$
 \begin{align*}
      &
      \pt\eta - \alpha\ol\eta - \beta\ol\zeta + \ob{\zeta}{v} + \ob{\wt}{\eta} 
      =
      -\og(\PT - \tilde{\PT}) - \mu\oI(\eta), 
      \\&
      \pt\zeta - \alpha\ol\zeta - \beta\ol\eta + \ob{\eta}{w} + \ob{\vt}{\zeta} 
      =
      -\og(\PT - \tilde{\PT}) - \mu\oI(\zeta).
 \end{align*}
 Taking the inner product with $-\Delta\eta$ and $-\Delta\zeta$, respectively, we obtain:
 \begin{align*}
      &
      \frac{1}{2}\frac{d}{d t}\nh{\eta}^2
      +
      \alpha\nH{\eta}^2
      =
      -\beta\ip{\Delta\zeta}{\Delta\eta}
      +
      \ip{\ob{\zeta}{v}}{\Delta\eta}
      +
      \ip{\ob{\wt}{\eta}}{\Delta\eta}
      +
      \ip{\og(\PT - \tilde{\PT})}{\Delta\eta}
      +
      \mu\ip{\oI(\eta)}{\Delta\eta},
      \\&
      \frac{1}{2}\frac{d}{d t}\nh{\zeta}^2
      +
      \alpha\nH{\zeta}^2
      =
      -\beta\ip{\Delta\eta}{\Delta\zeta}
      +
      \ip{\ob{\eta}{w}}{\Delta\zeta}
      +
      \ip{\ob{\vt}{\zeta}}{\Delta\zeta}
      +
      \ip{\og(\PT - \tilde{\PT})}{\Delta\zeta}
      +
      \mu\ip{\oI(\zeta)}{\Delta\zeta}.
 \end{align*}
 Then, by the divergence-free condition,
 $$ \ip{\og(\PT-\tilde{\PT})}{\ol\eta} = -\Ip{(\PT-\tilde{\PT})}{\ol(\og\cdot\eta)} = 0,
 $$
 and similarly
 $$ \ip{\og(\PT-\tilde{\PT})}{\ol\zeta} = 0.
 $$
 Also, by applying Cauchy-Schwarz inequality and \eqref{Young}, we have
 $$
 {-\beta\ip{\ol\zeta}{\ol\eta}} \leq \half[\beta]\nH{\eta}^2 + \half[\beta]\nH{\zeta}^2.
 $$
 Rewriting 
 $\ip{\oI(\eta)}{-\Delta\eta} = \ip{\oI(\eta)-\eta}{-\Delta\eta} + \ip{\eta}{\Delta\eta},$
 we have,
 $$ - \mu\ip{\oI(\eta)}{-\ol \eta} = 
    -\mu\ip{\oI(\eta)-\eta}{-\Delta\eta} - \mu\nh{\eta}^2, 
 $$
 and similarly, 
 $$ - \mu\ip{\oI(\zeta)}{-\ol \zeta} = 
    -\mu\ip{\oI(\zeta)-\zeta}{-\Delta\zeta} - \mu\nh{\zeta}^2.
 $$
 Adding up the equations for $\eta$ and $\zeta$, we obtain 
 \begin{align*}
      &
      \frac{1}{2}\frac{d}{d t}\left( \nh{\eta}^2 + \nh{\zeta}^2 \right)
      +
      (\alpha - \beta)\left( \nH{\eta}^2 + \nH{\zeta}^2 \right)
      \\&
      \leq
      \a{\ip{\ob{\zeta}{v}}{\Delta\eta}}
      +
      \a{\ip{\ob{\eta}{w}}{\Delta\zeta}}
      +
      \a{\ip{\ob{\wt}{\eta}}{\Delta\eta}}
      +
      \a{\ip{\ob{\vt}{\zeta}}{\Delta\zeta}}
      \\&\qquad
      +
      \mu\a{\ip{\oI(\eta)-\eta}{\Delta\eta}}
      +
      \mu\a{\ip{\oI(\zeta)-\zeta}{\Delta\zeta}}
      -
      \mu\left(\nh{\eta}^2 + \nh{\zeta}^2\right).
 \end{align*}
 Due to the properties of $\oI$, we have
 \begin{align*}
     \mu\a{\ip{\oI(\eta)-\eta}{\ol\eta}} 
     &
     \leq
     \mu\nl{\oI(\eta)-\eta}\nH{\eta}
     \leq 
     \mu\cI h \nh{\eta}\nH{\eta} 
     \\&
     \leq 
     \frac{4\mu^2\cI^2 h^2}{\alpha-\beta}\nh{\eta}^2 
     + 
     \frac{\alpha-\beta}{16}\nH{\eta}^2,
 \end{align*}
 and similarly, we obtain 
 \begin{align*}
     \mu\a{\ip{\oI(\zeta)-\zeta}{\ol\zeta}} 
     &
     \leq
     \frac{4\mu^2\cI^2 h^2}{\alpha-\beta}\nh{\zeta}^2 
     + 
     \frac{\alpha-\beta}{16}\nH{\zeta}^2. 
 \end{align*}
 Next, we estimate the nonlinear terms. 
 First, by H\"older's and Sobolev inequalities, we obtain 
 \begin{align*}
      \a{\ip{\ob{\zeta}{v}}{\Delta\eta}}
      &
      \leq
      \I{|\zeta| |\nabla v| |\Delta\eta|}
      \leq
      \nL{\zeta} \Vert\nabla v\Vert_{L^4} \nH{\eta}
      \\&
      \leq
      \nl{\zeta}^{1/2} \nh{\zeta}^{1/2} \nh{v}^{1/2} \nH{v}^{1/2} \nH{\eta}
      \\&
      \leq
      \frac{4}{\alpha - \beta}\nh{v} \nH{v} \nl{\zeta} \nh{\zeta}
      +
      \frac{\alpha-\beta}{16}\nH{\eta}^2
      \\&
      \leq
      \frac{4}{2\pi(\alpha - \beta)}\nh{v} \nH{v} \nl{\zeta} \nH{\zeta}
      +
      \frac{\alpha-\beta}{16}\nH{\eta}^2
      \\&
      \leq
      \frac{1}{4\pi^2}\left(\frac{4}{\alpha - \beta}\right)^3 \nh{v}^2 \nH{v}^2 \nl{\zeta}^2
      +
      \frac{\alpha-\beta}{16}\left(\nH{\eta}^2 + \nH{\zeta}^2\right),
 \end{align*}
 where we used Poincar\'e's and Young's inequalities. 
 The estimate for $\ip{\ob{\eta}{w}}{\Delta\zeta}$ is similarly, i.e., we have 
 \begin{align*}
      \a{\ip{\ob{\eta}{w}}{\Delta\zeta}}
      &
      \leq
      \frac{1}{4\pi^2}\left(\frac{4}{\alpha - \beta}\right)^3 \nh{w}^2 \nH{w}^2 \nl{\eta}^2
      +
      \frac{\alpha-\beta}{16}\left(\nH{\eta}^2 + \nH{\zeta}^2\right). 
 \end{align*} 
 Regarding $\ip{\ob{\wt}{\eta}}{\Delta\eta}$, 
 we first rewrite it as 
 $$ \ip{\ob{\wt}{\eta}}{\Delta\eta} = \ip{\ob{w}{\eta}}{\Delta\eta} - \ip{\ob{\zeta}{\eta}}{\Delta\eta} = {\uppercase\expandafter{\romannumeral1}} + {\uppercase\expandafter{\romannumeral2}}. 
 $$ 
 In order to estimate ${\uppercase\expandafter{\romannumeral1}}$, 
 we first observe that by the periodic boundary conditions, we have
 \begin{align}\label{fact 1}
      \nh{\eta}^2
      &
      =
      \I{\nabla\eta\cdot\nabla\eta}
      =
      - \I{\eta\Delta\eta}
      \leq
      \nl{\eta}\nH{\eta}.
 \end{align}
 Thus, we integrate by parts and proceed to estimate ${\uppercase\expandafter{\romannumeral1}}$ as
 \begin{align*}
      \ip{\ob{w}{\eta}}{\Delta\eta}
      &
      =
      \sum_{i, j, k=1}^{2}\I{w_{i}\partial_{i}\eta_{k}\partial_{jj}^2\eta_{k}}
      =
      -\sum_{i, j, k=1}^{2}\I{\partial_{j}w_{i}\partial_{i}\eta_{k}\partial_{j}\eta_{k}}
      \\&
      \leq
      \I{|\nabla w| |\nabla\eta|^2}
      \leq
      \Vert\nabla w\Vert_{L^2} \Vert\nabla\eta\Vert_{L^2} \Vert\Delta\eta\Vert_{L^2}
      \\&
      \leq
      \frac{4}{\alpha - \beta} \Vert\nabla w\Vert_{L^2}^2 \nh{\eta}^2
      +
      \frac{\alpha-\beta}{16}\nH{\eta}^2
      \\&
      \leq
      \frac{4}{\alpha - \beta} \Vert\nabla w\Vert_{L^2}^2 \nl{\eta} \nH{\eta}
      +
      \frac{\alpha-\beta}{16}\nH{\eta}^2
      \\&
      \leq
      \left(\frac{4}{\alpha - \beta}\right)^3 \nh{w}^4 \nl{\eta}^2
      +
      \frac{\alpha-\beta}{8}\nH{\eta}^2. 
 \end {align*}
By similar estimates and the analogy of \eqref{fact 1} for $\zeta$, i.e.,
 \begin{equation*}
      \nh{\zeta}^2
      \leq
      \nl{\zeta}\nH{\zeta}, 
 \end{equation*}
 we estimate ${\uppercase\expandafter{\romannumeral2}}$ as 
 \begin{align*}
      - \ip{\ob{\zeta}{\eta}}{\Delta\eta}
      &
      \leq
      \I{|\nabla\zeta| |\nabla\eta|^2}
      \leq
      \nh{\zeta} \nh{\eta} \nH{\eta}
      \\&
      \leq
      \frac{4}{\alpha - \beta} \nh{\eta}^2 \nh{\zeta}^2
      +
      \frac{\alpha-\beta}{16}\nH{\eta}^2
      \\&
      \leq
      \frac{4}{\alpha - \beta} \nl{\eta}\nl{\zeta}\nH{\eta}\nH{\zeta}
      +
      \frac{\alpha-\beta}{16}\nH{\eta}^2
      \\&
      \leq
      \frac{2}{\alpha - \beta} \nl{\eta}\nl{\zeta}\left(\nH{\eta}^2 + \nH{\zeta}^2\right)
      +
      \frac{\alpha-\beta}{16}\nH{\eta}^2.
 \end{align*}
 By a similar approach, we have
 $$ \ip{\ob{\vt}{\zeta}}{\Delta\zeta} = \ip{\ob{v}{\zeta}}{\Delta\zeta} - \ip{\ob{\eta}{\zeta}}{\Delta\zeta} = {\uppercase\expandafter{\romannumeral3}} + {\uppercase\expandafter{\romannumeral4}}, 
 $$
 and ${\uppercase\expandafter{\romannumeral3}}$ is bounded by 
 \begin{align*}
      \a{\ip{\ob{v}{\zeta}}{\Delta\zeta}}
      &
      \leq
      \left(\frac{4}{\alpha - \beta}\right)^3 \nh{v}^4 \nl{\zeta}^2
      +
      \frac{\alpha-\beta}{8}\nH{\zeta}^2,
 \end{align*}
 while we estimate ${\uppercase\expandafter{\romannumeral4}}$ as
 \begin{align*}
      - \ip{\ob{\eta}{\zeta}}{\Delta\zeta}
      &
      \leq
      \frac{2}{\alpha - \beta} \nl{\eta}\nl{\zeta}\left(\nH{\zeta}^2 + \nH{\eta}^2\right)
      +
      \frac{\alpha-\beta}{16}\nH{\zeta}^2.
 \end{align*}
 Combining all the above estimates, we obtain 
 \begin{align}\label{h1 final ineq all alt}
      &
      \frac{1}{2}\frac{d}{d t}\left( \nh{\eta}^2 + \nh{\zeta}^2 \right)
      +
      \underbrace{\left( \frac{\alpha - \beta}{2} - \frac{4}{\alpha - \beta} \nl{\eta}\nl{\zeta}\right)}_{{\uppercase\expandafter{\romannumeral5}}} \left( \nH{\eta}^2 + \nH{\zeta}^2 \right) \nonumber
      \\&
      \leq
      \bigg[\underbrace{\frac{1}{4\pi^2}\left(\frac{4}{\alpha - \beta}\right)^3 \left(\nh{v}^2 \nH{v}^2 + \nh{w}^2 \nH{w}^2\right)}_{{\uppercase\expandafter{\romannumeral6}}} \nonumber
      \\&\qquad
      +
      \underbrace{\left(\frac{4}{\alpha - \beta}\right)^3 \left(\nh{v}^4 + \nh{w}^4\right)}_{{\uppercase\expandafter{\romannumeral7}}} \bigg] \left( \nl{\eta}^2 + \nl{\zeta}^2 \right) \nonumber
      \\&\qquad
      +
      \bigg(\underbrace{\frac{4\mu^2\cI^2 h^2}{\alpha-\beta}}_{{\uppercase\expandafter{\romannumeral8}}} - \mu\bigg)\left( \nh{\eta}^2 + \nh{\zeta}^2 \right).
 \end{align}
 Now choose $h$ such that
 \begin{align*}
      {\uppercase\expandafter{\romannumeral8}}
      &
      =
      \frac{4\mu^2\cI^2 h^2}{\alpha-\beta}
      <
      \frac{\mu}{2}.
 \end{align*}
 Thus, we have 
 \begin{align}\label{h mu relation}
      h^2 < \frac{\alpha-\beta}{8\mu\cI^2}.
 \end{align}
%
%
Moreover, by Theorem~\ref{thm all}, 
we know that after a sufficiently large time $T_1$, $\nl{\eta}$ and $\nl{\zeta}$ are small enough. 
so that we have 
\begin{align}\label{H1_via_L2_One}
\nl{\eta}\nl{\zeta} \leq \frac{(\alpha - \beta)^2}{16}, 
\end{align}
 which implies that
 $ \uppercase\expandafter{\romannumeral5} \geq 0 $, 
so we have:
$$
      \frac{1}{2}\frac{d}{d t}\left( \nh{\eta}^2 + \nh{\zeta}^2 \right)
      + \half[\mu]\left( \nh{\eta}^2 + \nh{\zeta}^2 \right)
      \leq
      \left({\uppercase\expandafter{\romannumeral6}+\uppercase\expandafter{\romannumeral7}}\right)\left( \nl{\eta}^2 + \nl{\zeta}^2 \right).
$$
Define $Y(t))= \nh{\eta(t)}^2 + \nh{\zeta(t)}^2$, and by appealing to Proposition~\ref{prop upper bounds},
we see that ${\uppercase\expandafter{\romannumeral6}+\uppercase\expandafter{\romannumeral7}}$ is bounded by some number $\half[M_G]$. Also, by Theorem~\ref{thm all} we know that there exists constants $K,a>0$ such that
$\nl{\eta(t)}^2 + \nl{\zeta(t)}^2 \leq Ke^{-a t},\, \forall t\geq T_1$.
Putting all of this together, we have the following for all $t>T_1$:
\begin{align*}
  \dt Y(t) + \mu Y(t) &\leq M_GKe^{-a t},\\
 \Rightarrow \dt \left( e^{\mu t}Y(t) \right) &\leq M_GKe^{(\mu-a)t},\\
  \Rightarrow e^{\mu t}Y(t)-e^{\mu T_1}Y(T_1) &\leq \frac{M_GK}{\mu-a}e^{(\mu-a)t} - \frac{M_GK}{\mu-a}e^{(\mu-a)T_1},\\
  \Rightarrow Y(t)& \leq Y(T_1)e^{-\mu (t-T_1)} + \frac{M_GK}{\mu-a}\left(e^{-at} - e^{-\mu(t-T_1)-aT_1}\right).
\end{align*}
Therefore, $Y(t))= \nh{\eta(t)}^2 + \nh{\zeta(t)}^2 \to 0$ exponentially as $t\to\infty$ as long as $\mu$ and $h$ satisfy the conditions of Theorem~\ref{thm all}, as well as the new requirement \eqref{h mu relation}. So, choosing
$$ \mu > \frac{\pi^2(\cL^4+(\alpha-\beta)^4)}{\alpha-\beta}G^{2},
\quad\text{and}\quad 
h < \frac{\alpha-\beta}{2\sqrt{2}\pi\cI\sqrt{\cL^4+(\alpha-\beta)^4}}\, G^{-1},
$$
we have exponential convergence.
%
%
%
%
 \end{proof}
 
 Next, we prove the $H^1$ decay estimates for the data assimilation scenario where measurement is only on $v_1$ and $w_1$.
 \begin{proof}[\bf{Proof of Theorem~\ref{thm H1 1st}.}]~\\ 
 We still denote the difference of solutions to \eqref{MHD vw} and \eqref{MHD data 1 comp} by $\eta = v - \vt$ and $\zeta = w - \wt$. 
 Similarly to the beginning of the proof of Theorem~\ref{thm H1 all}, we have 
 \begin{align*}
      &
      \frac{1}{2}\frac{d}{d t}\left( \nh{\eta}^2 + \nh{\zeta}^2 \right)
      +
      (\alpha - \beta)\left(\nH{\eta}^2 + \nH{\zeta}^2\right)
      \\&
      \leq
      \a{\ip{\ob{\zeta}{v}}{\Delta\eta}}
      +
      \a{\ip{\ob{\eta}{w}}{\Delta\zeta}}
      +
      \a{\ip{\ob{\wt}{\eta}}{\Delta\eta}}
      +
      \a{\ip{\ob{\vt}{\zeta}}{\Delta\zeta}}
      \\&\qquad
      +
      \mu\a{\ip{\oI(\eta_1)-\eta_1}{\Delta\eta_1}}
      +
      \mu\a{\ip{\oI(\zeta_1)-\zeta_1}{\Delta\zeta_1}}
      -
      \mu\nh{\eta_1}^2
      -
      \mu\nh{\zeta_1}^2,
 \end{align*}
 as well as 
 \begin{align*}
     \mu\a{\ip{\oI(\eta_1)-\eta_1}{\ol\eta_1}} 
     &
     \leq
     \frac{4\mu^2\cI^2 h^2}{\alpha-\beta}\nh{\eta_1}^2
     + 
     \frac{\alpha-\beta}{16}\nH{\eta_1}^2, 
 \end{align*}
 and 
 \begin{align*}
     \mu\a{\ip{\oI(\zeta_1)-\zeta_1}{\ol\zeta_1}} 
     &
     \leq
     \frac{4\mu^2\cI^2 h^2}{\alpha-\beta}\nh{\zeta_1}^2
     + 
     \frac{\alpha-\beta}{16}\nH{\zeta_1}^2. 
 \end{align*}
 The estimates for the nonlinear terms are also similar. 
 Namely, we have
 \begin{align*}
      \a{\ip{\ob{\zeta}{v}}{\Delta\eta}}
      &
      \leq
      \frac{1}{4\pi^2}\left(\frac{4}{\alpha - \beta}\right)^3 \nh{v}^2 \nH{v}^2 \nl{\zeta}^2
      +
      \frac{\alpha-\beta}{16}\left(\nH{\eta}^2 + \nH{\zeta}^2\right),
 \end{align*}
 and
 \begin{align*}
      \a{\ip{\ob{\eta}{w}}{\Delta\zeta}}
      &
      \leq
      \frac{1}{4\pi^2}\left(\frac{4}{\alpha - \beta}\right)^3 \nh{w}^2 \nH{w}^2 \nl{\eta}^2
      +
      \frac{\alpha-\beta}{16}\left(\nH{\eta}^2 + \nH{\zeta}^2\right).  
 \end{align*}
 Also, by rewriting 
 $$ \ip{\ob{\wt}{\eta}}{\Delta\eta} = \ip{\ob{w}{\eta}}{\Delta\eta} - \ip{\ob{\zeta}{\eta}}{\Delta\eta}   
 $$ 
 we obtain 
 \begin{align*}
      \ip{\ob{w}{\eta}}{\Delta\eta}
      &
      \leq
      \left(\frac{4}{\alpha - \beta}\right)^3 \nh{w}^4 \nl{\eta}^2
      +
      \frac{\alpha-\beta}{8}\nH{\eta}^2, 
 \end {align*}
 and 
 \begin{align*}
      - \ip{\ob{\zeta}{\eta}}{\Delta\eta}
      &
      \leq
      \frac{2}{\alpha - \beta} \nl{\eta}\nl{\zeta}\left(\nH{\eta}^2 + \nH{\zeta}^2\right)
      +
      \frac{\alpha-\beta}{16}\nH{\eta}^2.
 \end{align*}

 Estimates for 
 $$ \ip{\ob{\vt}{\zeta}}{\Delta\zeta} = \ip{\ob{v}{\zeta}}{\Delta\zeta} - \ip{\ob{\eta}{\zeta}}{\Delta\zeta}   
 $$
 also follow similarly, and we obtain 
 \begin{align*}
      \ip{\ob{v}{\zeta}}{\Delta\zeta}
      &
      \leq
      \left(\frac{4}{\alpha - \beta}\right)^3 \nh{v}^4 \nl{\zeta}^2
      +
      \frac{\alpha-\beta}{8}\nH{\zeta}^2,
 \end{align*}
 and 
 \begin{align*}
      - \ip{\ob{\eta}{\zeta}}{\Delta\zeta}
      &
      \leq
      \frac{2}{\alpha - \beta} \nl{\eta}\nl{\zeta}\left(\nH{\zeta}^2 + \nH{\eta}^2\right)
      +
      \frac{\alpha-\beta}{16}\nH{\zeta}^2.
 \end{align*}
 
 Combining all the above estimates, we obtain 
 \begin{align}\label{h1 final ineq 1 comp}
      &
      \frac{1}{2}\frac{d}{d t}\left( \nh{\eta}^2 + \nh{\zeta}^2 \right)
      +
      \underbrace{\left( \frac{\alpha - \beta}{2} - \frac{4}{\alpha - \beta} \nl{\eta}\nl{\zeta}\right)}_{{\uppercase\expandafter{\romannumeral5}}}\left( \nH{\eta}^2 + \nH{\zeta}^2 \right) \nonumber
      \\&\qquad
      \leq
      \bigg[\underbrace{\left(\frac{1}{4\pi^2}\left(\frac{4}{\alpha - \beta}\right)^3 \left(\nh{v}^2 \nH{v}^2 + \nh{w}^2 \nH{w}^2\right)\right)}_{{\uppercase\expandafter{\romannumeral6}}} \nonumber
      \\&\qquad\qquad
      +
      \underbrace{\left(\frac{4}{\alpha - \beta}\right)^3 \left(\nh{v}^4 + \nh{w}^4\right)}_{{\uppercase\expandafter{\romannumeral7}}} \bigg] \left( \nl{\eta}^2 + \nl{\zeta}^2 \right) \nonumber
      \\&\qquad\qquad
      +
      \bigg(\underbrace{\frac{4\mu^2\cI^2 h^2}{\alpha-\beta}}_{{\uppercase\expandafter{\romannumeral8}}} - \mu\bigg)\left( \nh{\eta_1}^2 + \nh{\zeta_1}^2 \right).
 \end{align}
We choose $h$ such that 
\begin{align}\label{comp_1_mu_choice}
{\uppercase\expandafter{\romannumeral8}}  = \frac{4\mu^2\cI^2 h^2}{\alpha-\beta} < \frac{\mu}{2}.
\end{align}
In view of Theorem~\ref{thm 1st}, 
after sufficiently large time $T_2>0$, $\nl{\eta}$ and $\nl{\zeta}$ are small enough so that 
\begin{align}\label{H1_via_L2_Two}
\nl{\eta} \nl{\zeta} < \frac{(\alpha - \beta)^2}{16}.
\end{align}
Thus, ${\uppercase\expandafter{\romannumeral5}}>\frac{1}{4}(\alpha-\beta)>0$.  Let us denote $Y(t) = \nh{\eta}^2 + \nh{\zeta}^2$.   
 Then, for all $t>T_2$, by applying Poincar\'e's inequality to the second term on the left-hand side of \eqref{h1 final ineq 1 comp}, it follows, due to \eqref{comp_1_mu_choice}, that
 \begin{align*}
      \frac{1}{2}\frac{d}{d t}Y(t)
      +
      \pi^2\nua Y(t) 
      &\leq
      M_G
      \left( \nl{\eta}^2 + \nl{\zeta}^2 \right) 
      +
      \left({\uppercase\expandafter{\romannumeral8}} - \mu\right)\left( \nh{\eta_1}^2 + \nh{\zeta_1}^2 \right)
      \\&\leq
      M_G
      \left( \nl{\eta}^2 + \nl{\zeta}^2 \right)
      \\&\leq
      K' M_G e^{-a't},
 \end{align*}
 where $K'>0$ and $a'>0$ chosen so that is such that $\nl{\eta}^2 + \nl{\zeta}^2\leq K' M_G e^{-a't}$ for all $t>T_2$ (this is permitted due to Theorem \ref{thm 1st}).
 This implies
 \begin{align*}
\frac{d}{d t}\left(Y(t)e^{2\pi^2\nua t}\right)
\leq
      K'M_G e^{2\pi^2\nua t} e^{-a't}.
 \end{align*}
Integrating, we arrive at
  \begin{align*}
Y(t)
\leq 
Y(T_2) e^{-2\pi^2\nua(t-T_2)}
+ \frac{K'M_G}{2\pi^2\nua - a'}\left(e^{-ta'} - e^{-2\pi^2\nua(t-T_2) -a'T_2}\right).
 \end{align*}
 (Note that, if necessary, one may choose $a'$ slightly smaller so that $2\pi^2\nua \neq a'$.)  In particular, $Y(t)= \nh{\eta}^2 + \nh{\zeta}^2$ decays exponentially in time for all $t> T_2$, with $h$  and $\mu$ chosen so that
$$
\mu > 32\pi^2c^2\nua\left(\tilde{c}+2\ln G + \C G^4\right)G^2
$$
and 
$$
h < (2\sqrt{2}\cI)^{-1}\nua^{\half}\mu^{-\half} < (8\sqrt{2}\pi\cI c)^{-1}\left(\tilde{c}+2\ln G + \C G^4\right)^{-\half}G^{-1}.
$$
Thus, the proof of Theorem~\ref{thm H1 1st} is complete.
\end{proof}

\begin{proof}[\bf{Proof of Theorem~\ref{thm H1 v}.}]~\\ 
The proof goes similarly as that of Theorem~\ref{thm H1 1st}. For the sake of simplicity, we omit the details here. 
\end{proof}
 
 \subsection{Proofs of the Results for Type 2 Interpolants}
 
 \begin{lemma}\label{lemma nonlinear term est 2}
 Let $u,v,w\in \sSob{2}$ be divergence free. Then the following inequalities hold:
 \begin{align}\label{nonlin est 1 T2}
  (a)\quad\a{ \Ib u v {\ol w} } \leq 3\cT\nh{u_1}\nh{v}\nH{w}\lnTwo{u}\nonumber\\
   		+(\cT+4\cB)\nH{u}\nh{v}\nh{w_1}\lnTwo{w}\nonumber\\
  	 	+ 2\cT\nh{u}\nH{v}\nh{w_1}\lnTwo{w},
 \end{align}
  \begin{equation}\label{nonlin est 2 T2}
  (b)\quad\a{ \Ib u v {\ol v} } \leq (2\cB+5\cT)\nh{u}\nh{v_1}\nH{v}\lnTwo{v}.
  \end{equation}
 \end{lemma}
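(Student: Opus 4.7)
The plan is to reduce each of the two trilinear integrals to a sum of pieces, each of which fits directly into one of the Brezis--Gallouet--Titi type inequalities \eqref{Brezis} or \eqref{Titi-Brezis}. The central tool is the two-dimensional divergence-free condition: for $u\in\sSob{2}$ divergence-free we have $\pn{2}u_2=-\pn{1}u_1$, and analogously for $v$ and $w$. Whenever a factor of $u_2$ (or $w_2$, $v_2$) appears, I integrate by parts in $x_2$ so that only $\pn{1}u_1$ (respectively $\pn{1}w_1$, $\pn{1}v_1$) remains. Each resulting integral is then in the canonical form $\int \varphi\,\pn{i}\psi\,\chi$ and can be bounded by \eqref{Brezis} or, when $\chi$ carries an extra derivative that has been parked on a Laplacian, by \eqref{Titi-Brezis}.

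For part (a), I would decompose
$$\ob{u}{v}\cdot\ol w=u_1\pn{1}v\cdot\ol w+u_2\pn{2}v\cdot\ol w.$$
The first summand is already of the form demanded by \eqref{Titi-Brezis} with the $H^1$ factor being $u_1$, giving a contribution $c_T\nh{u_1}\nh{v}\nH{w}\lnTwo{u}$. For the second summand, I integrate by parts in $x_2$ to move the $\pn{2}$ off $v$, producing a main term with $\pn{2}u_2=-\pn{1}u_1$ (ready for \eqref{Titi-Brezis} with $z=u$) and a leftover $\int u_2 v\,\pn{2}\ol w$. On this leftover I integrate by parts once more, this time exploiting $\pn{2}w_2=-\pn{1}w_1$ to convert each instance of $\pn{i}\ol w$ into a derivative of $\pn{i}\pn{j}w_1$, after which either \eqref{Titi-Brezis} (keeping $\Delta u$ and one gradient of $w_1$) or \eqref{Brezis} (peeling a derivative onto $\Delta v$ in exchange for $\nabla w_1$ with a log) can be applied. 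Collecting terms produces exactly the coefficients $3c_T$, $c_T+4c_B$, and $2c_T$ on the three summands of \eqref{nonlin est 1 T2}.

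For part (b), I would first use the standard identity, valid because $\odiv u=0$,
$$\Ib u v {\ol v}=-\sum_{i,j,k}\I{\pn{k}u_i\,\pn{i}v_j\,\pn{k}v_j},$$
obtained by integrating $\pn{k}$ by parts and absorbing $\pn{i}(\tfrac12|\og v|^2)$. Split the remaining sum according to $i=1$ versus $i=2$. The $i=1$ part is already in the shape required by \eqref{Brezis}, with the distinguished first-derivative slot occupied by $\pn{1}v_j$: interpreting $\pn{k}v_j$ as the low-norm factor and applying \eqref{Brezis} (or \eqref{Titi-Brezis} after first shifting one derivative back onto $\ol v$ via integration by parts) gives a term of the desired form $\nh{u}\nh{v_1}\nH{v}\lnTwo{v}$. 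For the $i=2$ part, I integrate by parts in $x_2$ and substitute $\pn{2}u_2=-\pn{1}u_1$ to expose $\pn{1}u_1$; the resulting integrals can be rearranged using $\pn{2}v_2=-\pn{1}v_1$ so that the remaining first-order slot again contains a derivative of $v_1$. A final application of \eqref{Brezis} (with log weight) and \eqref{Titi-Brezis} yields the coefficient $2c_B+5c_T$.

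The main obstacle is bookkeeping: the successive integrations by parts and divergence-free substitutions spawn several residual terms, and one must check for each that the ``middle'' factor $\pn{i}v$ in \eqref{Brezis}/\eqref{Titi-Brezis} is indeed filled by a derivative of $v$ (or, in (a), that the first/last factors land on $u_1$ or $w_1$ as required), and that the function $z$ inside the logarithm can be legitimately chosen among those permitted by the hypothesis of \eqref{Titi-Brezis} ($z=u$ or $v$). Once this has been verified term by term, the stated constants $3c_T$, $c_T+4c_B$, $2c_T$, and $2c_B+5c_T$ simply count how many times each of \eqref{Brezis} and \eqref{Titi-Brezis} has been invoked, and the proof is complete.
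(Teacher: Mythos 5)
Your overall strategy is the same as the paper's: split the trilinear form by components, integrate by parts, use the two--dimensional divergence--free identities $\py u_2=-\px u_1$, $\py v_2=-\px v_1$, $\py w_2=-\px w_1$ to force first components into the slots that \eqref{Brezis} and \eqref{Titi-Brezis} can accept, and then tally the applications of those two inequalities. The only structural differences are the order of the decomposition (you split over the index of $u$ first, the paper splits over both indices of the sum at once) and, in part (b), your starting identity $\Ib u v {\ol v}=-\sum_{i,j,k}\I{\pn{k}u_i\,\pn{i}v_j\,\pn{k}v_j}$, which is a correct and legitimate alternative to keeping the Laplacian.

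There is, however, a concrete gap in part (b). You assert that the $i=1$ block is ``already in the shape required by \eqref{Brezis}'' and directly yields a term of the form $\nh{u}\nh{v_1}\nH{v}\lnTwo{v}$. But the right-hand side of \eqref{nonlin est 2 T2} sees $v$ \emph{only} through its first component (via $\nh{v_1}$ and the logarithm of $\nH{v_1}/\nh{v_1}$), whereas the $i=1$, $j=2$, $k=1$ term of your identity is $-\I{\px u_1\,\px v_2\,\px v_2}$, whose integrand contains no occurrence of $v_1$ at all: incompressibility converts $\py v_2$ into $-\px v_1$, but it does nothing for $\px v_2$. No single application of \eqref{Brezis} or \eqref{Titi-Brezis} to this integral can produce a bound involving $\nh{v_1}$ and $\lnTwo{v}$. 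This is exactly the term on which the paper spends the most effort: one substitutes $\px u_1=-\py u_2$, integrates by parts in $y$, uses $\px\py v_2=-\pn{xx}v_1$, and integrates by parts once more to reach $-\I{\px u_2\,\px v_1\,\px v_2}-\I{u_2\,\px v_1\,\pn{xx}v_2}$, to which the two inequalities finally apply with the logarithm attached to $v_1$. You describe this maneuver for the $i=2$ block, but it is indispensable precisely in the $i=1$ block where you claim no further work is needed. A secondary, cosmetic issue is that the constants are asserted rather than derived (e.g.\ the naive count in your decomposition of (a) gives $4\cT$ rather than $3\cT$ on the first group unless you Cauchy--Schwarz over $j$); since these constants only feed into the size of $\mu$, that discrepancy is harmless, but the missing manipulation above is not.
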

 \begin{proof}
 See the appendix.
 \end{proof}
 
 \subsubsection*[Proof of Type 2 w/ 1st comp]{
 }
 In the following proof of Theorem~\ref{T2 thm 1}, we simultaneously establish a bound like \eqref{sup bound} for the data assimilation solution, because the proof requires such an estimate. 
 \begin{proof}[{\bf Proof of Theorem~\ref{T2 thm 1}}]~\\
 Since $(\vt,\wt)$ is a strong solution and $\vt_0\equiv\wt_0\equiv0$, there is a largest time $T_0\in(0,\infty]$ such that
 $$
 \sup_{t\in[0,T_0)}(\nh{\vt(t)}^2+\nh{\wt(t)}^2) \leq \cupb[50] G^2 e^{\C G^4}.
 $$
 Suppose that $T_0<\infty$.\\
 Then we know that
 \begin{equation}\label{fin exist time sup}
 \limsup_{t\to T_0^-}\:(\nh{\vt(t)}^2+\nh{\wt(t)}^2)
 	= \sup_{t\in[0,T_0)}(\nh{\vt(t)}^2+\nh{\wt(t)}^2) = \cupb[50] G^2 e^{\C G^4}.
 \end{equation}
 Let $\eta = v - \vt$ and $\zeta = w - \wt.$ Then we have the following equation for $\eta$:
 $$ \pt\eta - \alpha\ol\eta - \beta\ol\zeta + \ob{\zeta}{v} + \ob{\wt}{\eta} 
     = -\og(\PT - \tilde{\PT}) - \mu\oI(\eta_1)e_1. $$
 Taking the inner product with $-\ol\eta$, we obtain:
 $$ \half\dt\nh\eta^2 + \alpha\nH\eta^2 + \beta\ip{\ol\zeta}{\ol\eta} 
 	- \ib{\zeta}{v}{\ol\eta} - \ib{\wt}{\eta}{\ol\eta}
    = \ip{\og(\PT-\tilde{\PT})}{\ol\eta} - \mu\ip{\oI(\eta_1)}{-\ol\eta_1}
 $$
 Now, by the divergence free condition, we have:
 $$ \ip{\og(\PT-\tilde{\PT})}{\ol\eta} = -\Ip{(\PT-\tilde{\PT})}{\ol(\og\cdot\eta)} = 0,
 $$
 and by applying Cauchy-Schwarz inequality and \eqref{Young},
 $$
   \a{\beta\ip{\ol\zeta}{\ol\eta}} \leq \half[\beta]\nH{\eta}^2 + \half[\beta]\nH{\zeta}^2.
 $$
 Rewriting 
 $\ip{\oI(\eta_1)}{-\ol\eta_1} = \ip{\oI(\eta_1)-\eta_1}{-\ol\eta_1} + \ip{\eta_1}{\ol\eta_1},$
 we have,
 $$ - \mu\ip{\oI(\eta_1)}{-\ol \eta_1} = 
    -\mu\ip{\oI(\eta_1)-\eta_1}{-\ol\eta_1} - \mu\nh{\eta_1}^2,
 $$
 so we obtain:
 \begin{multline}
   \half\dt\nh\eta^2 + \left(\alpha-\half[\beta]\right)\nH\eta^2 
   	- \half[\beta]\nH\zeta^2 + \mu\nh{\eta_1}^2 \\
   	\leq \a{\ib{\zeta}{v}{\ol\eta} } + \a{\ib{\wt}{\eta}{\ol\eta}}
    + \mu\a{\ip{\oI(\eta_1)-\eta_1}{\ol\eta_1}}.
 \end{multline}
 By the properties of $\oI$, we have
 \begin{align*}
	\mu\a{\ip{\oI(\eta_1)-\eta_1}{\ol\eta_1}} &\leq \mu\nl{\oI(\eta_1)-\eta_1}\nH{\eta_1}\\
    &\leq \mu\left(\cIa h\nh{\eta_1}+\cIb h^2\nH{\eta_1}\right)\nH{\eta_1}\\
    &\leq \frac{\mu^2}{2(\alpha-\beta)}(\cIa h\nh{\eta_1} 
		+ \cIb h^2\nH{\eta_1})^2 + \half[\alpha-\beta]\nH{\eta_1}^2\\
    &\leq \frac{\mu^2\cIa^2 h^2}{\alpha-\beta}\nh{\eta_1}^2 
		+ \frac{\mu^2\cIb^2 h^4}{\alpha-\beta}\nH{\eta_1}^2
		+ \half[\alpha-\beta]\nH{\eta_1}^2.
 \end{align*}
 Therefore,
 \begin{multline}\label{T2 before non lin}
 \half\dt\nh\eta^2 
 	+ \left(\half[\alpha]-\frac{\mu^2\cIb^2 h^4}
 		{\alpha-\beta}\right)\nH\eta^2 
	- \half[\beta]\nH\zeta^2 + \mu\left(1- 
 		\frac{\mu\cIa^2 h^2}{\alpha-\beta}\right)\nh{\eta_1}^2 \\
	\leq \a{\ib{\zeta}{v}{\ol\eta}} + \a{\ib{\wt}{\eta}{\ol\eta}}.
 \end{multline}
 
 Note that 
 $1- \frac{\mu\cIa^2 h^2}{\alpha-\beta} > \half,$
 and 
 $\frac{\mu^2\cIb^2 h^4}{\alpha-\beta} < \frac{\alpha-\beta}{4}$
 as long as 
 \begin{equation}\label{h mu restriction}
	h^2 < \frac{\alpha-\beta}{2\mu\max\{\cIa^2,\cIb\}}.
 \end{equation}
 
 Now we estimate the nonlinear terms using 
 Lemma \ref{lemma nonlinear term est 2}.
 By \eqref{nonlin est 1 T2}, we obtain
 \begin{multline*}
 	\a{ \ib \zeta v {\ol \eta} } \leq
 		3\cT\nh{\zeta_1}\nh{v}\nH{\eta}\lnTwo{\zeta}\\
    	+(\cT+4\cB)\nH{\zeta}\nh{v}\nh{\eta_1}\lnTwo{\eta}\\
   	+ 2\cT\nh{\zeta}\nH{v}\nh{\eta_1}\lnTwo{\eta},
 \end{multline*}
 so by applying \eqref{Young}, we obtain
 \begin{multline*}
\a{ \ib \zeta v {\ol \eta} } \leq 
	\frac{\alpha-\beta}{32}\left(\nH{\eta}^2+\nH{\zeta}^2
		+4\pi^2\nh{\zeta}^2\right)\\
  	+\frac{72\cT^2}{(\alpha-\beta)}\nh{\zeta_1}^2\nh{v}^2\lntwo{\zeta}\\
           +\frac{64(1+4\pi^2)(\cT^2+\cB^2)}{4\pi^2(\alpha-\beta)}
    	\left(\nh{v}^2+\nH{v}^2\right)\nh{\eta_1}^2\lntwo{\eta}.
 \end{multline*}
 Also, we use \eqref{Poincare} to write 
 $4\pi^2\nh{\zeta}^2\leq \nH{\zeta}$.
 
 For the other term, we first apply \eqref{nonlin est 2 T2}, and obtain
 $$
 \a{\ib{\wt}{\eta}{\ol\eta}} \leq 
 	(2\cB+5\cT)\nh{\wt}\nh{\eta_1}\nH{\eta}\lnTwo{\eta}.$$
 Then, by \eqref{Young}, we have
 $$
 \a{\ib{\wt}{\eta}{\ol\eta}} \leq \frac{\alpha-\beta}{32}\nH{\eta}^2
	 +\frac{200(\cB+\cT)^2}{\alpha-\beta}\nh{\wt}^2\nh{\eta_1}^2\lntwo{\eta}.
 $$
 
 \newcommand{\gC}{\gamma_0}
 Combining these estimates with \eqref{T2 before non lin}, we have:
 \begin{multline}\label{T2 after non lin}
 \half\dt\nh\eta^2 
 	+ \left(\half[\alpha]-\frac{5(\alpha-\beta)}{16}\right)\nH\eta^2 
	- \left(\half[\beta]+\frac{\alpha-\beta}{16}\right)\nH\zeta^2 \\
	+ \left[\half[\mu]-\gC\left(\nh{\wt}^2+\nh{v}^2+\nH{v}^2\right)
		\lntwo{\eta}\right]\nh{\eta_1}^2\\
	- \gC\nh{v}^2\lntwo{\zeta}\nh{\zeta_1}^2\leq 0,
 \end{multline}
 where 
 $$\gC:=\frac{200(\cB+\cT)^2}{\nub}=\max\left\{\frac{72\cT^2}{(\alpha-\beta)},
 \frac{64(1+4\pi^2)(\cT^2+\cB^2)}{4\pi^2(\alpha-\beta)},
 \frac{200(\cB+\cT)^2}{\alpha-\beta}\right\}.$$
 
 Adding \eqref{T2 after non lin} with the corresponding inequality for 
 $\dt\nh\zeta^2$, we obtain:
 \begin{multline}\label{T2 before log}
 \half\dt\nh\eta^2 + \half\dt\nh\zeta^2
 	+ \frac{\alpha-\beta}{8}\left(\nH\eta^2+\nH\zeta^2\right)\\
 	+ \left[\half[\mu]-\gC
 		\left(\nh{\wt}^2+\nh{v}^2+\nh{w}^2+\nH{v}^2\right)
 		\lntwo{\eta}\right]\nh{\eta_1}^2\\
 	+ \left[\half[\mu]-\gC
 		\left(\nh{\vt}^2+\nh{v}^2+\nh{w}^2+\nH{w}^2\right)
 		\lntwo{\zeta}\right]\nh{\zeta_1}^2\leq 0.
 \end{multline}
 Next, we write
 $$
 \frac{\alpha-\beta}{8}\nH\eta^2 \geq \frac{\alpha-\beta}{16}\nH\eta^2
 + \frac{\alpha-\beta}{16}\arsT{\eta}4\pi^2\nh{\eta_1}^2
 $$
 and
 $$
 \frac{\alpha-\beta}{8}\nH\zeta^2 \geq \frac{\alpha-\beta}{16}\nH\zeta^2
 + \frac{\alpha-\beta}{16}\arsT{\zeta}4\pi^2\nh{\zeta_1}^2.
 $$
 Then, by defining $$r(u)=\arsT{u}$$ and 
 $$\gamma=\frac{4}{\pi^2\nua}\gC\left(\nh{\vt}^2+\nh{\wt}^2+\nh{v}^2+\nh{w}^2
	 +\nH{v}^2+\nH{w}^2\right),$$
 by \eqref{Poincare} we can rewrite \eqref{T2 before log} as:
 \begin{multline*}
 \half\dt(\nh\eta^2+\nh\zeta^2)
 	+ \frac{\alpha-\beta}{16}\left(\nH\eta^2+\nH\zeta^2\right)\\
 	+ \left[\half[\mu]+
	  \frac{\pi^2\nua}{4}(r(\eta)-\gamma\ag[1+]{r(\eta)})\right]
	  	\nh{\eta_1}^2\\
 	+ \left[\half[\mu]+
	  \frac{\pi^2\nua}{4}(r(\zeta)-\gamma \ag[1+]{r(\zeta)})\right]
	  	\nh{\zeta_1}^2 \leq 0.
 \end{multline*}
 Now we apply Lemma \ref{lemma log min} and conclude that
 \begin{multline*}
 \half\dt(\nh\eta^2+\nh\zeta^2)
  	+ \frac{\alpha-\beta}{16}\left(\nH\eta^2+\nH\zeta^2\right)\\
  	+ \left[\half[\mu] -\frac{\pi^2\nua}{4}\gamma\ln(\gamma)\right]\nh{\eta_1}^2
  	+ \left[\half[\mu] -\frac{\pi^2\nua}{4}\gamma\ln(\gamma)\right]\nh{\zeta_1}^2\leq0.
 \end{multline*}
 Using \eqref{Poincare} again, we have 
 $$\nH\eta^2+\nH\zeta^2 \geq 4\pi^2(\nh\eta^2+\nh\zeta^2),$$
 so by defining 
 $$Y=\nh{\eta}^2+\nh{\zeta}^2,$$ 
 and 
 $$ \psi=\min\left\{\frac{\pi^2(\alpha-\beta)}{2},\:
 	\mu - \frac{\pi^2(\alpha-\beta)}{2}\gamma\ln(\gamma)\right\} $$
 we obtain:
 \begin{equation}
  \dt Y + \psi Y \leq 0.
 \end{equation}
 Thus, as long as we choose 
 $\mu > \frac{\pi^2(\alpha-\beta)}{2}(1+\gamma\ln(\gamma))$,
 we conclude by Gronwall's inequality that
 $$ 
 Y(t) \leq Y(0)e^{-\pi^2(\alpha-\beta)t/2},
 \quad \forall t\in[0,T_0).
 $$
 
 By \eqref{fin exist time sup}, \eqref{sup bound}, and \eqref{sup lap bound},
 $$ 
 \gamma \leq \frac{4}{\pi^2(\alpha-\beta)}\gamma_0\left( \cupb[60] G^2 e^{\C G^4} +
 \cupT G^2\left[1+\left(1+G^2e^{\C G^4}\right)
 	 \left(1+e^{\C G^4}+G^4e^{\C G^4}\right)\right] \right)<\infty,
 $$
 so on the time interval $[0,T_0)$, such a $\mu$ is available.
 Specifically, it is sufficient to choose
 \begin{equation}
 \mu \geq 2000(\cB+\cT)^2(20\pi^2+\cM)G^2(1+G^2)^3 e^{2\C G^4}
 \left(\tilde{c} + \ln(1+G) + \C G^4\right),
 \end{equation}
 where $\tilde{c}:= \ln(250(\cB+\cT)^2(20\pi^2+\cM))/8$, so
 \begin{equation}
 \mu\sim G^{12}e^{2\C G^4}.\label{T2 mu bound}
 \end{equation}
 
 Therefore, for all $t\in[0,T_0)$, we obtain
 $$ Y(t)\leq Y(0)\leq 
 	2\nh{v_0}^2 + 2\nh{\vt_0}^2 + 2\nh{w_0}^2 + 2\nh{\wt_0}^2
 	\leq \cupb[20] G^2 e^{\C G^4}.
 $$
 This implies that, in fact, 
 $$
 \sup_{t\in[0,T_0)}(\nh{\vt(t)}^2+\nh{\wt(t)}^2)\leq \cupb[40] G^2 e^{\C G^4},
 $$
 which is a contradiction to \eqref{fin exist time sup}.
 
 Hence we have $T_0=\infty,$ and $(\vt(t),\wt(t))$ converges exponentially
 in time to $(v(t),w(t))$ in the $\sH$ norm, and we have established the estimate:
  $$
  \sup_{t\in[0,\infty)}(\nh{\vt(t)}^2+\nh{\wt(t)}^2) \leq \cupb[50] G^2 e^{\C G^4}.
  $$
 
 Also, our restriction on $\mu$ \eqref{T2 mu bound} is in fact sufficient to guarantee convergence on $[0,\infty)$, with our restriction \eqref{h mu restriction} on $h$, which we see now means we can choose
 $$ h\sim G^{-6}e^{-\C G^4}. $$
 
 \end{proof}

\subsection{Determining Interpolants}

 \begin{proof}[{\bf Proof of Theorem~\ref{thm gen. DA}}]~\\
 The proof proceeds exactly as that of Theorem~\ref{thm all},
 where $\delta^{(1)}\equiv\delta^{(2)}
 \equiv\epsilon^{(1)}\equiv\epsilon^{(2)}\equiv 0$, 
 with a few differences.
 As before, we let $\eta=v-\vt$ and then we obtain a differential inequality for
 $\nl{\eta}$. We get the same inequality as before but with two extra terms.
 
 After subtracting the equations for $v$ and $\vt$, we have
 $f-(f+\delta^{(1)})=-\delta^{(1)}$ for the forcing term, 
 and after taking the inner product with $\eta$ we have 
	$$ 
	 \a{\Ip{\delta^{(1)}}{\eta}} \leq \nl{\delta^{(1)}}\nl{\eta} 
	 \leq \frac{1}{\mu}\nl{\delta^{(1)}}^2+\frac{\mu}{4} \nl{\eta}^2. 
	$$
 Also, we have $\mu\oI\left(u+\epsilon^{(1)}-\ut\right) = \mu\oI\left(u-\ut\right)
 	+\mu\oI\left(\epsilon^{(1)}\right)$,
 and after taking the inner product with $\eta$, we obtain 
 $$
	 \a{\mu\Ip {\oI(\epsilon^{(1)})} \eta} \leq \mu\nl{\oI(\epsilon^{(1)})}\nl \eta
	 \leq \mu\nl{\oI(\epsilon^{(1)})}^2 + \frac{\mu}{4} \nl{\eta}^2. 
 $$
 We have similar additions for the inequality we derive for 
 $\zeta:=w-\wt$.
 
 Thus, letting $Y(t)=\nl{\eta(t)}^2+\nl{\zeta(t)}^2$ and proceeding 
 as before, we eventually get:
 $$ \dt Y + \psi Y \leq \phi, $$
 where 
	 $$ \psi(t):= \frac{\mu}{2} - 
	 \left(\frac{\cL^4+(\alpha-\beta)^4}{2(\alpha-\beta)^3}\right)
	 \left(\nh v^2 + \nh w^2\right),
	 $$
	 and
	 $$ \phi(t):= \frac{1}{\mu}\left(\nl{\delta^{(1)}}^2 
	     +\nl{\delta^{(2)}}^2\right) 
	    +\mu\left(\nl{\oI(\epsilon^{(1)})}^2
	     +\nl{\oI(\epsilon^{(2)})}^2\right).$$
	 Since $\nl{\delta{(1)}},\nl{\delta{(2)}}\to0$ and
	 $\nl{\oI(\epsilon^{(1)})},\nl{\oI(\epsilon^{(2)})}\to 0$,
	 we have $\nl{\phi}\to0$. 
 Therefore, by Proposition \ref{gronwall}, $\nl{v-\vt},\nl{w-\wt}\to 0$
 as $t\to\infty$.
 \end{proof}
 
 
 \begin{proof}[{\bf Proof of Theorem~\ref{thm det interp}}]~\\
 Let $\mu=\frac{\nua}{\cI^2h^2}$. Then $h$, $\oI$, and $\mu$ satisfy 
 Theorem~\ref{thm all} with $(v^{(1)},w^{(1)})$ as the reference solution.\\
 Let $(\vt,\wt)$ be the corresponding solution.\\
 Then $\nl{v^{(1)}(t)-\vt(t)}\to 0$ and $\nl{w^{(1)}(t)-\wt(t)}\to 0$, 
 and for some $\PT$, $\vt$ and $\wt$ satisfy the following equations:
 \begin{align*}
 \pt \vt - \alpha \ol \vt + \beta \ol \wt + \ob \wt \vt + \og \PT &= 
 f^{(1)} + \mu\oI\left(v^{(1)}-\vt\right)&\\
  &= f^{(2)} + (f^{(1)}-f^{(2)}) +
  \mu\oI\left(v^{(2)}+(v^{(1)}-v^{(2)})-\vt\right),&\\
 \pt \wt - \alpha \ol \wt + \beta \ol \vt + \ob \vt \wt + \og \PT &= 
 g^{(1)} + \mu\oI\left(w^{(1)}-\wt\right)&\\
  &= g^{(2)} + (g^{(1)}-g^{(2)}) +
  \mu\oI\left(w^{(2)}+(w^{(1)}-w^{(2)})-\wt\right).&
 \end{align*}
 Therefore, setting $\delta^{(1)}:=f^{(1)}-f^{(2)}$ and
 $\delta^{(2)}:=g^{(1)}-g^{(2)}$, and $\epsilon^{(1)}:=v^{(1)}-v^{(2)}$
 and $\epsilon^{(2)}:=w^{(1)}-w^{(2)}$, 
 we see that $(\vt,\wt)$ 
 must be the unique solution guaranteed by Theorem \ref{thm gen. DA}, 
 with $(v^{(2)},w^{(2)})$ as the reference solution.\\
 Therefore $\nl{v^{(2)}(t)-\vt(t)}\to 0$ and $\nl{w^{(2)}(t)-\wt(t)}\to 0$.\\
 Thus, 
 \begin{align*}
  \nl{v^{(1)}(t)-v^{(2)}(t)} \leq 
  \nl{v^{(1)}(t)-\vt(t)} + \nl{\vt(t)-v^{(2)}(t)} \to 0,
 \end{align*}
 and
 \begin{align*}
  \nl{w^{(1)}(t)-w^{(2)}(t)} \leq 
  \nl{w^{(1)}(t)-\wt(t)} + \nl{\wt(t)-w^{(2)}(t)} \to 0.&\\
 \end{align*}
 \end{proof}

\section{Concluding Remarks}
 We have shown that, in the language of the reformulated 
 equations, solutions $(\vt,\wt)$ of the data assimilation
 equations will converge to the corresponding true values
 $(v,w)$ in $\sL$, even if measurements are only taken 
 for only one of $v$ and $w$. This equates to having to 
 take measurements on either $u+b$ or $u-b$. Could one
 prove that it is sufficient to collect data on just $u$ or
 just $b$ and still get convergence, similar to the 
 result for the reformulated variables?
 
 If one were to consider collecting data only on
 the magnetic field, $b$, then the problem is evident when 
 we take $b(t)\equiv\tilde{b}(t)\equiv g\equiv 0$ 
 for all $t\geq0$, because we then have $u$ and $\tilde{u}$
 satisfying the Navier-Stokes equations with 
 different initial conditions and no data assimilation.
 Hence, there is an asymmetry between the original system and the reformulated system.
 
 The answer to the question for collecting data on the velocity field, $u$, is open. However, since we've demonstrated that the algorithm works with knowledge of only the sum of measurements on $u$ and $b$, it may be that the knowledge of the velocity field is what makes this work, and so a $u$-measurement only algorithm is hopeful. However, since it seems we shouldn't be able to prove the convergence of a $b$-measurement only algorithm, and the Els\"asser variable formulation does not distinguish $u$ and $b$, a proof of a $u$-measurement only algorithm would have to be in terms of the original variables.

\section{Appendix}
\begin{proof}[{\bf Proof of Proposition~\ref{prop upper bounds} }]
We provide only a formal proof of \eqref{int bound} here.  A rigorous proof can be carried out by, e.g., first proving the bounds at the level of finite-dimensional Galerkin truncation, and then passing to a limit. 

Taking a (formal) inner-product of \eqref{MHD_v} with $v$, and of \eqref{MHD_w} with $w$, using \eqref{MHD_vw_div} and adding the results, we obtain
\begin{align*}
&\qquad
\frac{1}{2}\frac{d}{dt}\left(\nl{v}^2 + \nl{w}^2\right)
+ (\alpha-\beta)\left(\nh{v}^2 + \nh{w}^2\right)
\\&\leq \ip f v + \ip g w 
\leq 
\nl{f}\nl{v}+\nl{g}\nl{w}
\\&\leq  
\tfrac{1}{8\pi^2(\alpha-\beta)}\left(\nl{f}^2 + \nl{g}^2\right)
+\tfrac{(\alpha-\beta)}{2}4\pi^2\left(\nl{v}^2 + \nl{w}^2\right)
\\&\leq  
\tfrac{1}{8\pi^2(\alpha-\beta)}\left(\nl{f}^2 + \nl{g}^2\right)
+\tfrac{(\alpha-\beta)}{2}\left(\nh{v}^2 + \nh{w}^2\right),
\end{align*}
where we used the Poincar\'e inequality and Young's inequality.  Therefore, after collecting terms,
\begin{align}\label{L2_vw1}
\frac{d}{dt}\left(\nl{v}^2 + \nl{w}^2\right)
+ (\alpha-\beta)\left(\nh{v}^2 + \nh{w}^2\right)\leq  
\tfrac{1}{4\pi^2(\alpha-\beta)}\left(\nl{f}^2 + \nl{g}^2\right),
\end{align}
and by using the Poincar\'e inequality on the left hand side,
\begin{align}\label{L2_vw}
\frac{d}{dt}\left(\nl{v}^2 + \nl{w}^2\right)
+ 4\pi^2(\alpha-\beta)\left(\nl{v}^2 + \nl{w}^2\right)\leq  
\tfrac{1}{4\pi^2(\alpha-\beta)}\left(\nl{f}^2 + \nl{g}^2\right).
\end{align}
Then by Gr\"onwall's inequality,
\begin{align}\label{L2_ptwise_bound}
\nl{v(t)}^2 + \nl{w(t)}^2
\leq 
(\nl{v(0)}^2 + \nl{w(0)}^2)e^{-4\pi^2(\alpha-\beta) t}
+ 
\tfrac{1}{16\pi^4(\alpha-\beta)^2} \esssup_{s\in[0,t]}
\left(\nl{f(s)}^2 + \nl{g(s)}^2\right).
\end{align}

Let $t_*>0$ be large enough so that 
\begin{align}\label{L2_ess_sup}
\esssup_{t\geq t_*}\left(\nl{f(t)}^2 + \nl{g(t)}^2\right)
\leq 2 \limsup_{t\to\infty}\left(\nl{f(t)}^2 + \nl{g(t)}^2\right),
\end{align}
and choose $t_0>t_*$ so that
$$ (\nl{v(t_*)}^2 + \nl{w(t_*)}^2)e^{-4\pi^2(\alpha-\beta) (t_0-t_*)} \leq 
\tfrac{3}{8\pi^4(\alpha-\beta)^2} \limsup_{t\to\infty} \left(\nl{f(t)}^2 + \nl{g(t)}^2\right).$$
Then by using Gr\"onwall's inequality again on \eqref{L2_vw} with initial time $t_*$,
we see that for all $t\geq t_0$,
\begin{align}
\nl{v(t)}^2 + \nl{w(t)}^2
&\leq 
(\nl{v(t_*)}^2 + \nl{w(t_*)}^2)e^{-4\pi^2(\alpha-\beta) (t-t_*)} + 
\tfrac{1}{16\pi^4(\alpha-\beta)^2} \esssup_{s\in[t_*,t]}
\left(\nl{f(s)}^2 + \nl{g(s)}^2\right)\nonumber \\
&\leq \tfrac{1}{2\pi^4(\alpha-\beta)^2} \limsup_{s\to\infty}\left(\nl{f(s)}^2 + \nl{g(s)}^2\right).\label{L2_vw_bound}
\end{align}

Next, integrating \eqref{L2_vw1} on $[t,t+T]$, and using \eqref{L2_ess_sup}, 
\begin{align*}
&\quad
\nl{v(t+T)}^2 + \nl{w(t+T)}^2
+ (\alpha-\beta)\int_t^{t+T}\left(\nh{v(s)}^2 + \nh{w(s)}^2\right)\,ds
\\&\leq  
\nl{v(t)}^2 + \nl{w(t)}^2
+
\tfrac{T}{2\pi^2(\alpha-\beta)}\limsup_{s\to\infty}\left(\nl{f(s)}^2 + \nl{g(s)}^2\right).
\end{align*}
Thus, using \eqref{L2_vw_bound}, for $t\geq t_0$, 
\begin{align}
\int_t^{t+T}\left(\nh{v(s)}^2 + \nh{w(s)}^2\right)\,ds
\leq
( 1 + \pi^2(\alpha-\beta)T )(\alpha-\beta)
\limsup_{s\to\infty}\frac{\nl{f(s)}^2 + \nl{g(s)}^2}{2\pi^4(\alpha-\beta)^4},
\end{align}
which implies \eqref{int bound}.
\end{proof}

\begin{proof}[{\bf Proof of Lemma~\ref{lemma nonlinear term est}}]
To show \eqref{nonlin est 2}, we first apply \eqref{Holder} and \eqref{Young} 
then \eqref{Lady} and \eqref{Young}:
\begin{align*}
 \a{\Ib u v w}&\leq\I{\a u \a{\og v} \a w}\leq \nh{v} \nL{u}\nL{w}&\\
 &\leq \frac{\delta}{2}\nh{v} \nL{u}^2 + \frac{1}{2\delta}\nh{v} \nL{w}^2&\\
 &\leq \frac{\cL\delta}{2}\nh{v}\nl{u}\nh{u} 
	+ \frac{\cL}{2\delta}\nh{v}\nl{w}\nh{w}.&\\
 &\leq \frac{\cL\delta}{2}\left(\half\nh{v}^2\nl{u}^2 + \half\nh{u}^2\right)
	+ \half\frac{\cL^2}{4\epsilon\delta^2}\nh{v}^2\nl{w}^2 
	+ \half[\epsilon]\nh{w}^2.
\end{align*}
We obtain \eqref{nonlin est 2.1} by switching the roles of $u$ and $w$ after 
applying \eqref{Holder}.\\
The proof of \eqref{nonlin est 1} requires us to estimate the components of the 
product differently. First, write
$$
 \a{\Ib u v w} = \a{\I{\sum_{i,j=1}^2 u_i\pxi v_j w_j}} \leq \sum_{i,j=1}^2
	\a{\I{u_i\pxi v_j w_j}},
$$
and then we estimate the terms of the sum separately.\\
(Case: $i=1,j=1$) For this case we proceed similarly as in the proof of 
\eqref{nonlin est 2}, to obtain:
\begin{align*}
 \a{\I{u_1\pn1 v_1 w_1}} & \leq \nh{v_1}\nL{u_1}\nL{w_1} 
 \\&\leq \half[\cL]\nh{v_1}\nl{u_1}\nh{u_1} 
	+ \half[\cL]\nh{v_1}\nl{w_1}\nh{w_1} &\\ 
 &\leq \frac{\cL\delta}{4}\nh{u_1}^2 
	+ \frac{\cL}{4\delta}\nh{v_1}^2\nl{u_1}^2
     + \frac{\cL\delta}{4}\nh{w_1}^2 
     + \frac{\cL}{4\delta}\nh{v_1}^2\nl{w_1}^2.
\end{align*}
(Case: $i=1,j=2$) For this and the next case, we use \eqref{Brezis}:
\begin{align*}
 \a{\I{u_1\pn1 v_2 w_2}} & \leq \cB\nh{w_2}\nh{v_2}\nl{u_1} 
 \left(1 + \ln\left(\ar u\right)\right)^{1/2} &\\
 &\leq \frac{\cB\delta}{2}\nh{w_2}^2 
	+ \frac{\cB}{2\delta}\nh{v_2}^2\nl{u_1}^2 
		\left(1 + \ln\left(\ar u\right)\right)\\
 &\leq \frac{\cB\delta}{2}\nh{w_2}^2 + \frac{\cB}{2\delta}\nh{v}^2\nl{u_1}^2 
		\left(1 + \ln\left(\ar u\right)\right)
\end{align*}
(Case: $i=2,j=1$) Similarly, we obtain:
\begin{align*}
 \a{\I{u_2\pn2 v_1 w_1}}
 &\leq \frac{\cB\delta}{2}\nh{u_2}^2 + \frac{\cB}{2\delta}\nh{v}^2\nl{w_1}^2 
  \left(1 + \ln\left(\ar w\right)\right) &
\end{align*}
(Case: $i=2,j=2$) Now we use the divergence free conditions 
(i.e. $\pn1 u_1 = -\pn2 u_2$) and integrate by parts in order to obtain integrals 
in which the second components of $u$ and $w$ do not appear together:
$$
 \I{u_2\pn2 v_2 w_2} = -\I{\pn2u_2 v_2 w_2} - \I{u_2 v_2 \pn2w_2}
 = \I{\pn1u_1 v_2 w_2} + \I{u_2 v_2 \pn1w_1} 
$$
$$
 = -\I{u_1 \pn1v_2 w_2} - \I{u_1 v_2 \pn1w_2} 
	- \I{\pn1u_2 v_2 w_1} - \I{u_2 \pn1v_2 w_1}.
$$
Now, each of these terms can be estimated similarly to the cases where 
$i\neq j:$
\begin{align*}
 \a{\I{u_1 \pn1v_2 w_2}}
  &\leq \frac{\cB\delta}{2}\nh{w_2}^2 + \frac{\cB}{2\delta}\nh{v}^2\nl{u_1}^2 
  \left(1 + \ln\left(\ar u\right)\right) &\\
 \a{\I{u_1 v_2 \pn1w_2}}
  &\leq \frac{\cB\delta}{2}\nh{w_2}^2 + \frac{\cB}{2\delta}\nh{v}^2\nl{u_1}^2 
  \left(1 + \ln\left(\ar u\right)\right) &\\
 \a{\I{\pn1u_2 v_2 w_1}}
  &\leq \frac{\cB\delta}{2}\nh{u_2}^2 + \frac{\cB}{2\delta}\nh{v}^2\nl{w_1}^2 
  \left(1 + \ln\left(\ar w\right)\right) &\\
 \a{\I{u_2 \pn1v_2 w_1}}
  &\leq \frac{\cB\delta}{2}\nh{u_2}^2 + \frac{\cB}{2\delta}\nh{v}^2\nl{w_1}^2 
  \left(1 + \ln\left(\ar w\right)\right) &
\end{align*}
Taking the sum of these 7 inequalities obtained from the 4 cases, we have:
\begin{multline*}
 \a{\Ib u v w} \leq \frac{\cL\delta}{4}\nh{u_1}^2 
	+ \frac{3\cB\delta}{2}\nh{u_2}^2
	+\frac{\cL\delta}{4}\nh{w_1}^2 + \frac{3\cB\delta}{2}\nh{w_2}^2\\
	+\frac{\cL}{4\delta}\nh{v}^2\nl{u_1}^2 
		+ \frac{\cL}{4\delta}\nh{v}^2\nl{w_1}^2\\
	+\frac{3\cB}{2\delta}\nh{v}^2\nl{u_1}^2
		\left(1 + \ln\left(\ar u\right)\right)
	+\frac{3\cB}{2\delta}\nh{v}^2\nl{w_1}^2
		\left(1 + \ln\left(\ar w\right)\right).
\end{multline*}
Setting 
$c = \max\{\frac{\cL}{4},\frac{3\cB}{2}\}$ now yields \eqref{nonlin est 1}.
\end{proof}

\begin{proof}[{\bf Proof of Lemma~\ref{lemma nonlinear term est 2}}]
We start by writing 
 \begin{align*}
  \Ib u v {\ol w} = &\Ibb u 1 {v_1} {\ol w_1} + \Ibb u 2 {v_1} {\ol w_1}\\
  				  + &\Ibb u 1 {v_2} {\ol w_2} + \Ibb u 2 {v_2} {\ol w_2}.
 \end{align*}
 Now we'll estimate each term individually.\\
 By \eqref{Titi-Brezis} we have:
 \begin{align}\label{T2 lem est a1}
  \a{ \Ibb{u}{1}{v_1}{\ol w_1} } &\leq 
   \cT\nh{u_1}\nh{v_1}\nH{w_1}\lnTwo{u} \nonumber\\
    &\leq \cT\nh{u_1}\nh{v}\nH{w}\lnTwo{u},
 \end{align}
 and
 \begin{align}\label{T2 lem est a2}
  \a{ \Ibb {u} {1} {v_2} {\ol w_2} } &\leq 
   \cT\nh{u_1}\nh{v_2}\nH{w_2}\lnTwo{u} \nonumber\\
    &\leq \cT\nh{u_1}\nh{v}\nH{w}\lnTwo{u}.
 \end{align}
 Using integration by parts and the divergence free condition, 
 we have:
 \begin{align*}
  \Ibb {u} {2} {v_1} {\ol w_1} &= -\I{\px u_2 \py v_1 \px w_1} 
   - \I{\py u_2 \py v_1 \py w_1}\\
   &\quad+\I{u_2 \pn{yy} v_2 \px w_1} - \I{u_2 \pn{yy} v_1 \py w_1},
 \end{align*}
 so applying \eqref{Brezis} to the first two integrals and
 \eqref{Titi-Brezis} to the second two, we obtain:
 \begin{multline}\label{T2 lem est a3}
  \a{ \Ibb{u}{2}{v_1}{\ol w_1} }\leq
   \cB\nH{u}\nh{v}\nh{w_1}\lnTwo{w}\\
  +\cT\nh{u}\nh{w_1}\nH{v}\lnTwo{w}.
 \end{multline}
 Again by integrating by parts and using the divergence free condition,
 we obtain
 \begin{align*} 
 	\Ibb {u} {2} {v_2} {\ol w_2} &= \I{\px u_1 v_2 {\ol w_2}}\\
 	 &\quad+ \I{{\ol u_2} v_2 \px w_1} + \I{u_2 {\ol v_2} \px w_1}\\
 	 &\quad+\: 2\I{\px u_2 \px v_2 \px w_1}\: +\: 2\I{\py u_2 \py v_2 \px w_1}.
 \end{align*}
 Now, estimating with \eqref{Brezis} and \eqref{Titi-Brezis} we have:
 \begin{multline}\label{T2 lem est a4}
   \a{ \Ibb {u} {2} {v_2} {\ol w_2} }\leq
    \cT\nh{u_1}\nh{v}\nH{w}\lnTwo{u}\\
   +\cT\nH{u}\nh{v}\nh{w_1}\lnTwo{w}\\
   +\cT\nh{u}\nH{v}\nh{w_1}\lnTwo{w}\\
   +4\cB\nH{u}\nh{v}\nh{w_1}\lnTwo{w}.
  \end{multline}
 Combining \eqref{T2 lem est a1}, \eqref{T2 lem est a2}, \eqref{T2 lem est a3}, 
 and \eqref{T2 lem est a4}, we obtain:
  \begin{multline*}
 	\a{ \Ib u v {\ol w} } \leq 3\cT\nh{u_1}\nh{v}\nH{w}\lnTwo{u}\\
 		+(\cT+4\cB)\nH{u}\nh{v}\nh{w_1}\lnTwo{w}\\
	 	+ 2\cT\nh{u}\nH{v}\nh{w_1}\lnTwo{w},
  \end{multline*}
 so (a) is proven.
        
 In order to prove (b), we first write 
 \begin{align*}
   \Ib u v {\ol v} = &\Ibb u 1 {v_1} {\ol v_1} + \Ibb u 2 {v_1} {\ol v_1}\\
 				   + &\Ibb u 1 {v_2} {\ol v_2} + \Ibb u 2 {v_2} {\ol v_2}.
 \end{align*}
 Similar to the proof of (a), we proceed to estimate each term individually
 by appealing to \eqref{Brezis} or \eqref{Titi-Brezis}, by integrating by
 parts and using the divergence free conditions.\\
 By applying \eqref{Titi-Brezis}, we have:
 \begin{align}\label{T2 lem est b1}
 	\a{ \Ibb u 1 {v_1} {\ol v_1} } \leq \cT\nh{u}\nh{v_1}\nH{v}\lnTwo{v},
 \end{align}
 and
 \begin{align}\label{T2 lem est b2}
	\a{ \Ibb u 2 {v_1} {\ol v_1} } \leq \cT\nh{u}\nh{v_1}\nH{v}\lnTwo{v},
 \end{align}
 and using the divergence free condition, we obtain
 \begin{align}\label{T2 lem est b4}
	\a{ \Ibb u 2 {v_2} {\ol v_2} } &= \a{ -\I{ u_2\px{v_1}{\ol v_2} } }\nonumber\\
	 &\leq \cT\nh{u}\nh{v_1}\nH{v}\lnTwo{v}.
 \end{align}
 To estimate the remaining integral, we write:
 \begin{align*}
	\Ibb u 1 {v_2} {\ol v_2} &= \Ibb u 1 {v_2} {\pn{xx} v_2} + \Ibb u 1 {v_2} {\pn{yy} v_2}.
 \end{align*}
 Now,
 \begin{align*}
 	\Ibb u 1 {v_2} {\pn{yy} v_2} &= -\Ibb u 1 {v_2} {\py\px v_1}\\
 	&= \I{ \px u_1 \px v_2 \py v_1 } + \I{ u_1 \pn{xx} v_2 \py v_1 },
 \end{align*}
 so 
 \begin{subequations}
 \begin{equation}\label{T2 lem est b31}
 	\a{ \Ibb u 1 {v_2} {\pn{yy} v_2} } \leq (\cB+\cT)\nh{u}\nH{v}\nh{v_1}\lnTwo{v}.
 \end{equation}
 For the other term, we have
 \begin{equation*}
 	\Ibb u 1 {v_2} {\pn{xx} v_2} = -\I{ \px u_1 \px v_2 \px v_2 } -\I{ u_1 \pn{xx}v_2 \px v_2 },
 \end{equation*}
 so,
 \begin{equation*}
 	\Ibb u 1 {v_2} {\pn{xx} v_2} = -\half\I{ \px u_1 \px v_2 \px v_2 }.
 \end{equation*}
 Next, 
 \begin{align*}
 	-\half\I{ \px u_1 \px v_2 \px v_2 } = \half\I{ \py u_2 \px v_2 \px v_2 }\\
 	= -\I{ u_2 \px\py v_2 \px v_2 } = \I{ u_2 \pn{xx}v_1 \px v_2 }\\ 
 	= -\I{ \px u_2 \px v_1 \px v_2 } -\I{ u_2 \px v_1 \pn{xx} v_2 }.
 \end{align*}
 Therefore,
 \begin{equation}\label{T2 lem est b32}
  	\a{ \Ibb u 1 {v_2} {\pn{xx} v_2} } 
  	\leq (\cB+\cT)\nh{u}\nh{v_1}\nH{v}\lnTwo{v}.
 \end{equation}
 \end{subequations} 
 
 Hence, by combining \eqref{T2 lem est b1}, \eqref{T2 lem est b2}, \eqref{T2 lem est b4}, 
 \eqref{T2 lem est b31}, and \eqref{T2 lem est b32}, we obtain:
 \begin{equation*}
	\a{ \Ib u v {\ol v} } \leq (2\cB+5\cT)\nh{u}\nh{v_1}\nH{v}\lnTwo{v},
 \end{equation*}
 as claimed.

\end{proof}

\bibliographystyle{alpha}
\bibliography{References}
\end{document}